\numberwithin{equation}{section} 
\newcommand\beq{\begin{equation}}
\newcommand\eeq{\end{equation}}
\renewcommand{\emph}{\textbf}
\newcommand{\brk}[1]{\left(#1\right)}          
\newcommand{\Brk}[1]{\left[#1\right]}          
\newcommand{\BRK}[1]{\left\{#1\right\}}        
\newcommand{\Abs}[1]{\left| #1 \right|}        
\newcommand{\Scal}[2]{\left(#1,#2\right)}      
\newcommand{\Norm}[1]{\left\| #1 \right\|}     
\newcommand{\jump}[1]{[\![#1]\!]}
\newcommand{\xx}{\boldsymbol{x}}
\newcommand{\bolda}{\boldsymbol{a}}
\newcommand{\boldb}{\boldsymbol{b}}
\newcommand{\e}{\varepsilon}
\newcommand{\D}{\mathcal{D}}
\newcommand{\f}{\boldsymbol{f}}
\newcommand{\I}{\boldsymbol{I}}
\renewcommand{\to}{\rightarrow}
\newcommand{\str}{{\boldsymbol{\tau}}}
\newcommand{\strs}{{\boldsymbol{\sigma}}}
\newcommand{\bzero}{{\boldsymbol{0}}}
\newcommand{\bu}{{\boldsymbol{u}}}
\newcommand{\bv}{\boldsymbol{v}}
\newcommand{\bw}{\boldsymbol{w}}
\newcommand{\bz}{\boldsymbol{z}}
\newcommand{\be}{\boldsymbol{e}}
\newcommand{\gbu}{{\grad\bu}}
\newcommand{\gbv}{{\grad\bv}}
\newcommand{\bphi}{\boldsymbol{\phi}}
\newcommand{\bpsi}{\boldsymbol{\psi}}
\newcommand{\bchi}{\boldsymbol{\chi}}
\newcommand{\bxi}{\boldsymbol{\xi}}
\newcommand{\bvarsigma}{\boldsymbol{\varsigma}}
\newcommand{\buh}{\bu_h}
\newcommand{\gbuh}{\grad\bu_h}
\newcommand{\strh}{\strs_h}
\newcommand{\prd}{p_{\delta}}
\newcommand{\paorL}{p_{\alpha}^{(L)}}
\newcommand{\padorL}{p_{\alpha,\delta}^{(L)}}
\newcommand{\bud}{\bu_\delta}
\newcommand{\gbud}{\grad\bu_\delta}
\newcommand{\strd}{\strs_\delta}
\newcommand{\bua}{\bu_\alpha}
\newcommand{\stra}{\strs_\alpha}
\newcommand{\buaL}{\bu_{\alpha}^{L}}
\newcommand{\buaorL}{\bu_{\alpha}^{(L)}}
\newcommand{\buadorL}{\bu_{\alpha,\delta}^{(L)}}
\newcommand{\gbuaorL}{\grad\bu_{\alpha}^{(L)}}
\newcommand{\gbuadorL}{\grad\bu_{\alpha,\delta}^{(L)}}
\newcommand{\straL}{\strs_{\alpha}^{L}}
\newcommand{\straorL}{\strs_{\alpha}^{(L)}}
\newcommand{\stradorL}{\strs_{\alpha,\delta}^{(L)}}
\newcommand{\buhd}{\bu_{\delta,h}}
\newcommand{\gbuhd}{\grad\bu_{\delta,h}}
\newcommand{\strhd}{\strs_{\delta,h}}
\newcommand{\stradh}{\strs_{\alpha,\delta,h}}
\newcommand{\buhdLa}{\bu_{\alpha,\delta,h}}
\newcommand{\gbuhdLa}{\grad\bu_{\alpha,\delta,h}}
\newcommand{\strhdLa}{\strs_{\alpha,\delta,h}}
\newcommand{\buhLa}{\bu_{\alpha,h}}
\newcommand{\gbuhLa}{\grad\bu_{\alpha,h}}
\newcommand{\strhLa}{\strs_{\alpha,h}}
\newcommand{\dt}{\Delta t}
\newcommand{\Wi}{{\text{Wi}}}
\renewcommand{\Re}{{\text{Re}}}
\renewcommand{\div}{\operatorname{div}}
\newcommand{\tr}{\operatorname{tr}}
\newcommand{\grad}{\boldsymbol{\nabla}}
\newcommand{\R}{\mathbb{R}}
\newcommand{\RS}{\mathbb{R}^{d \times d}_S}
\newcommand{\RSPD}{\mathbb{R}^{d \times d}_{SPD}}
\newcommand{\PP}{\mathbb{P}}
\newcommand{\pd}[2]{\frac{\partial#1}{\partial#2}}
\newcommand{\deriv}[2]{\frac{d#1}{d#2}}
\newcommand{\intd}{\int_\D}
\def\Gd{G_\delta}
\def\GdL{{G_{\delta}^L}}
\def\GdorL{{G_{\delta}^{(L)}}}
\def\GorL{{G^{(L)}}}
\def\HdL{{H_{\delta}^L}}
\def\HdorL{{H_{\delta}^{(L)}}}
\def\HorL{{H^{(L)}}}
\def\BL{\beta^L}
\def\Bd{\beta_\delta}
\def\BdL{{\beta_{\delta}^L}}
\def\BdorL{{\beta_{\delta}^{(L)}}}
\def\BorL{{\beta^{(L)}}}
\def\Fd{F_{\delta}}
\def\FdorL{F_{\delta}^{(L)}}
\def\FdorLh{F_{\delta,h}^{(L)}}
\def\ForLh{F_{h}^{(L)}}
\def\bn{\boldsymbol n}
\def\U{\mathrm{W}}  
\def\Uz{\mathrm{V}}
\def\S{\mathrm{S}}
\newcommand{\SPD}{\S_{PD}}
\def\Uh{\mathrm{W}_h}  
\def\Vhzero{\mathrm{V}_{h}^0}
\def\Vhone{\mathrm{V}_{h}^1}
\def\Sh{\mathrm{S}_h}
\def\Shone{\mathrm{S}_h^1}
\def\ShonePD{\mathrm{S}_{h,PD}^1}
\begin{document}
\markboth{John W.\ Barrett and S\'ebastien Boyaval}
{Existence and Approximation of a (Regularized) Oldroyd-B model}

\title{Existence and Approximation of a (Regularized) Oldroyd-B Model}
\author{John W. Barrett}
\address{Department of Mathematics, Imperial College London,\\ London SW7 2AZ, UK\\
jwb@ic.ac.uk}

\author{S\'ebastien Boyaval}
\address{CERMICS, Ecole Nationale des Ponts et Chauss\'ees 
(ParisTech/Universit\'e Paris-Est) \\ 
6 \& 8 avenue Blaise Pascal,
Cit\'e Descartes, 77455 Marne-la-Vall\'ee Cedex 2, France\\[2mm]       
sebastien.boyaval@cermics.enpc.fr\\[2mm]
Present address: Laboratoire d'hydraulique Saint-Venant,\\ Universit\'e Paris-Est
(Ecole des Ponts ParisTech) EDF R\&D,\\ 6 quai Watier, 78401 Chatou Cedex, France
}

\maketitle

\begin{abstract}
We consider the finite element approximation  
of the Oldroyd-B system of equations,
which models a dilute polymeric fluid,
in a bounded domain $\D \subset {\mathbb R}^d$,
$d=2$ or $3$,
subject to no flow boundary conditions.
Our schemes are based on approximating 
the pressure and the   
symmetric conformation tensor 
by either
(a) piecewise constants 
or (b) continuous piecewise linears.  
In case (a) the velocity field is approximated 
by continuous piecewise quadratics or a reduced version, 
where the tangential component on each simplicial edge ($d=2$) 
or face ($d=3$) is linear.
In case (b) the velocity field is approximated 
by continuous piecewise quadratics or the mini-element. 
We show that both of these types of schemes 
satisfy a {\it free energy} bound, which involves the logarithm of the
conformation tensor,  
without any constraint on the time step 
for the backward Euler type time discretization. 
This extends the results of 
 Boyaval {\it et al.}\cite{boyaval-lelievre-mangoubi-09}
on this free energy bound.
There a piecewise constant  
approximation of the conformation tensor
was necessary to treat the advection term in the stress equation,
and a restriction on the time step, based on the initial data, was required
to ensure that the approximation to the conformation 
tensor remained positive definite.
Furthermore, 
for
our approximation (b)
in the presence of an additional dissipative term in the stress equation
and a cut-off on the conformation tensor on certain terms in the system,
similar to those introduced in Barrett and S\"{u}li\cite{barrett-suli:2008} 
for the microscopic-macroscopic FENE model of a dilute polymeric fluid,
we show (subsequence) {\it convergence}, as the spatial and temporal discretization parameters
tend to zero, 
towards global-in-time weak solutions of this
{\it regularized} Oldroyd-B system.
Hence, we prove existence of global-in-time weak solutions 
to this regularized model.  
Moreover, in the case $d=2$ we carry out this convergence in the absence of cut-offs,
but with a time step restriction dependent on the spatial discretization parameter, 
and hence show existence of a global-in-time weak solution to the Oldroyd-B system  
with an additional dissipative term in the stress equation.  
\end{abstract}

\keywords{Oldroyd-B model, entropy, finite element method, convergence analysis, 
existence of weak solutions.}%

\ccode{AMS Subject Classification: 35Q30, 65M12, 65M60, 76A10, 76M10, 82D60}

\section{Introduction}
\label{sec1}
\subsection{The standard Oldroyd-B model}
We consider the Oldroyd-B model for a dilute polymeric fluid.  
The fluid,
confined to an open bounded domain $\D\subset\R^d$ ($d=2$ or $3$)
with a Lipschitz boundary $\partial \D$, 
is governed by the following non-dimensionalized system: 

\noindent
{\bf (P)} Find 
$\bu : (t,\xx)\in[0,T)\times\D \mapsto \bu(t,\xx)\in\R^d$,  
$p : (t,\xx)\in \D_T :=(0,T)\times\D \mapsto p(t,\xx)\in\R$ 
and $\strs : (t,\xx)\in[0,T)\times\D \mapsto\strs(t,\xx)\in \RS$ 
such that
\begin{subequations}
\begin{alignat}{2}
\label{eq:oldroyd-b-sigma}
\Re \brk{\pd{\bu}{t} + (\bu\cdot\grad)\bu} & =  -\grad p 
+ (1-\e)\Delta\bu +  \frac{\e}{\Wi}\div\strs + \f 
\qquad &&\mbox{on } \D_T 
\,, 
\\
\div\bu & = 0 
\qquad &&\mbox{on } \D_T 
\,, 
\label{eq:oldroyd-b-sigma1}
\\
\pd{\strs}{t}+(\bu\cdot\grad)\strs & = 
(\gbu)\strs+\strs(\gbu)^T-\frac{1}{\Wi}\brk{\strs-\I} 
\qquad &&\mbox{on } \D_T 
\,,
\label{eq:oldroyd-b-sigma2}
\\
\bu(0,\xx) &= \bu^0(\xx) 
\label{eq:initial}
&&\forall \xx \in \D\,,\\
\strs(0,\xx) &= \strs^0(\xx) 
&&\forall \xx \in \D\,,
\label{eq:initial1}
\\
\label{eq:dirichlet}
\bu&=\bzero &&\text{on $ (0,T) \times \partial\D$}
\,.
\end{alignat}
\end{subequations}
Here $\bu$ is the velocity of the fluid, 
$p$ is the hydrostatic pressure,
and $\strs$ 
 is the symmetric conformation tensor 
 of the polymer molecules linked to the symmetric 
 polymeric extra-stress tensor $\str$ 
 through the relation
  $\strs = \I + \frac{\Wi}{\e}\str,$
where $\I$ is the $d$-dimensional identity tensor
and $\RS$ denotes symmetric real $d \times d$ 
matrices.
In addition,
$\f : (t,\xx)\in 
\D_T \mapsto \f(t,\xx)\in\R^d$ is the
given density of body forces acting on the fluid;  
and the following given parameters are dimensionless:
the Reynolds number $\Re \in \R_{>0}$, the Weissenberg number $\Wi \in \R_{>0}$,  
and the elastic-to-viscous viscosity fraction $\e \in (0,1)$.
For the sake of simplicity,
we will limit ourselves to the no flow boundary condition
(\ref{eq:dirichlet}).
Finally, $\grad \bu (t,\xx) \in \R^{d \times d}$ with $[\grad \bu]_{ij}=
\frac{\partial \bu_i}{\partial \xx_j}$,
and $({\rm div}\, \strs)(t,\xx) \in \R^d$
 with $[{\rm div}\, \strs]_i = \sum_{j=1}^d 
\frac{\partial \strs_{ij}}{\partial \xx_j}$.

Unfortunately, at present there is no proof of existence of global-in-time weak solutions
to (P) available in the literature.  
Local-in-time existence results for (P) for sufficiently smooth initial data,
and global-in-time existence results for sufficiently small initial data  
can be found in  Guillop\'e and Saut\cite{guillope-saut:1990} for a Hilbert space framework,
and in Fern\'{a}ndez-Cara {\it et al.}\cite{CGO} 
for a more general Banach space framework. 
Global-in-time existence results for the corotational version of (P);
that is, where $\gbu$ in
(\ref{eq:oldroyd-b-sigma2}) is replaced by its anti-symmetric part
$\frac{1}{2}(\gbu - (\gbu)^T)$
can be found in Lions and Masmoudi.\cite{LM} 
We note that such a simple change to the model leads to a vast simplification 
mathematically, but, of course, it is not justified on physical grounds. 
Finally, global-in-time existence results for (P) in the case $\f \equiv \bzero$
and for initial data close to equilibrium can be found in 
Lei {\it et al.}\cite{lei-liu-zhou}.

This paper considers 
some finite element 
approximations of the Oldroyd-B system, possibly with some regularization. 
In the regularized case, we show (subsequence) convergence of the approximation,
as the spatial and temporal discretization parameters tend to zero,
and so establish  
the existence of global-in-time weak solutions  
of these regularized versions of the Oldroyd-B system. 
The first of these regularized problems is (P$_\alpha$) 
obtained by adding the dissipative 
term $\alpha\,\Delta \strs$ for a given $\alpha \in \R_{>0}$ 
to the right-hand side of (\ref{eq:oldroyd-b-sigma2}), 
as considered computationally in Sureshkumar and Beris,\cite{sureshkumar-beris}
with an additional no flux boundary condition for $\strs$
on $\partial \D$. 
The second is (P$_\alpha^L$) where, in addition to the 
regularization in (P$_\alpha$),
the conformation tensor $\strs$ is replaced by the cut-off $\BL(\strs)$
on the right-hand side of (\ref{eq:oldroyd-b-sigma})
and in the terms involving $\bu$  
in (\ref{eq:oldroyd-b-sigma2}), where 
$\BL(s):= \min \{s,L\}$ for a given $L \gg 1$.
Similar regularizations have been 
introduced for the microscopic-macroscopic dumbbell model of dilute polymers
with a finitely extensible nonlinear elastic (FENE) spring law,
see Barrett and S\"{u}li,\cite{barrett-suli:2008} and
for the convergence of the finite element approximation 
of such models, see 
Barrett and S\"{u}li.\cite{barrett-suli:2009}
In fact, it is argued in Barrett and S\"{u}li\cite{barrett-suli:2007} and Schieber\cite{SCHI}
that 
the dissipative 
term $\alpha\,\Delta \strs$ is not a regularization,
but is present in the original model with a positive $\alpha \ll 1$.    
Here we recall that the Oldroyd-B system is   
the macroscopic closure of 
the microscopic-macroscopic dumbbell model 
with a Hookean spring law, see e.g.\ Barrett and S\"{u}li.\cite{barrett-suli:2007}
 
Overall the aims of this paper are threefold.
First, we extend previous results in Boyaval {\it et al.}\cite{boyaval-lelievre-mangoubi-09}
for a finite element approximation of (P) 
using essentially the backward Euler scheme in time and 
based on approximating 
the pressure and the   
symmetric conformation tensor 
by piecewise constants;  
and
the velocity field
with continuous piecewise quadratics 
or a reduced version, 
where the tangential component on each simplicial edge ($d=2$) 
or face ($d=3$) is linear.
We show that solutions of this numerical scheme
satisfy a discrete free energy bound, which involves the logarithm of
the conformation tensor,
{\it without} any constraint on the time step,
whereas a time constraint based on the initial data 
was required in Boyaval {\it et al.}\cite{boyaval-lelievre-mangoubi-09}
in order to ensure that the approximation to the conformation tensor $\strs$ remained 
positive definite.
See also Lee and Xu,\cite{lee-xu} where the difficulties of maintaining
the positive definiteness of approximations to $\strs$ are also discussed.
We achieve our result by first  
introducing problem (P$_\delta$), based on 
a regularization parameter $\delta \in \R_{>0}$. 
(P$_\delta$) satisfies a regularized free energy estimate
based on a regularization of $\ln$ and is valid
without the positive definiteness constraint on the deformation tensor.

Second, we show that it is possible to approximate (P)
with a {\it continuous} (piecewise linear) approximation of the conformation tensor,
such that a discrete free energy bound still holds.
We note  that a piecewise constant  
approximation of the conformation tensor
was necessary 
in Boyaval {\it et al.}\cite{boyaval-lelievre-mangoubi-09}
in order to treat the advection term
in  (\ref{eq:oldroyd-b-sigma2}) and still obtain
a discrete free energy bound.   

Third, we show (subsequence) convergence, as the spatial and temporal
discretization parameters tend to zero, of this latter approximation 
in the presence of the regularization terms stated above
to global-in-time weak solutions of the corresponding regularized form of (P).

The outline of this paper is as follows.
We end Section \ref{sec1} by introducing our notation and auxiliary results.
In Section \ref{sec:delta} we introduce our regularizations of $\ln$ based on the 
parameter $\delta \in (0,\frac{1}{2}]$ and the cut-off $L \ge 2$.
We introduce our regularized problem (P$_\delta$),
and show a formal free energy estimate for it.
In Section \ref{sec:deltah}, on assuming that $\D$ is a polytope, 
we introduce our finite element approximation of 
(P$_\delta$), (P$^{\Delta t}_{\delta,h}$)
based on approximating 
the pressure and the   
symmetric conformation tensor 
by piecewise constants;  
and
the velocity field
with continuous piecewise quadratics 
or a reduced version, 
where the tangential component on each simplicial edge ($d=2$) 
or face ($d=3$) is linear.
Using the Brouwer fixed point theorem, we prove existence of a 
solution to (P$^{\Delta t}_{\delta,h}$)
and show that it satisfies 
a discrete regularized free energy estimate for any choice of time step; see Theorem~\ref{dstabthm}.
We conclude by showing that, in the limit $\delta \to 0_+$, 
these solutions of 
(P$^{\Delta t}_{\delta,h})$
converge to a solution of (P$^{\Delta t}_{h}$) 
with the approximation of the conformation tensor being positive definite.
Moreover, this solution of (P$^{\Delta t}_{h}$) 
satisfies a discrete free energy estimate;  see Theorem~\ref{dconthm}.

In Section \ref{sec:Palpha} we introduce our regularizations
(P$_\alpha^{(L)}$) of (P) involving  
the dissipative term $\alpha\,\Delta \strs$  
on the right-hand side of (\ref{eq:oldroyd-b-sigma2}), 
and possibly the cut-off $\BL(\strs)$
on certain terms involving $\strs$ 
in (\ref{eq:oldroyd-b-sigma},c).
We then introduce the corresponding regularized version 
(P$_{\alpha,\delta}^{(L)}$),
and show a 
formal free energy estimate for it.
In Section \ref{sec:Palphah}  
we introduce our finite element approximation of (P$_{\alpha,\delta}^{(L)}$), 
(P$^{(L,)\Delta t}_{\alpha,\delta,h}$)
based on approximating 
the pressure and the   
symmetric conformation tensor 
by continuous piecewise linears;  
and
the velocity field
with continuous piecewise quadratics 
or the mini-element.  
Here we assume that 
the finite element mesh 
consists of non-obtuse simplices.
Using the Brouwer fixed point theorem, we prove existence of a 
solution to (P$^{(L,)\Delta t}_{\alpha,\delta,h}$)
and show that is satisfies 
a discrete regularized free energy estimate 
for any choice of time step; see Theorem~\ref{dstabthmaorL}.
We conclude by showing that, in the limit $\delta \to 0_+$, 
these solutions of 
(P$^{(L,)\Delta t}_{\alpha,\delta,h})$
converge to a solution of (P$^{(L,)\Delta t}_{\alpha,h}$)
with the approximation of the conformation tensor being positive definite.
Moreover, this solution of (P$^{(L,)\Delta t}_{\alpha,h}$) 
satisfies a discrete free energy estimate;  see Theorem~\ref{dLconthm}.

In Section~\ref{sec:convergence} we assume, in addition, that 
$\D$ is a convex polytope
and that the finite element mesh 
consists of quasi-uniform simplices.
We then prove (subsequence) convergence of the solutions 
of (P$_{\alpha,h}^{L,\Delta t}$), as the spatial and temporal
discretization parameters tend to zero,  
to global-in-time weak solutions of (P$_\alpha^L$); see Theorem~\ref{convaL}.
Finally
in Section~\ref{sec7}, on further assuming that $d=2$
and a time step restriction dependent on the spatial discretization parameter,
we prove (subsequence) convergence of the solutions 
of (P$_{\alpha,h}^{\Delta t}$), as the spatial and temporal
discretization parameters tend to zero,  
to global-in-time weak solutions of (P$_\alpha$); see Theorem~\ref{conva}.
We note that these existence results for (P$_\alpha^{(L)}$) are new to the literature.
In addition, the corresponding $L^\infty(0,T;L^2(\Omega))\cap L^2(0,T;H^1(\Omega))$
norms of the velocity solution $\bu_\alpha^{(L)}$ of (P$_\alpha^{(L)}$) are independent
of the regularization parameters $\alpha$ (and $L$).

In a forthcoming paper,\cite{barrett-boyaval} we will extend the ideas in this paper to
a related macroscopic model, the FENE-P model;
see Hu and Leli\`{e}vre,\cite{hu-lelievre} 
where a free energy estimate is developed for such a model,
as well as Oldroyd-B. In addition, we will report 
in the near future
on numerical computations based
on the finite element approximations in this paper and Barrett and Boyaval.\cite{barrett-boyaval}

\subsection{Notation and auxiliary results}

The absolute value and the negative part of a real number $s\in\R$ 
are denoted by $\Abs{s}:=\max\{s,-s\}$ and $[s]_{-}=\min\{s,0\}$, respectively.
In addition to $\RS$,
the set of symmetric $\R^{d \times d}$ matrices, we let   $\RSPD$
be  the set of symmetric positive definite  $\R^{d \times d}$ matrices.
We adopt the following notation for inner products: 
\begin{subequations}
\begin{alignat}{2}
 \bv\cdot\bw &:=\sum_{i=1}^d \bv_i \bw_i
 \equiv \bv^T \bw = \bw^T \bv  \qquad &&\forall \bv,\bw \in {\mathbb R}^d,
 \label{ipvec}
 \\
 \bphi : \bpsi &:=\sum_{i=1}^d \sum_{j=1}^d \bphi_{ij} \bpsi_{ij}
 \equiv \tr\brk{\bphi^T \bpsi}=\tr\brk{\bpsi^T \bphi} \qquad 
 &&\forall \bphi,\bpsi \in \R^{d\times d},
 \label{ipmat}
 \\
 \grad \bphi :: \grad \bpsi &:=\sum_{i=1}^d \sum_{j=1}^d \grad \bphi_{ij} 
 \cdot \grad \bpsi_{ij}\qquad 
 &&\forall \bphi,\bpsi \in \R^{d\times d};
 \label{ipgrad}
\end{alignat}
\end{subequations}
where $\cdot^T$ and $\tr\brk{\cdot}$ denote transposition and trace, respectively.
The corresponding norms are
\begin{subequations}
\begin{alignat}{3}
 \Norm{\bv}&:= (\bv \cdot \bv)^{\frac{1}{2}},
 \qquad &&\Norm{\grad \bv}:= (\grad \bv : \grad \bv)^{\frac{1}{2}}
 \qquad &&\forall \bv \in \R^d;
 \label{normv}
 \\
 \Norm{\bphi} &:= (\bphi : \bphi)^{\frac{1}{2}}, 
 \qquad &&\Norm{\grad \bphi} := (\grad \bphi :: \grad \bphi)^{\frac{1}{2}}, 
 \qquad &&\forall \bphi \in \R^{d\times d}.
 \label{normphi}
\end{alignat}
\end{subequations}
We will use on several occasions that $\tr(\bphi)=\tr(\bphi^T)$ and
$\tr(\bphi\bpsi)=\tr(\bpsi\bphi)$ for all $\bphi, \bpsi \in \R^{d \times d}$.
In particular, we note that:
\begin{subequations}
\begin{alignat}{2} \label{eq:symmetric-tr}
 \bphi\bchi^T:\bpsi 
 &= \bchi\bphi:\bpsi = \bchi : \bpsi \bphi 
 \qquad &&\forall \bphi,\bpsi\in \RS, 
 \ \bchi\in \R^{d\times d}
 \,,\\
 \Norm{\bpsi\bphi} &\leq \Norm{\bpsi} \Norm{\bphi}
\qquad &&\forall \bphi,\bpsi\in\R^{d\times d}\,. 
\label{normprod}
\end{alignat}
\end{subequations}

In addition, for any $\bphi \in \RS$,
there exists a diagonal decomposition
\beq \label{eq:diagonal-decomposition} 
\bphi= \mathbf{O}^T \mathbf{D} \mathbf{O} 
\qquad \Rightarrow \qquad \tr\brk{\bphi} = \tr\brk{\mathbf{D}} 
\,,
\eeq
where $\mathbf{O}\in \R^{d \times d}$ is 
an orthogonal matrix and $\mathbf{D} \in \R^{d \times d}$ 
a diagonal matrix. 
Hence, for any $g:\R\to\R$,  
one can define 
$g(\bphi) \in \RS$ as
\beq \label{eq:tensor-g}
g(\bphi) := \mathbf{O}^T g({\bf D})\mathbf{O} 
\qquad \Rightarrow \qquad \tr\brk{g(\bphi)} = \tr\brk{g(\mathbf{D})}
\,,
\eeq
where $g({\bf D}) \in \RS$ is the diagonal matrix with entries $[g({\bf D})]_{ii} 
= g({\bf D}_{ii})$, $i=1 \rightarrow d$.
Although the diagonal decomposition~\eqref{eq:diagonal-decomposition} is not unique,
\eqref{eq:tensor-g} uniquely defines $g(\bphi)$.
Similarly, one can define $g(\bphi) \in \RSPD$, 
when $\bphi \in \RSPD$
and $g:\R_{>0}\to\R$.
We note for later purposes that the choice $g(s)=|s|$ for $s \in {\mathbb R}$ yields
that
\begin{align}
d^{-1} (\tr(|\bphi|))^2 \leq \Norm{\bphi}^2 
\leq (\tr(|\bphi|))^2 
\qquad \forall \bphi \in \RS\,.
\label{modphisq}
\end{align}


We adopt the standard notation for Sobolev spaces, e.g.\ 
$H^1(\D) := \{ \eta:\D \rightarrow 
\R \,:\, \intd [\,|\eta|^2 + \|\nabla \eta\|^2\,] $ $ < \infty\}$
with $H^1_0(\D)$ being the closure of $C^\infty_0(\D)$ for 
the corresponding norm $\|\cdot\|_{H^1(\D)}$.
We denote the associated semi-norm as $|\cdot|_{H^1(\D)}$.
The topological dual of the Hilbert space $H^1_0(\D)$, with pivot space  $L^2(\D)$,
will be denoted by $H^{-1}(\D)$.
We denote the duality pairing between $H^{-1}(\D)$ and $H^1_0(\D)$
as $\langle \cdot,\cdot\rangle_{H^1_0(\D)}$.
Such function spaces are naturally extended when the range $\R$ is replaced
by $\R^d$, $\R^{d \times d}$ and $\RS$; 
e.g.\ $H^1(\D)$ becomes
$[H^1(\D)]^d$,  $[H^1(\D)]^{d \times d}$ and  
$[H^1(\D)]^{d \times d}_{S}$ , 
respectively. 
For ease of notation, we write the corresponding norms 
and semi-norms as
$\| \cdot\|_{H^1(\D)}$ and $| \cdot|_{H^1(\D)}$, respectively,
as opposed to e.g.\ $\|\cdot\|_{[H^1(\D)]^d}$
and $|\cdot|_{[H^1(\D)]^d}$, respectively. 
Similarly, we write $\langle \cdot,\cdot\rangle_{H^1_0(\D)}$
for the duality pairing between e.g.\  
$[H^{-1}(\D)]^d$ and $[H^1_0(\D)]^d$.
We recall the Poincar\'e inequality
\beq \label{eq:poincare}
 \int_\D \|\bv\|^2 \leq C_P \int_\D \|\gbv\|^2 \qquad \forall \bv \in [H^1_0(D)]^d\,, 
\eeq
where $C_P \in \R_{>0}$ depends only on $\D$.
For notational convenience, we introduce also convex sets such as
$[H^1(\D)]^{d \times d}_{SPD} := \{ \bphi \in [H^1(\D)]^{d \times d}_S :
\bphi \in \RSPD \mbox{ a.e. in } \D \}$. 
Moreover, 
in order to analyse (P),
we adopt the notation 
\begin{align}
\nonumber
\U  &:= [H^1_0(\D)]^d, \qquad  {\rm Q}:= L^2(\D), 
\qquad  \Uz := \BRK{\bv\in\U \,:\,\intd q\,\div\bv =0 \quad \forall q\in{\rm Q}},\\
\ \S &:= [L^1(\D)]^{d \times d}_S
\qquad \mbox{and} \qquad 
\SPD :=  [L^1(\D)]^{d \times d}_{SPD}\,.
\label{spaces}
\end{align}
Finally, throughout the paper $C$ will denote a generic positive constant independent
of the regularization parameters $\delta,\,L$ and $\alpha$;
and the mesh parameters $h$ and $\Delta t$.

\section{Formal free energy estimates for a regularized problem (P$_\delta$)}
\label{sec:delta}

\subsection{Some regularizations}

Let $G:s \in \R_{>0} \mapsto \ln s\in\R$ be the logarithm function,
whose domain of definition can be 
straightforwardly extended to the set of symmetric positive definite 
matrices 
using (\ref{eq:diagonal-decomposition}).
We define the following two concave 
$C^1(\R)$ regularizations of $G$ based on given parameters  
$L>1>\delta>0$:
\begin{align} 
\nonumber
&\Gd : s \in \R \mapsto 
\begin{cases}
G(s) \,, & \forall s \ge \delta 
\\
\frac{s}{\delta}+G(\delta)-1 \,, & \forall s \leq \delta
\end{cases}
\quad \mbox{and} \quad
\GdL : s \in \R \mapsto \
\begin{cases}
G^L(s) \,, & \forall s \geq \delta 
\\
\Gd(s) \,, & \forall s \leq \delta
\end{cases}
\,,\\
&\mbox{where}
\quad 
G^L : s \in \R_{>0} \mapsto \
\begin{cases}
\frac{s}{L}+G(L)-1 \,, & \forall s \geq L 
\\
G(s) \,, & \forall s \in (0,L] 
\end{cases}
\,. 
\label{eq:Gd}
\end{align}
We define also the following scalar functions 
\begin{align}
\BdorL(s):=\brk{\GdorL'(s)}^{-1} 
\quad \forall s\in \R 
\quad \mbox{and}
\quad
\BorL(s):=\brk{\GorL'(s)}^{-1} 
\quad
\forall s\in \R_{>0}\,;
\label{eq:BGd}
\end{align}
where, here and throughout this paper, $\cdot^{(\star)}$ denotes
an expression with or without the superscript $\star$, and 
a similar convention with subscripts.
Hence we have that
\begin{align} \nonumber
&\Bd : s \in \R \mapsto 
\max\{s,\delta\}\,,
\qquad 
\BdL : s \in \R \mapsto 
\min\{\beta_\delta(s),L\}
\,,\\
&\beta : s \in \R_{>0} \mapsto s 
\qquad \mbox{and} \qquad
\BL : s \in \R_{>0} \mapsto \min\{\beta(s),L\}
\,.
\label{eq:Bd}
\end{align}
We note for example that
\begin{align}
& \Norm{\BdL(\bphi)}^2  
\leq dL^2 \quad \forall \bphi \in \RS
\quad \mbox{and}\quad
\Norm{\BL(\bphi)}^2 
\leq dL^2 
\quad \forall \bphi \in \RSPD\,.
\label{BdLphi}
\end{align}
\begin{figure}
\begingroup
\setlength{\unitlength}{0.0200bp}%
\begin{picture}(18000,8640)(500,0)%
\includegraphics{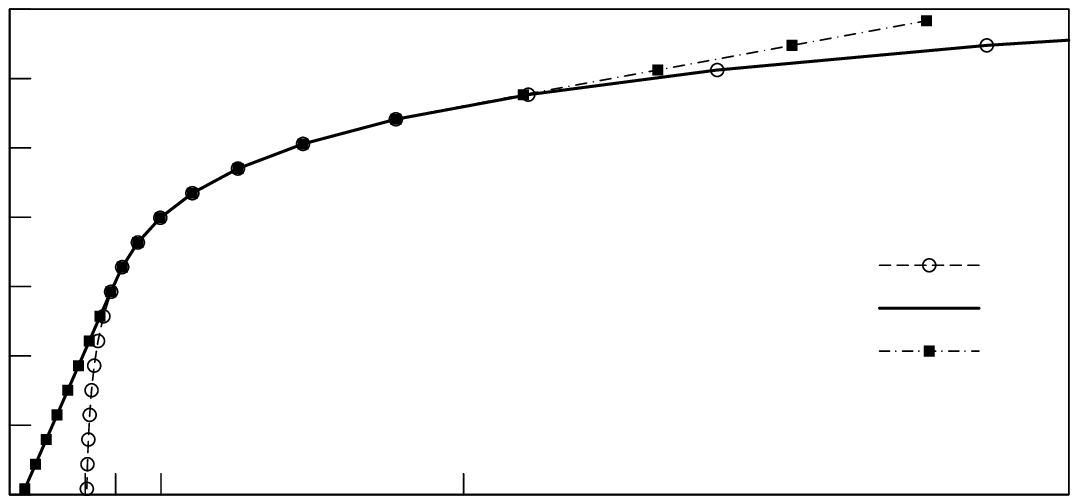}%
\end{picture}%
\begin{picture}(18000,8640)(18500,0)%
\put(1650,1100){\makebox(0,0)[r]{\strut{}$-4$}}%
\put(1650,2099){\makebox(0,0)[r]{\strut{}$-3$}}%
\put(1650,3097){\makebox(0,0)[r]{\strut{}$-2$}}%
\put(1650,4096){\makebox(0,0)[r]{\strut{}$-1$}}%
\put(1650,5094){\makebox(0,0)[r]{\strut{}$0$}}%
\put(1650,6093){\makebox(0,0)[r]{\strut{}$1$}}%
\put(1650,7091){\makebox(0,0)[r]{\strut{}$2$}}%
\put(1650,8090){\makebox(0,0)[r]{\strut{}$3$}}%
\put(8461,550){\makebox(0,0){\strut{}$L$}}%
\put(4104,550){\makebox(0,0){\strut{}$1$}}%
\put(3450,550){\makebox(0,0){\strut{}$\delta$}}%
\put(3014,550){\makebox(0,0){\strut{}$0$}}%
\put(14177,4402){\makebox(0,0){$G \hspace{7mm}$}}%
\put(14177,3840){\makebox(0,0)[r]{\strut{}$G_\delta\,$}}%
\put(14177,3278){\makebox(0,0)[r]{\strut{}$G_\delta^L$}}%
\end{picture}%
\endgroup
\caption{The function $G$ and its regularizations.}
\end{figure}
Introducing the concave $C^1(\R)$ functions
\begin{align} 
\HdL(s):=G_{L^{-1}}^{\delta^{-1}}(s) \qquad \forall s \in \R
\qquad\mbox{and}\qquad
H_\delta(s) :=G^{\delta^{-1}}(s) \qquad \forall s \in \R_{>0}\,,
\label{eq:HdL}
\end{align} 
it follows from (\ref{eq:Gd}) and (\ref{eq:Bd}) that
\begin{align}
\HdorL'(\GdorL'(s)) = \BdorL(s) \qquad \forall s \in \R
\,.
\label{eq:HBdL}
\end{align}
For later purposes, we prove the following results concerning these functions.
\begin{lemma}\label{GLemma}
The following hold 
for any $\bphi, \bpsi \in \RS$ and  
for any $L >1 >\delta >0$ that
\begin{subequations}
\begin{align} 
\label{eq:inverse-Gd}
[\BdorL(\bphi)][\GdorL'(\bphi)]&=[\GdorL'(\bphi)][\BdorL(\bphi)]=\I\,,  
\\
\label{eq:positive-term}
\tr\brk{\BdorL(\bphi)+[\BdorL(\bphi)]^{-1}-2\I} &\ge 0 \,,
\\
\label{Entropy1}
\tr\brk{\bphi-\GdorL(\bphi)-\I} & \ge 0\,,\\
\label{expositive}
\brk{\bphi - \BdorL(\bphi)}:\brk{\I-\GdorL'(\bphi)} &\geq 0\,,\\
\label{eq:concavity}
\brk{\bphi-\bpsi}:\brk{\GdorL'(\bpsi)}&\ge\tr\brk{\GdorL(\bphi)-\GdorL(\bpsi)} \,,
\\
\label{matrix:Lipschitz}
- \brk{\bphi-\bpsi}:\brk{\GdorL'(\bphi)-\GdorL'(\bpsi)}
& \ge
\delta^2 \Norm{\GdorL'(\bphi)-\GdorL'(\bpsi)}^2\,.
\end{align}
\end{subequations}
In addition, if $\delta \in (0,\frac{1}{2}]$ and $L \ge 2$ we have that 
\begin{align}
\label{Entropy2}
\tr\brk{\bphi-\GdorL(\bphi)}&\ge \left\{\begin{array}{ll}
\frac{1}{2} \|\bphi\|  
\\[2mm]
\frac{1}{2\delta} \|[\bphi]_{-}\| 
\end{array}\right. 
\quad \mbox{\rm and}
\quad  
\bphi:\brk{\I-\GdorL'(\bphi)}\ge 
\textstyle \frac{1}{2}  
\|\bphi\|
-d
\,.
\end{align}
\end{lemma}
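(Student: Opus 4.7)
The overall strategy is to reduce each inequality to a pointwise eigenvalue statement using the spectral decomposition (\ref{eq:diagonal-decomposition})--(\ref{eq:tensor-g}) whenever only one matrix is involved, and to invoke concavity of $\tr\GdorL(\cdot)$ for (\ref{eq:concavity}) together with an integral representation along a segment for the more delicate (\ref{matrix:Lipschitz}).

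For (\ref{eq:inverse-Gd})--(\ref{expositive}) and (\ref{Entropy2}), diagonalise $\bphi = \mathbf{O}^T \mathbf{D} \mathbf{O}$; then $\BdorL(\bphi)$, $\GdorL(\bphi)$ and $\GdorL'(\bphi)$ all share this eigenbasis, so every inner product or trace on the left collapses to a sum of scalar expressions in the eigenvalues $\lambda_i$ of $\bphi$. Because $\GdorL$ and $\BdorL$ are piecewise defined in (\ref{eq:Gd})--(\ref{eq:Bd}), each scalar check splits into three branches: (\ref{eq:inverse-Gd}) is the pointwise identity $\BdorL(s)\,\GdorL'(s)=1$ built into (\ref{eq:BGd}); (\ref{eq:positive-term}) reduces to $(\mu^{1/2}-\mu^{-1/2})^2 \ge 0$ with $\mu\in\{\delta,\lambda_i,L\}$; (\ref{Entropy1}) follows from the classical $\ln\lambda \le \lambda-1$ on the middle branch and from a short algebraic check on the two affine branches; (\ref{expositive}) is trivially zero on the middle branch $[\delta,L]$ and has two factors of the same sign on each outer branch, thanks to $\delta<1<L$. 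The three inequalities (\ref{Entropy2}) are handled by the same branch-wise scalar analysis, with $\delta\le\tfrac{1}{2}$ and $L\ge 2$ being precisely the thresholds that make the required bounds pass on the affine branches.

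The tangent-plane estimate (\ref{eq:concavity}) expresses concavity of the functional $\bphi \mapsto \tr \GdorL(\bphi)$ on $\RS$. Since the scalar $\GdorL$ is concave and of class $C^1$, this trace functional is concave with Fr\'echet derivative $\GdorL'(\bpsi)$ (Klein's inequality), and (\ref{eq:concavity}) is just its first-order characterisation.

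The main obstacle is (\ref{matrix:Lipschitz}), because $\bphi$ and $\bpsi$ need not commute. The plan is to interpolate $\bphi(r) := \bpsi + r(\bphi - \bpsi)$ for $r \in [0,1]$ and to apply the Daletski--Krein derivative formula
\begin{equation*}
\tfrac{d}{dr}\GdorL'(\bphi(r)) = -\sum_{i,j} c_{ij}(r)\,P_i(r)(\bphi-\bpsi)P_j(r),
\end{equation*}
where $\{P_i(r)\}$ are the spectral projectors of $\bphi(r)$ and $c_{ij}(r)$ is the negative of the divided difference of $\GdorL'$ at $(\lambda_i(r),\lambda_j(r))$. A direct inspection of $(\GdorL')' \in [-\delta^{-2},0]$ on (\ref{eq:Gd}) gives $c_{ij}(r) \in [0,\delta^{-2}]$. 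Integrating, pairing with $\bphi-\bpsi$, and exploiting orthogonality of the terms $P_i(r)(\bphi-\bpsi) P_j(r)$ for distinct index pairs yields
\begin{equation*}
-(\bphi-\bpsi):(\GdorL'(\bphi)-\GdorL'(\bpsi)) = \int_0^1 \sum_{i,j} c_{ij}(r)\,\Norm{P_i(r)(\bphi-\bpsi)P_j(r)}^2\, dr,
\end{equation*}
while Jensen in $r$ together with $c_{ij}^2 \le \delta^{-2} c_{ij}$ gives
\begin{equation*}
\Norm{\GdorL'(\bphi)-\GdorL'(\bpsi)}^2 \le \delta^{-2}\int_0^1 \sum_{i,j} c_{ij}(r)\,\Norm{P_i(r)(\bphi-\bpsi)P_j(r)}^2\, dr.
\end{equation*}
Comparing these two identities delivers (\ref{matrix:Lipschitz}). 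Eigenvalue crossings along $r \mapsto \bphi(r)$ are handled by the usual density/perturbation argument reducing to the case of simple spectra, where the spectral projectors depend smoothly on $r$.
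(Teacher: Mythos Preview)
Your argument is correct, but for (\ref{eq:concavity}) and especially (\ref{matrix:Lipschitz}) the paper takes a markedly more elementary route. Instead of invoking Klein's inequality or the Daleckii--Krein derivative formula, the paper diagonalises \emph{both} matrices simultaneously: writing $\bphi=\mathbf{O}_\phi^T\mathbf{D}_\phi\mathbf{O}_\phi$, $\bpsi=\mathbf{O}_\psi^T\mathbf{D}_\psi\mathbf{O}_\psi$ and setting $\mathbf{O}=\mathbf{O}_\phi\mathbf{O}_\psi^T$, one checks directly that
\[
-(\bphi-\bpsi):(\GdorL'(\bphi)-\GdorL'(\bpsi))
=-\sum_{i,j}\mathbf{O}_{ji}^2\bigl([\mathbf{D}_\phi]_{jj}-[\mathbf{D}_\psi]_{ii}\bigr)
\bigl(\GdorL'([\mathbf{D}_\phi]_{jj})-\GdorL'([\mathbf{D}_\psi]_{ii})\bigr)
\]
and that the same weighted double sum (with the same nonnegative weights $\mathbf{O}_{ji}^2$, which sum to $1$ along rows and columns) equals $\Norm{\GdorL'(\bphi)-\GdorL'(\bpsi)}^2$ when the summand is replaced by $(\GdorL'([\mathbf{D}_\phi]_{jj})-\GdorL'([\mathbf{D}_\psi]_{ii}))^2$. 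The matrix estimate then follows term by term from the scalar inequality $-(s_1-s_2)(\GdorL'(s_1)-\GdorL'(s_2))\ge\delta^2(\GdorL'(s_1)-\GdorL'(s_2))^2$. The same device handles (\ref{eq:concavity}).

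This buys two things over your approach: it is entirely self-contained (no operator-calculus machinery, no density argument for eigenvalue crossings), and it sidesteps the minor technical issue that $\GdorL'$ is only $C^{0,1}$ rather than $C^1$, which in your route would require smoothing $\GdorL'$ before applying Daleckii--Krein and then passing to the limit. Your argument, on the other hand, is the ``right'' abstract picture and would generalise immediately to other operator-monotone/operator-Lipschitz settings where the double-diagonalisation trick is less transparent.
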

\begin{proof}
The result
(\ref{eq:inverse-Gd})
follows immediately from (\ref{eq:tensor-g}) 
and as $\BdorL(s)=\GdorL'(s)$ for all $s \in \R$. 
The desired results  (\ref{eq:positive-term}--d) follow similarly, on noting 
the scalar inequalities $\BdorL(s)+[\BdorL(s)]^{-1}\ge2$,  
$s-\GdorL(s)\ge 1$ 
and $(s-\BdorL(s))(1-\GdorL'(s)) \geq 0$
for all $s \in \R$.

We note that $\GdorL$ are concave functions like $G$,
and hence they satisfy the following inequality 
\beq \label{eq:scalar-concavity}
(s_1-s_2)\GdorL'(s_2)\ge\GdorL(s_1)-\GdorL(s_2) \qquad \forall s_1,s_2 \in \R\,;
\eeq
Hence for any $\bphi,\bpsi \in \RS$ with $\bphi 
= {\bf O}_\phi^T {\bf D}_\phi {\bf O}_\phi$ and $\bpsi= 
{\bf O}_\psi^T {\bf D}_\psi {\bf O}_\psi$,
where ${\bf O}_{\phi}, {\bf O}_{\psi} \in \R^{d \times d}$ orthogonal
and  ${\bf D}_{\phi}, {\bf D}_{\psi} \in \R^{d \times d}$ diagonal, we have, on noting 
the properties of trace, that
\begin{align}
(\bphi-\bpsi)
: \GdorL'(\bpsi)=
\tr\brk{(\bphi-\bpsi)\GdorL'(\bpsi)} =
\tr\brk{({\bf O}^T {\bf D}_\phi {\bf O} -{\bf D}_\psi)\GdorL'({\bf D}_\psi)}\,,
\label{eq:matrix-concavity}
\end{align}
where ${\bf O}={\bf O}_\phi {\bf O}_\psi^T \in  \R^{d \times d}$ is orthogonal
and hence $\sum_{i=1}^d [{\bf O}_{ij}]^2=  \sum_{i=1}^d [{\bf O}_{ji}]^2=1$
for $j=1 \rightarrow d$.
Therefore we have, on noting these properties of ${\bf O}$,
(\ref{eq:scalar-concavity}) and (\ref{eq:tensor-g}), that
\begin{align}
\nonumber
\tr\brk{({\bf O}^T {\bf D}_\phi {\bf O} -{\bf D}_\psi)\GdorL'({\bf D}_\psi)}
& = \sum_{i=1}^d    
\brk{\sum_{j=1}^d [{\bf O}_{ji}]^2   [{\bf D}_\phi]_{jj} 
-[{\bf D}_\psi]_{ii}}[\GdorL'({\bf D}_\psi)]_{ii}
\\
& = \sum_{i=1}^d  \sum_{j=1}^d
[{\bf O}_{ji}]^2   
\brk{[{\bf D}_\phi]_{jj} 
-[{\bf D}_\psi]_{ii}}[\GdorL'({\bf D}_\psi)]_{ii}
\nonumber
\\
& \geq \sum_{i=1}^d  \sum_{j=1}^d
[{\bf O}_{ji}]^2   
\brk{[\GdorL({\bf D}_\phi)]_{jj}
-[\GdorL({\bf D}_\psi)]_{ii}}
\nonumber
\\
&= \tr\brk{\GdorL({\bf D}_\phi)}-\tr\brk{\GdorL({\bf D}_\psi)}
\nonumber
\\
&=\tr\brk{\GdorL({\bphi})-\GdorL(\bpsi)}\,.
\label{Entproof1}
\end{align}
Combining (\ref{eq:matrix-concavity}) and (\ref{Entproof1})
yields the desired result (\ref{eq:concavity}).

We note that $-\GdorL' \in C^{0,1}(\R)$ is monotonically increasing with Lipschitz
constant $\delta^{-2}$ and so
\begin{align}
-(s_1-s_2) (\GdorL'(s_1)-\GdorL'(s_2))
& \ge \delta^2
[\GdorL'(s_1)-\GdorL'(s_2)]^2
\qquad \forall s_1,\,s_2 \in \R.
\label{scalar:Lipschitz}
\end{align} 
Then, similarly to (\ref{eq:matrix-concavity}) and (\ref{Entproof1}),
we have, on noting (\ref{scalar:Lipschitz}), that
\begin{align}
&- (\bphi-\bpsi) : (\GdorL'(\bphi)-\GdorL'(\bpsi)) 
\nonumber
\\
& \hspace{0.5in}
=-\left[ 
\tr \brk{({\bf D}_\phi -{\bf O}{\bf D}_\psi {\bf O}^T)\GdorL'({\bf D}_\phi)}
-\tr\brk{({\bf O}^T {\bf D}_\phi {\bf O} -{\bf D}_\psi)\GdorL'({\bf D}_\psi)}
\right]
\nonumber \\
& \hspace{0.5in}= -\sum_{i=1}^d  \sum_{j=1}^d
[{\bf O}_{ji}]^2   
\brk{[{\bf D}_\phi]_{jj} 
-[{\bf D}_\psi]_{ii}}([\GdorL'({\bf D}_\phi)]_{jj}-[\GdorL'({\bf D}_\psi)]_{ii})
\nonumber \\
& \hspace{0.5in}\geq  \delta^2 \sum_{i=1}^d  \sum_{j=1}^d
[{\bf O}_{ji}]^2 ([\GdorL'({\bf D}_\phi)]_{jj}-[\GdorL'({\bf D}_\psi)]_{ii})^2
\nonumber \\
& \hspace{0.5in}= \delta^2
\tr\brk{(\GdorL'(\bphi)-\GdorL'(\bpsi))^2} 
= \delta^2 
\,\|\GdorL'(\bphi)-\GdorL'(\bpsi)\|^2
\label{Entproof2}
\end{align}
and hence the desired result (\ref{matrix:Lipschitz}).

Finally the results (\ref{Entropy2}) follow 
from (\ref{eq:tensor-g}) and (\ref{modphisq}) on noting  
the following scalar inequalities 
\beq \label{eq:scalar-ineq2}
s-\GdorL(s)\ge \left\{ \begin{array}{ll} \frac{1}{2}\,\Abs{s} \\[2mm]
\frac{1}{2\delta}\,\Abs{[s]_{-}}
\end{array}
\right.
\qquad \mbox{and}
\qquad 
s(1-\GdorL'(s)) \geq \textstyle \frac{1}{2}|s|-1 
\qquad\forall s \in \R\,,
\eeq 
which are easily deduced if $\delta \in (0,\frac{1}{2}]$ and $L \ge 2$.
\end{proof}

Clearly (\ref{eq:concavity}) holds for any concave function $g \in C^1(\R)$,
not just $\GdorL$, and this implies that
\begin{align}
(\bphi-\bpsi) : g'(\bpsi) \geq
\tr\brk{g(\bphi)-g(\bpsi)} \geq (\bphi-\bpsi) : g'(\bphi)
\qquad \forall \bphi,\bpsi \in \RS.
\label{gconcave2}
\end{align}
For a convex function $g \in C^1(\R)$, the inequalities in  
(\ref{gconcave2})
are reversed.  
Hence for any concave or convex function 
$g \in C^1(\R)$ and for any $\bphi \in C^1([0,T);\RS)$
one can deduce from the above that
\begin{equation}\label{eq:deriv-tensor-g}
\deriv{}{t}\tr\brk{g(\bphi)}
= \tr\brk{\deriv{\bphi}{t} g'(\bphi)} = 
\brk{\deriv{\bphi}{t}}:g'(\bphi) 
\qquad \forall t \in [0,T).
\end{equation}
Of course, a similar result holds true for spatial derivatives. 
Furthermore, these results hold true 
if $\bphi$ is in addition positive definite,
and $g \in C^1(\R_{>0})$ is a concave or convex function.
Finally, we note that one can use the approach in (\ref{Entproof1}) to show that
if $g \in C^{0,1}(\mathbb R)$ with Lipschitz constant $g_{\rm Lip}$, then
\begin{align}
\Norm{g(\bphi)-g(\bpsi)} \leq g_{\rm Lip} \Norm{\bphi-\bpsi}
\qquad \forall \bphi, \bpsi \in \RS\,.
\label{Lip}
\end{align}

\subsection{Regularized problem (P$_\delta$)}

Using the regularizations $\Gd$ introduced above with parameter $\delta$
we consider the following regularization of (P) for a given $\delta \in (0,\frac{1}{2}]$:

\noindent
{\bf (P$_{\delta}$)} Find 
$\bud : (t,\xx)\in[0,T)\times\D\mapsto\bud(t,\xx)\in\R^d$,  
$\prd : (t,\xx)\in(0,T)\times\D\mapsto \prd(t,\xx)\in\R$ 
and $\strd : (t,\xx)\in[0,T)\times\D\mapsto\strd(t,\xx)\in \RS$ 
such that
\begin{subequations}
\begin{align}
\Re \brk{\pd{\bud}{t} + (\bud\cdot\grad)\bud} & =  -\grad \prd 
+ (1-\e)\Delta\bud 
+  \frac{\e}{\Wi}\div\Bd(\strd) + \f 
\nonumber \\
\label{eq:oldroyd-b-sigmadL}
&\hspace{1.8in}
\quad\mbox{on } \D_T 
\,, 
\\
\div\bud & = 0 
\hspace{1.7in} \mbox{on } \D_T 
\,, 
\label{eq:oldroyd-b-sigma1dL}
\\
\pd{\strd}{t}+(\bud\cdot\grad)\strd & = 
(\gbud)\Bd(\strd)+\Bd(\strd)(\gbud)^T
-\frac{1}{\Wi}\brk{\strd-\I} 
\nonumber \\
& \hspace{1.8in}
\quad \mbox{on } \D_T 
\,,
\label{eq:oldroyd-b-sigma2dL}
\\
\bud(0,\xx) &= \bu^0(\xx) 
\label{eq:initialdL}
\hspace{1.4in}
\forall \xx \in \D\,,\\
\strd(0,\xx) &= \strs^0(\xx) 
\hspace{1.4in}
\forall \xx \in \D\,,
\label{eq:initial1dL}
\\
\label{eq:dirichletdL}
\bud&=\bzero 
\hspace{1.7in}
\text{on $ (0,T) \times \partial\D$}
\,.
\end{align}
\end{subequations}

\subsection{Formal energy estimates for (P$_\delta$)}

In this section, we derive \textit{formal} energy estimates, see e.g.\  
\eqref{eq:estimate-Pd} below, 
where we will assume that the triple $(\bud,\prd,\strd)$, 
which is a solution to problem $\brk{\rm P_{\delta}}$,  
has sufficient regularity for all the subsequent manipulations.

We will assume throughout that 
\begin{align}
&\f \in L^2\brk{0,T;[H^{-1}(\D)]^d}, \quad 
\bu^0 \in [L^2(\D)]^d,\quad
\mbox{and}\quad \strs^0 \in [L^\infty(\D)]^{d \times d}_{SPD} 
\nonumber \\
&\mbox{with}
\quad 
\sigma_{\rm min}^0\, \|\bxi\|^2 \leq {\bxi}^T \strs^0(\xx) \,{\bxi} 
\leq \sigma_{\rm max}^0\, \|\bxi\|^2
\quad \forall \bxi \in {\mathbb R}^d
\quad \mbox{for } {a.e.} \ \xx  \mbox{ in }\D;
\label{freg}
\end{align}
where 
$\sigma_{\rm min}^0,\,\sigma_{\rm max}^0 \in \R_{>0}$.

Let $\Fd(\bud,\strd)$ denote the free energy of the solution 
$(\bud,\prd,\strd)$ 
to problem $(\rm P_{\delta})$, where $\Fd(\cdot,\cdot): \U \times \S 
\rightarrow \R$ is defined as  
\beq \label{eq:free-energy-Pd}
\Fd(\bv,\bphi) := \frac{\Re}{2}\intd\|\bv\|^2 
+ \frac{\e}{2\Wi}\intd\tr(\bphi-\Gd(\bphi)-\I) \,.
\eeq
Here
the first term $\frac{\Re}{2}\intd\|\bv\|^2$ 
corresponds to the usual kinetic energy term, 
and the second term $\frac{\e}{2\Wi}\intd\tr(\bphi-\Gd(\bphi)-\I)$ is a 
regularized version of the 
relative entropy term $\frac{\e}{2\Wi}\intd\tr(\bphi-G(\bphi)-\I)$
introduced in Hu and Leli\`{e}vre,\cite{hu-lelievre} see also
Jourdain {\it et al.}\cite{jourdain-lebris-lelievre-otto:2006}.

\begin{proposition} \label{prop:free-energy-Pd}
Let $(\bud,\prd,\strd)$  
be a sufficiently smooth 
solution to problem $(\rm P_{\delta})$.
Then the free energy $\Fd(\bud,\strd)$  
satisfies for a.e.\ $t \in (0,T)$
\begin{align} 
\nonumber
&\deriv{}{t}\Fd(\bud,\strd)
+(1-\e)\intd\|\gbud\|^2
+ \displaystyle \frac{\e}{2 {\rm Wi}^2}\intd\tr(\Bd(\strd)
+[\Bd(\strd)]^{-1}
-2\I)
\\
&\hspace{3.5in}
\leq 
\langle \f, \bud\rangle_{H^1_0(\D)}\,.
\label{eq:estimate-Pd}
\end{align}
\end{proposition}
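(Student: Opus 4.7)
The plan is the classical free-energy (relative entropy) method: test the momentum equation with $\bud$ and the stress equation with the tensor-valued multiplier $\frac{\e}{2\Wi}(\I-\Gd'(\strd))$, then add the two identities so that the cross terms coupling velocity gradient and stress cancel.

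First I would multiply \eqref{eq:oldroyd-b-sigmadL} by $\bud$, integrate over $\D$, and use the incompressibility constraint \eqref{eq:oldroyd-b-sigma1dL} together with the boundary condition \eqref{eq:dirichletdL}: the pressure term vanishes, the convective term $\Re\,(\bud\cdot\grad)\bud\cdot\bud$ integrates to zero, and integration by parts of the viscous and elastic divergence terms yields
\[
\textstyle \frac{\Re}{2}\deriv{}{t}\intd\|\bud\|^2 + (1-\e)\intd\|\gbud\|^2 + \frac{\e}{\Wi}\intd \Bd(\strd):\gbud = \langle \f,\bud\rangle_{H^1_0(\D)}.
\]

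Second, I would contract \eqref{eq:oldroyd-b-sigma2dL} with $\frac{\e}{2\Wi}(\I-\Gd'(\strd))$ and integrate. For the time derivative, \eqref{eq:deriv-tensor-g} applied to $g=\mathrm{id}-\Gd$ gives $\pd{\strd}{t}:(\I-\Gd'(\strd))=\deriv{}{t}\tr(\strd-\Gd(\strd))$. For the advection, the same identity (now for spatial derivatives) plus $\div\bud=0$ and $\bud|_{\partial\D}=\bzero$ force $\intd (\bud\cdot\grad)\strd:(\I-\Gd'(\strd))=\intd \bud\cdot\grad\,\tr(\strd-\Gd(\strd))=0$. For the upper-convected terms, since $\Bd(\strd)$ and $\Gd'(\strd)$ are functions of the same symmetric matrix they commute, and by \eqref{eq:inverse-Gd} we have $\Bd(\strd)(\I-\Gd'(\strd))=\Bd(\strd)-\I$; combined with \eqref{eq:symmetric-tr} and $\gbud:\I=\div\bud=0$, the two terms contribute
\[
-[(\gbud)\Bd(\strd)+\Bd(\strd)(\gbud)^T]:(\I-\Gd'(\strd)) = -2\,(\gbud):\Bd(\strd).
\]

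The main obstacle is the relaxation term $\frac{1}{\Wi}(\strd-\I):(\I-\Gd'(\strd))$, which must be shown to dominate the positive dissipation $\tr(\Bd(\strd)+[\Bd(\strd)]^{-1}-2\I)$. I would split $\strd-\I=(\strd-\Bd(\strd))+(\Bd(\strd)-\I)$: the first piece is non-negative after pairing, by inequality \eqref{expositive}; the second is handled by a direct computation using $\Gd'(\strd)=[\Bd(\strd)]^{-1}$ from \eqref{eq:inverse-Gd}, giving
\[
(\Bd(\strd)-\I):(\I-[\Bd(\strd)]^{-1}) = \tr\!\brk{\Bd(\strd)+[\Bd(\strd)]^{-1}-2\I},
\]
which is itself non-negative by \eqref{eq:positive-term}. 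Altogether the stress identity becomes
\[
\textstyle \frac{\e}{2\Wi}\deriv{}{t}\intd\tr(\strd-\Gd(\strd)-\I) -\frac{\e}{\Wi}\intd (\gbud):\Bd(\strd) + \frac{\e}{2\Wi^2}\intd \tr(\Bd(\strd)+[\Bd(\strd)]^{-1}-2\I) \leq 0.
\]

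Finally, I would add the momentum and stress identities. The terms $\pm\frac{\e}{\Wi}\intd (\gbud):\Bd(\strd)$ cancel, and recognizing the remaining time-derivative contributions as $\deriv{}{t}\Fd(\bud,\strd)$ from \eqref{eq:free-energy-Pd} yields \eqref{eq:estimate-Pd}. The whole computation is formal, as tacitly assumed, because it presupposes enough regularity on $(\bud,\prd,\strd)$ to justify the chain rule \eqref{eq:deriv-tensor-g} pointwise in time and the integrations by parts in space.
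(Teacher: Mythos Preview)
Your proof is correct and follows essentially the same approach as the paper: you test the momentum equation with $\bud$ and the stress equation with $\frac{\e}{2\Wi}(\I-\Gd'(\strd))$, treat the advection via \eqref{eq:deriv-tensor-g}, handle the upper-convected terms using \eqref{eq:inverse-Gd} and incompressibility so that they cancel the elastic term from the momentum equation, and split $\strd-\I=(\strd-\Bd(\strd))+(\Bd(\strd)-\I)$ in the relaxation term exactly as the paper does in \eqref{remterm}.
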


\begin{proof} 
Multiplying the Navier-Stokes equation with $\bud$
and the stress equation with $\frac{\e}{2\Wi}(\I-\Gd'(\strd))$,
summing and integrating over $\D$ yields,
after 
using integrations by parts and the incompressibility property in the standard
way, 
that
\begin{align}
\nonumber
&\intd  \Brk{ \frac{\Re}{2} 
\pd{}{t}\|\bud\|^2 + (1-\e)\|\gbud\|^2 +\frac{\e}{\Wi}\Bd(\strd):\gbud }
\\ 
& \hspace{0.2in}  + \frac{\e}{2\Wi} \intd \Bigg[ \brk{\pd{}{t}\strd+(\bud\cdot\grad)\strd}
 + \frac{1}{\Wi}\brk{\strd-\I} \Bigg] :\brk{\I-\Gd'(\strd)} 
\nonumber
\\
&\hspace{0.2in}  - \frac{\e}{2\Wi} \intd 
 \brk{ \brk{\gbud}\Bd(\strd) + \Bd(\strd)\brk{\gbud}^T } 
 :\brk{\I-\Gd'(\strd)} 
= 
\langle \f, \bud\rangle_{H^1_0(\D)}\,.
\label{Pdener}
\end{align}
Using (\ref{eq:deriv-tensor-g}) and its spatial counterpart, 
we first note that
\begin{align}
&\brk{\pd{}{t}\strd+(\bud\cdot\grad)\strd}:\brk{\I-\Gd'(\strd)} 
= \brk{\pd{}{t}+(\bud\cdot\grad)}\tr\brk{\strd-\Gd(\strd)} \,.
\label{Pdener1}
\end{align}
On integrating over $\D$, the $(\bud \cdot \grad)$ part of this term vanishes
as $\bud(t,\cdot) \in \Uz$.  
On using trace properties,
\eqref{eq:inverse-Gd} and the incompressibility property, we obtain that
\begin{align}
\nonumber
\brk{\brk{\gbud}\Bd(\strd)}:\brk{\I-\Gd'(\strd)} 
&= \tr\brk{ \brk{\gbud}\Bd(\strd) - \brk{\gbud}\Bd(\strd)
\Gd'(\strd) } \,,
\\
& = \tr\brk{ \brk{\gbud}\Bd(\strd) - \gbud } \,,
\nonumber
\\
& = \tr\brk{ \brk{\gbud}\Bd(\strd) } - \div\bud \,,
\nonumber
\\
& = \tr\brk{ \brk{\gbud}\Bd(\strd) } \,.
\label{Pdener2}
\end{align}
On noting (\ref{eq:symmetric-tr}), we have also that
\begin{align}
\label{Pdener3}
\brk{\Bd(\strd)\brk{\gbud}^T }:\brk{\I-\Gd'(\strd)} 
= \tr\brk{ \brk{\gbud}\Bd(\strd) } \,.
\end{align}
Therefore the terms involving the left-hand sides of (\ref{Pdener2})
and (\ref{Pdener3}) in (\ref{Pdener})  
cancel with the term $\frac{\e}{\Wi}\Bd(\strd):\gbud$
in (\ref{Pdener})  
arising from the Navier-Stokes equation. 
Finally, for the remaining term we have on noting (\ref{ipmat}) and
 (\ref{eq:inverse-Gd},d) that
\begin{align}
\brk{\strd-\I}:
\brk{\I-\Gd'(\strd)}
&= [(\strd-\Bd(\strd)) + \brk{\Bd(\strd)-\I}] :
\brk{\I-\Gd'(\strd)} 
\nonumber
\\
&\geq
\brk{\Bd(\strd)-\I} :
\brk{\I-\Gd'(\strd)}
\nonumber \\
&=\tr(\Bd(\strd)+[\Bd(\strd)]^{-1}-2\I)\,.  
\label{remterm}
\end{align}
Hence we obtain the 
desired free energy inequality  ~\eqref{eq:estimate-Pd}.
\end{proof}

\begin{corollary} 
\label{cor:free-energy-Pd}
Let $(\bud,\prd,\strd)$  
be a sufficiently smooth solution to problem $(\rm P_{\delta})$. 
Then it follows that
\begin{align}
\nonumber
&\sup_{t \in (0,T)}\Fd(\bud(t,\cdot),\strd(t,\cdot))
\\
& \hspace{0.5in}+\int_{\D_T} \Brk{ 
\frac{1}{2}(1-\e)
\|\gbud\|^2
+\frac{\e}{2{\rm Wi}^2}\tr(\Bd(\strd)+[\Bd(\strd)]^{-1}-2\I) }
\nonumber 
\\
& \hspace{1.5in}
\le 2\brk{ \Fd(\bu^0,\strs^0) + \frac{1+C_P}{2(1-\e)} \Norm{\f}_{L^2(0,T;H^{-1}(\D))}^2 }\,.
 \label{eq:free-energy-bound}
\end{align}
\end{corollary}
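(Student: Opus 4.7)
The plan is to deduce the bound by integrating the differential inequality~\eqref{eq:estimate-Pd} of Proposition~\ref{prop:free-energy-Pd} in time, absorbing the forcing term on the right-hand side into the dissipation on the left using a Cauchy--Schwarz/Young's inequality argument together with Poincar\'e, and then exploiting the nonnegativity (from Lemma~\ref{GLemma}) of the integrand and of the regularized free energy $\Fd$ itself to separate the pointwise-in-$t$ and integrated-in-$t$ contributions. The factor of $2$ appearing on the right of~\eqref{eq:free-energy-bound} will arise precisely because these two contributions are then each bounded by the same quantity and added.

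Fix any $t^\star \in (0,T)$ and integrate~\eqref{eq:estimate-Pd} from $0$ to $t^\star$. For the duality pairing on the right, I would bound
\begin{align*}
\int_0^{t^\star} \langle \f, \bud\rangle_{H^1_0(\D)}\, ds
&\leq \int_0^{t^\star} \Norm{\f}_{H^{-1}(\D)} \Norm{\bud}_{H^1(\D)} \, ds \\
&\leq \sqrt{1+C_P}\, \int_0^{t^\star} \Norm{\f}_{H^{-1}(\D)} \Norm{\gbud}_{L^2(\D)} \, ds \\
&\leq \frac{1+C_P}{2(1-\e)}\,\Norm{\f}_{L^2(0,T;H^{-1}(\D))}^2
+ \frac{1-\e}{2}\int_0^{t^\star}\!\!\intd \|\gbud\|^2,
\end{align*}
using the Poincar\'e inequality~\eqref{eq:poincare} to control $\Norm{\bud}_{H^1(\D)}^2 \leq (1+C_P)\Norm{\gbud}_{L^2(\D)}^2$, followed by Young's inequality calibrated to split off exactly half of the viscous dissipation. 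Subtracting this half into the left-hand side leaves
\begin{align*}
&\Fd(\bud(t^\star,\cdot),\strd(t^\star,\cdot))
+ \int_0^{t^\star}\!\!\intd \Brk{\tfrac{1}{2}(1-\e)\|\gbud\|^2
+ \tfrac{\e}{2\Wi^2}\tr(\Bd(\strd)+[\Bd(\strd)]^{-1}-2\I)} \\
&\hspace{2.5in}\leq \Fd(\bu^0,\strs^0) + \tfrac{1+C_P}{2(1-\e)}\Norm{\f}_{L^2(0,T;H^{-1}(\D))}^2.
\end{align*}

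To conclude, I would use the nonnegativity of each of the two contributions on the left separately: the dissipative integrand is nonnegative by~\eqref{eq:positive-term}, and $\Fd \ge 0$ by~\eqref{Entropy1}. Thus, dropping the time integral and taking the supremum over $t^\star \in (0,T)$ on one hand, and taking $t^\star \to T$ on the other, yields two inequalities, each with right-hand side equal to $\Fd(\bu^0,\strs^0)+\frac{1+C_P}{2(1-\e)}\Norm{\f}_{L^2(0,T;H^{-1}(\D))}^2$; summing them produces precisely~\eqref{eq:free-energy-bound}. There is no real obstacle here beyond the bookkeeping: the whole argument rests on the inequality~\eqref{eq:estimate-Pd}, on the sign information from Lemma~\ref{GLemma}, and on a standard Young/Poincar\'e absorption that has been tuned so that the coefficient $\frac{1-\e}{2}$ of $\|\gbud\|^2$ persists on the left.
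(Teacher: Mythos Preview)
Your proof is correct and follows essentially the same approach as the paper: bound the duality pairing $\langle \f,\bud\rangle_{H^1_0(\D)}$ via Cauchy--Schwarz, Poincar\'e, and Young's inequality with weight $\nu^2=(1-\e)/(1+C_P)$ so as to absorb exactly half of the viscous dissipation, then integrate in time. You are simply more explicit than the paper about how the factor of $2$ arises, by invoking the nonnegativity of $\Fd$ (via~\eqref{Entropy1}) and of the dissipative integrand (via~\eqref{eq:positive-term}) to separate the $\sup_t$ and time-integrated contributions before summing.
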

\begin{proof}
Smooth solutions $(\bud,\prd,\strd)$      
of $(\rm P_{\delta})$ 
satisfy the free energy estimate~\eqref{eq:free-energy-Pd}. 
One can bound the term 
$\langle \f, \bud\rangle_{H^1_0(\D)}$
there,
using the Cauchy-Schwarz and Young inequalities 
for $\nu\in\R_{>0}$,
and the Poincar\'e inequality
(\ref{eq:poincare}),  by
\begin{align}
\nonumber
\langle \f, \bud\rangle_{H^1_0(\D)}
&\le \Norm{\f}_{H^{-1}(\D)} \Norm{\bud}_{H^1(\D)} 
\le \frac{1}{2\nu^2} \Norm{\f}_{H^{-1}(\D)}^2 + \frac{\nu^2}{2} \Norm{\bud}_{H^1(\D)}^2 
\\
& \le 
\frac{1}{2\nu^2} \Norm{\f}_{H^{-1}(\D)}^2 + \frac{\nu^2}{2} 
(1+C_P)
\Norm{\gbud}_{L^2(\D)}^2 
\,. 
\label{fbound}
\end{align} 
Combining (\ref{fbound}) and (\ref{eq:free-energy-Pd}) with 
$\nu^2= (1-\e)/(1+C_P)$, and integrating in time yields the desired 
result (\ref{eq:free-energy-bound}). 
\end{proof}

We note that the right-hand side of (\ref{eq:free-energy-bound}) is independent
of the regularization parameter $\delta$ if $\strs^0$ is positive definite.

\section{Finite element approximation of (P$_\delta$) and (P)}
\label{sec:deltah}

\subsection{Finite element discretization}
\label{FEd}
We now introduce a finite element discretization   
of the problem $\brk{\rm P_\delta}$,
which satisfies a discrete analogue of 
(\ref{eq:estimate-Pd}).

The time interval $[0,T)$ is split into intervals $[t^{n-1},t^n)$ 
with $\dt_{n} = t^{n}-t^{n-1}$, $n=1, \ldots, N_T$. We set
$\dt:= \max_{n=1, \ldots, N_T} \dt_n$.
We will assume throughout that the domain $\D$ is a polytope.
We define a regular family of meshes $\{\mathcal{T}_h\}_{h>0}$ with 
discretization parameter $h>0$,
which is built from partitionings  
of the domain $\D$ into regular open simplices  
so that 
$$ 
\overline{\D} = \mathcal{T}_h := \mathop{\cup}_{k=1}^{N_K} \overline{K_k}\qquad
\mbox{with} \qquad \max_{k=1,\ldots,N_K} \frac{h_{k}}{\rho_{k}} \leq C \,. $$
Here $\rho_{k}$ is the diameter of the largest inscribed ball contained in 
the simplex $K_k$ and $h_{k}$ is the diameter of $K_k$, 
so that $h = \max_{k=1,\ldots,N_K} h_{k}$. 
For each element $K_k$, $k=1,\ldots, N_K$, 
of the mesh $\mathcal{T}_h$ let $\{P^k_i\}_{i=0}^d$ denotes its vertices,
and $\{\bn^k_i\}_{i=0}^d$ the outward unit normals of the edges $(d=2)$ or faces $(d=3)$
with $\bn^k_i$ being that of the edge/face opposite vertex $P^k_i$, $i=0,\ldots,d$.
In addition, let $\{\eta^k_i(\xx)\}_{i=0}^d$ denote the barycentric coordinates 
of $\xx \in K_k$ with respect to the vertices $\{P^k_i\}_{i=0}^d$;
that is, $\eta^k_i \in \PP_1$ and $\eta^k_i(P^k_j)=\delta_{ij}$,
$i,\,j =0 ,\ldots,d$.  
Here $\PP_m$ denote polynomials of maximal degree $m$ in $\xx$,
and $\delta_{ij}$ the Kronecker delta notation.
Finally, we introduce $\partial\mathcal{T}_h := \{E_j\}_{j=1}^{N_E}$ 
as the set of internal edges $E_j$ of triangles in the mesh $\mathcal{T}_h$ when $d=2$, 
or the set of internal faces $E_j$ of tetrahedra when $d=3$.

We approximate the problem $(\rm P_\delta)$ by the problem $(\mathrm{P}_{\delta,h}^{\dt})$
based on the finite element spaces $\Uh^0 \times \mathrm{Q}_h^0 \times \S_h^0$.
As is standard, we require the discrete velocity-pressure spaces 
$\Uh^0\times\mathrm{Q}_h^0 \subset \U \times {\rm Q}$  
 satisfy the discrete Ladyshenskaya-Babu\v{s}ka-Brezzi (LBB) inf-sup condition
 \beq \label{eq:LBB}
 \inf_{q\in\mathrm{Q}_h^0} \sup_{\bv\in\Uh^0} 
 \frac{\displaystyle \int_{\D} q\,\div\bv }{\Norm{q}_{L^2(\D)}\Norm{\bv}_{H^1(\D)}} 
 \geq \mu_{\star} 
 > 0 \,,
 \eeq 
 see e.g.\ p114 in Girault and Raviart.\cite{GiraultRaviart}
 In the following, we set
 \begin{subequations}
 \begin{align}
 \Uh^0&:=\Uh^2  \subset \U \quad \mbox{or} \quad \Uh^{2,-}
 \subset \U
 \label{Vh}\,,\\
 \mathrm{Q}_h^0 &:=\{q \in {\rm Q} \,:\, q \mid_{K_k} \in \PP_0 \quad
 k=1,\ldots, N_K\} \subset {\rm Q}\,,  
 \label{Qh}\\
\mbox{and} \qquad \S_h^0 &:=\{\bphi \in \S \,:\, \bphi \mid_{K_k} 
\in [\PP_0]^{d \times d}_S \quad
 k=1,\ldots, N_K\} \subset \S\,; 
 \label{Sh}
 \end{align}
 \end{subequations}
 where 
 \begin{subequations}
 \begin{align}
 \Uh^2&:=\{\bv \in [C(\overline{\D})]^d\cap \U \,:\, \bv \mid_{K_k} \in [\PP_2]^d \quad
 k=1,\ldots, N_K\} \label{Vh2}\,,\\
  \Uh^{2,-}&:=\{\bv \in [C(\overline{\D})]^d\cap \U \,:\, \bv \mid_{K_k} \in [\PP_1]^d 
  \oplus \mbox{span} \{\bvarsigma^{k}_i\}_{i=0}^d  \quad
 k=1,\ldots, N_K\}\label{Vh2m}\,.
 \end{align} 
\end{subequations} 
Here, for $k=1,\ldots, N_K$ and $i=0,\ldots,d$
\begin{align}
\bvarsigma^{k}_i(\xx) = \bn^{k}_i \prod_{j=0, j\neq i}^d 
\eta^{k}_j(\xx) \qquad \mbox{for } \xx \in K_k\,.
\label{bvarsig}
\end{align}
We introduce also  
$$ \Vhzero := \BRK{\bv \in\Uh^0 \,:\, \int_{\D} q \, {\div\bv} = 0 \quad  
\forall q\in\mathrm{Q}_h^0} \,, $$
which approximates $\Uz$.
 It is well-known the choices (\ref{Vh},b)   
 satisfies (\ref{eq:LBB}), see e.g.\ p221 in Brezzi and Fortin\cite{Brezzi-Fortin} 
 for $\Uh^0 =\Uh^2$, and Chapter II, Sections 2.1 ($d=2$) and 2.3 
 ($d=3$) in Girault and Raviart\cite{GiraultRaviart}
 for $\Uh^0 =\Uh^{2,-}$. 
 Moreover, these particular choices of $\S_h^0$ and $\mathrm{Q}_h^0$
 have the desirable property   
 that
 \beq \label{eq:inclusion1}
 \bphi \in \Sh^0 \qquad \Rightarrow \qquad 
 \I-\Gd'(\bphi) \in \Sh^0 \qquad \mbox{and} \qquad
 \tr\brk{ \bphi-\Gd(\bphi)-\I } \in \mathrm{Q}_h^0 \,,
 \eeq
which makes it a straightforward matter to mimic the free
energy inequality (\ref{eq:estimate-Pd}) at a discrete level.    
Since  
$\Sh^0$ is discontinuous,
we will use the discontinuous Galerkin method to approximate the advection term 
$(\bud\cdot\nabla)\strd$ in the following. 
Then, for the boundary integrals, we will make use of the following definitions
(see e.g.\ 
p267 in Ern and Guermond\cite{ern-guermond:2004}).
Given $\bv \in \Uh^0$,  
then for any $\bphi \in \Sh^0$ (or ${\rm Q}_h^0$) 
and for any point $\xx$ that is in the interior of some $E_j \in \partial\mathcal{T}_h$,
we define the downstream and upstream values of $\bphi$ at $\xx$ by
\begin{align}\label{eq:streams}
\bphi^{+\bv}(\xx) = \lim_{\rho \rightarrow 0^+} \bphi(\xx+\rho\,\bv(\xx)) \qquad
\mbox{and} \qquad 
\bphi^{-\bv}(\xx) = \lim_{\rho \rightarrow 0^-} \bphi(\xx+\rho\,\bv(\xx)) \,;
\end{align}
respectively. 
In addition, we denote by
\begin{align}
\label{eq:difsum}
\jump{\bphi}_{\to\bv}(\xx) = \bphi^{+\bv}(\xx) - \bphi^{-\bv}(\xx) 
\qquad \mbox{and}
\qquad  
\BRK{\bphi}^{\bv}(\xx) = \frac{\bphi^{+\bv}(\xx) + \bphi^{-\bv}(\xx)}{2}\,,
\end{align}
the jump and mean value, respectively, of $\bphi$ 
at the point $\xx$ of boundary $E_j$.
From 
(\ref{eq:streams}), 
it is clear that the values of $\bphi^{+\bv}|_{E_j}$ and $\bphi^{-\bv}|_{E_j}$
can change along $E_j \in \partial\mathcal{T}_h$. 
Finally, it is easily deduced that
\begin{align}\nonumber
\sum_{j=1}^{N_E} \int_{E_j} |\bv\cdot\bn| \jump{q_1}_{\to\bv} \,q_2^{+\bv}
&= -\sum_{k=1}^{N_K} \int_{\partial K_k} \brk{\bv \cdot \bn_{K_k}} q_1\,q_2^{+\bv}
\\
& \hspace{1in}
\qquad \forall \bv \in \Uh^0, \ q_1,\,q_2 \in {\rm Q}_h^0\,;
\label{eq:jump}
\end{align}
where $\bn\equiv\bn(E_j)$ is a unit normal to $E_j$, 
whose sign is of no importance,
and $\bn_{K_k}$ is the outward unit normal vector 
of boundary $\partial K_k$ of $K_k$. 
We note that similar ideas appear in upwind schemes; 
e.g.\
see Chapter IV, Section 5 in Girault and Raviart\cite{GiraultRaviart}
for the Navier-Stokes equations.

\subsection{A free energy preserving approximation, (P$^{\Delta t}_{\delta,h}$), of (P$_\delta$)}

For any source term $\f\in L^2\brk{0,T;[H^{-1}(\D)]^d}$, we define 
the following piecewise constant function with respect to the time variable
\beq
\label{eq:constant-source}
\f^{\Delta t,+}(t,\cdot)=\f^n(\cdot) 
:=\frac{1}{\dt_n} \int_{t^{n-1}}^{t^{n}} \f(t,\cdot)\, dt,
\qquad t\in [t^{n-1},t^n), \qquad n=1,\dots,N_T  
\,. 
\eeq
It is easily deduced that
\begin{subequations}
\begin{align}
&\sum_{n=1}^{N_T} \dt_n\,\|\f^n\|_{H^{-1}(\D)}^r 
\leq \int_{0}^T \|f(t,\cdot)\|_{H^{-1}(\D)}^r dt
\qquad \mbox{for any } r \in [1,2]\,,
\label{fncont}\\
\mbox{and} \qquad
& \f^{\Delta t,+} \rightarrow \f \quad \mbox{strongly in }
L^2(0,T;[H^{-1}(\D)]^d) \mbox{ as } \dt \rightarrow 0_+\,.
\label{fnconv}
\end{align}
\end{subequations}

Throughout this section we choose $\buh^0 \in \Vhzero$
to be a suitable approximation of $\bu^0$ such as the $L^2$
projection of $\bu^0$ onto $\Vhzero$. We will also choose
$\strh^0 \in \Sh^0$ to be the $L^2$
projection of $\strs^0$ onto $\Sh^0$. 
Hence for $k=1,\ldots,N_K$
\begin{subequations}
\begin{align}
\strh^0 \mid_{K_k} = \frac{1}{|K_k|} \int_{K_k} \strs^0
\,,
\label{strh0def}
\end{align}
where $|K_k|$
is the measure of $K_k$;
and it immediately follows from (\ref{freg}) that
\begin{align}
\sigma_{\rm min}^0\, \|\bxi\|^2 \leq {\bxi}^T \strh^0\mid_{K_k} \,{\bxi} 
\leq \sigma_{\rm max}^0\, \|\bxi\|^2
\qquad \forall \bxi \in {\mathbb R}^d\,.
\label{strh0pd}
\end{align}
\end{subequations}

Our approximation $(\mathrm{P}_{\delta,h}^{\dt})$ of $\brk{\mathrm{P}_\delta}$
is then:

({\bf P}$_{\delta,h}^{\Delta t}$) 
Setting
$(\buhd^0,\strhd^0)=(\buh^0,\strh^0) \in\Vhzero\times\Sh^0$,
then for $n = 1, \ldots, N_T$ 
find $(\buhd^{n},\strhd^{n})\in\Vhzero\times\Sh^0$ such that for any test functions 
$(\bv,\bphi)\in\Vhzero\times\Sh^0$
\begin{subequations}
\begin{align} 
\nonumber 
&\int_\D \biggl[ \Re\left(\frac{\buhd^{n}-\buhd^{n-1}}{\dt_{n}}\right)\cdot \bv 
 + \frac{\Re}{2}\Brk{ \left( (\buhd^{n-1}\cdot\nabla)\buhd^{n}\right) \cdot \bv - 
 \buhd^{n} \cdot \left( (\buhd^{n-1}\cdot\nabla)\bv \right)}
\\
& \hspace{0.85in}
 + (1-\e) \gbuhd^{n}:\grad\bv + \frac{\e}{\Wi} \Bd(\strhd^{n}) : \grad\bv 
\biggr] = 
\langle \f^n, \bv\rangle_{H^1_0(\D)}\,,
\label{eq:Pdh}
\\ 
\nonumber
&\int_\D \left[ \left(\frac{\strhd^{n}-\strhd^{n-1}}{\dt_{n}}\right) : \bphi 
 - 2\left( (\gbuhd^{n})\,\Bd(\strhd^{n})\right) :\bphi 
 + \frac{1}{\Wi}\left(\strhd^{n}-\I\right): \bphi 
\right]
\\
& \hspace{0.85in}
 + \sum_{j=1}^{N_E} \int_{E_j}  \Abs{\buhd^{n-1}\cdot\bn} 
 \jump{\strhd^{n}}_{\to\buhd^{n-1}}:\bphi^{+\buhd^{n-1}}
 = 0 \,.
\label{eq:Pdhb}
\end{align}
\end{subequations}

In deriving $(\mathrm{P}_{\delta,h}^{\dt})$, we have noted 
(\ref{eq:symmetric-tr}) and 
that 
\begin{align}
\int_\D \bv \cdot [(\bz \cdot\nabla)\bw]= - 
 \int_\D \bw \cdot [(\bz\cdot\nabla)\bv] 
\qquad 
\forall \bz \in \Uz, \qquad \forall \bv, \bw \in [H^1(\D)]^d
\,.
\label{conv0c}
\end{align}
Once again we refer to p267 in Ern and Guermond\cite{ern-guermond:2004}
for the consistency of our stated approximation of the stress convection term,
see also Boyaval {\it et al.}\cite{boyaval-lelievre-mangoubi-09}.

Before proving existence of a solution to (P$^{\Delta t}_{\delta,h}$),
we first derive a discrete 
analogue of the energy estimate (\ref{eq:estimate-Pd}) for
(P$^{\Delta t}_{\delta,h}$); which uses the elementary equality
\begin{align}
2s_1 (s_1-s_2)=s_1^2-s_2^2 +(s_1-s_2)^2 \qquad \forall s_1,s_2 \in \R.
\label{elemident}
\end{align}

\subsection{Energy bound for (P$^{\Delta t}_{\delta,h}$)}

\begin{proposition} \label{prop:free-energy-Pdh} 
For $n= 1, \dots, N_T$, a solution $\brk{\buhd^{n},\strhd^{n}}
\in \Vhzero\times\Sh^0$ to (\ref{eq:Pdh},b), if it exists, satisfies 
\begin{align}\nonumber
&\frac{F_\delta(\buhd^{n},\strhd^{n})-F_\delta(\buhd^{n-1},\strhd^{n-1})}{\dt_{n}} 
+ \frac{ {\rm Re}}{2\dt_{n}}\intd\|\buhd^{n}-\buhd^{n-1}\|^2
+(1-\e)\intd\|\gbuhd^{n}\|^2
\\
& \hspace{1in} +\frac{\e}{2{\rm Wi}^2}\intd\tr(\Bd(\strhd^{n})+[\Bd(\strhd^{n})]^{-1}-2\I)
\nonumber \\
& \qquad
\le
\langle \f^n, \buhd^n \rangle_{H^1_0(\D)}
\le 
\frac{1}{2}(1-\e)\intd\|\gbuhd^{n}\|^2
+ \frac{1+C_P}{2(1-\e)} \|\f^{n}\|_{H^{-1}(\D)}^2
\,.
 \label{eq:estimate-Pdh}
\end{align}
\end{proposition}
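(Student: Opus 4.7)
The plan is to mirror, at the discrete level, the formal computation of Proposition~\ref{prop:free-energy-Pd}. I would choose the test functions $\bv = \buhd^n \in \Vhzero$ in (\ref{eq:Pdh}) and $\bphi = \frac{\e}{2\Wi}(\I - \Gd'(\strhd^n))$ in (\ref{eq:Pdhb}); the latter is an admissible element of $\Sh^0$ thanks to (\ref{eq:inclusion1}). Adding the two resulting identities, the velocity--stress coupling should cancel exactly as in the continuous case (\ref{Pdener2}--\ref{Pdener3}): using (\ref{eq:symmetric-tr}) and (\ref{eq:inverse-Gd}) one rewrites $-(\gbuhd^n\,\Bd(\strhd^n)):(\I-\Gd'(\strhd^n)) = -\Bd(\strhd^n):\gbuhd^n + \div\buhd^n$, so that the $\Bd(\strhd^n):\gbuhd^n$ contributions cancel and only a $\frac{\e}{\Wi}\int_\D\div\buhd^n$ remains, which vanishes because $1\in\mathrm{Q}_h^0$ and $\buhd^n\in\Vhzero$. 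The skew-symmetric form of the velocity convection likewise makes its pairing with $\buhd^n$ vanish.

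The time-accumulation terms are then treated separately. The elementary identity (\ref{elemident}) produces the kinetic-energy difference plus the numerical dissipation $\frac{\Re}{2\dt_n}\int_\D\|\buhd^n-\buhd^{n-1}\|^2$. The second inequality in (\ref{gconcave2}), applied with $\bphi = \strhd^n$ and $\bpsi = \strhd^{n-1}$, yields
\[
(\strhd^n-\strhd^{n-1}):\brk{\I-\Gd'(\strhd^n)} \geq \tr\Brk{\brk{\strhd^n-\Gd(\strhd^n)} - \brk{\strhd^{n-1}-\Gd(\strhd^{n-1})}},
\]
which, scaled by $\frac{\e}{2\Wi\dt_n}$, is precisely the forward difference of the entropy component of $F_\delta$. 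The reaction term is handled as in (\ref{remterm}): splitting $\strhd^n-\I = (\strhd^n-\Bd(\strhd^n)) + (\Bd(\strhd^n)-\I)$, the first piece is non-negative by (\ref{expositive}), and the second combined with (\ref{eq:inverse-Gd}) produces the dissipation $\frac{\e}{2\Wi^2}\int_\D\tr(\Bd(\strhd^n)+[\Bd(\strhd^n)]^{-1}-2\I)$, itself non-negative by (\ref{eq:positive-term}).

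The main obstacle is the upwind jump term, which was absent from the continuous proof. It must be shown that
\[
\mathcal{J} := \sum_{j=1}^{N_E}\int_{E_j}|\buhd^{n-1}\cdot\bn|\,\jump{\strhd^n}_{\to\buhd^{n-1}}:\brk{\I-\Gd'(\strhd^n)}^{+\buhd^{n-1}} \geq 0.
\]
Because $\strhd^n$ is piecewise constant, its face traces $\strhd^{n,\pm\buhd^{n-1}}$ are constants; applying the second inequality in (\ref{gconcave2}) on each face with $\bphi$ the downstream trace and $\bpsi$ the upstream trace of $\strhd^n$ yields the pointwise lower bound $\jump{\strhd^n}_{\to\buhd^{n-1}}:\brk{\I-\Gd'(\strhd^n)}^{+\buhd^{n-1}} \geq \jump{\tr(\strhd^n-\Gd(\strhd^n))}_{\to\buhd^{n-1}}$. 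Since $q:=\tr(\strhd^n-\Gd(\strhd^n)) \in \mathrm{Q}_h^0$ by (\ref{eq:inclusion1}), the identity (\ref{eq:jump}) with $q_1 = q$ and $q_2 \equiv 1$ converts the resulting weighted sum of scalar jumps into $-\int_\D q\,\div\buhd^{n-1}$, which vanishes because $\buhd^{n-1}\in\Vhzero$. Hence $\mathcal{J}\geq 0$ and may be discarded in the energy balance, giving the first inequality of (\ref{eq:estimate-Pdh}); the second is the standard Cauchy--Schwarz/Young/Poincar\'e bound with $\nu^2=(1-\e)/(1+C_P)$, exactly as in (\ref{fbound}).
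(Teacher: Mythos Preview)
Your proof is correct and follows essentially the same approach as the paper's: the same choice of test functions, the same use of the concavity inequality (\ref{eq:concavity})/(\ref{gconcave2}) for both the discrete time derivative and the face jumps, and the same conversion of the resulting jump term via (\ref{eq:jump}) and the divergence theorem into $-\int_\D \tr(\strhd^n-\Gd(\strhd^n))\,\div\buhd^{n-1}=0$ (cf.\ (\ref{edgeterm})).
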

\begin{proof}
Similarly to the proof of  Proposition~\ref{prop:free-energy-Pd},
we choose as test functions $\bv=\buhd^{n}\in\Vhzero$ and 
$\bphi=\frac{\e}{2\Wi}\brk{\I-\Gd'(\strhd^{n})}\in\Sh^0$ in (\ref{eq:Pdh},b), 
and obtain, on noting (\ref{elemident}) and (\ref{eq:inverse-Gd},d), that
\begin{align}
\nonumber
&\langle \f^n, \buhd^n \rangle_{H^1_0(\D)}
\\
&\hspace{0.35in}\ge
\intd
\Brk{ \frac{\Re}{2} \brk{ \frac{\|\buhd^{n}\|^2-\|\buhd^{n-1}\|^2}{\dt_{n}} 
+ \frac{\|\buhd^{n}-\buhd^{n-1}\|^2}{\dt_{n}} } + (1-\e)\|\gbuhd^{n}\|^2 }
\nonumber
\\
&\hspace{0.55in} + \frac{\e}{2\Wi}  \intd 
\brk{ \frac{\strhd^{n}-\strhd^{n-1}}{\dt_{n}} }:\brk{\I-\Gd'(\strhd^{n})} 
\nonumber \\
& \hspace{0.55in}
+ \frac{\e}{2\Wi^2}  \intd 
\tr(\Bd(\strhd^{n})+[\Bd(\strhd^{n})]^{-1}-2\I) 
\nonumber
\\
&\hspace{0.55in} 
 +  \frac{\e}{2\Wi}\sum_{j=1}^{N_E} \int_{E_j} 
     \Brk{ \Abs{\buhd^{n-1}\cdot\bn} \jump{\strhd^{n}}_{\to\buhd^{n-1}}:
     \brk{\I-\Gd'(\strhd^{n})}^{+\buhd^{n-1}} } \,.
\label{eq:free-energy-Pdh-demo1}
\end{align}
We consequently obtain from \eqref{eq:free-energy-Pdh-demo1},
on noting (\ref{ipmat}) and (\ref{eq:concavity}) applied
to the edge terms as well as the discrete time derivative term for the stress variable, 
that
\begin{align} 
\nonumber 
&\langle \f^n, \buhd^n \rangle_{H^1_0(\D)}
\\
&\hspace{0.35in}\ge 
\nonumber 
\intd
 \Brk{\frac{\Re}{2}\brk{\frac{\|\buhd^{n}\|^2-\|\buhd^{n-1}\|^2}{\dt_{n}} 
  + \frac{\|\buhd^{n}-\buhd^{n-1}\|^2}{\dt_{n}}} +(1-\e)\|\gbuhd^{n}\|^2 }
\\ 
& \hspace{0.55in}   
+ \frac{\e}{2\Wi}  \intd
  \brk{\frac{\tr\brk{\strhd^{n}-\Gd(\strhd^{n})}
  -\tr\brk{\strhd^{n-1}-\Gd(\strhd^{n-1})}}{\dt_{n}}}
\nonumber
\\
& \hspace{0.55in} 
+ \frac{\e}{2\Wi^2}  \intd 
\tr(\Bd(\strhd^{n})+[\Bd(\strhd^{n})]^{-1}-2\I) 
\nonumber
\\
& \hspace{0.55in} +  \frac{\e}{2\Wi}  \sum_{j=1}^{N_E}\int_{E_j}  
 \Abs{\buhd^{n-1}\cdot\bn} \jump{ \tr\brk{\strhd^{n}-\Gd(\strhd^{n})} 
 }_{\to\buhd^{n-1}} \,.
\label{eq:free-energy-Pdh-demo2}
\end{align}
Finally, we note from (\ref{eq:jump}), (\ref{eq:inclusion1}) and as $\buhd^{n-1} 
\in \Vhzero$ that
\begin{align} 
&\sum_{j=1}^{N_E} \int_{E_j} \Abs{\buhd^{n-1}\cdot\bn} \jump{\tr \brk{ \strhd^{n}-
\Gd(\strhd^{n}) } }_{\to\buhd^{n-1}}
\nonumber
\\
& \hspace{1in}= 
 -\sum_{k=1}^{N_K} \int_{\partial K_k} \brk{\buhd^{n-1}\cdot\bn_{K_k}} 
 \tr\brk{\strhd^{n}
 -\Gd(\strhd^{n})} 
\nonumber
 \\
& \hspace{1in} = 
-\sum_{k=1}^{N_K} \int_{K_k} \div\brk{\buhd^{n-1}\tr\brk{\strhd^{n}-\Gd(\strhd^{n})}} 
\nonumber
\\
&\hspace{1in}= - 
\int_{\D} 
 \tr\brk{\strhd^{n}-\Gd(\strhd^{n})}
\div \buhd^{n-1} 
= 0 \,.
\label{edgeterm} 
\end{align}
Combining (\ref{eq:free-energy-Pdh-demo2}) and (\ref{edgeterm})
yields the first desired inequality in (\ref{eq:estimate-Pdh}).
The second inequality in (\ref{eq:estimate-Pdh}) follows immediately from (\ref{fbound})
with $\nu^2 = (1-\e)/(1+C_P)$. 
\end{proof}

\subsection{Existence of a solution to (P$^{\Delta t}_{\delta,h}$)}

\begin{proposition} 
\label{prop:existence-Pdh}
Given $(\buhd^{n-1},\strhd^{n-1}) \in \Vhzero \times \Sh^0$
and for any time step $\dt_{n} > 0$,
then there exists at least 
one solution $\brk{\buhd^{n},\strhd^{n}} \in \Vhzero\times\Sh^0$ 
to 
(\ref{eq:Pdh},b).
\end{proposition}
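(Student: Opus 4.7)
The plan is to apply Brouwer's fixed-point theorem in the finite-dimensional space $V := \Vhzero \times \Sh^0$, using the discrete free-energy estimate of Proposition~\ref{prop:free-energy-Pdh} as an a priori bound. First, I would recast~(\ref{eq:Pdh},\ref{eq:Pdhb}) as a zero-finding problem by defining a continuous operator $\mathcal{F}: V \to V$ via $L^2$-Riesz representation, whose pairing with any test pair $(\bv, \bphi) \in V$ equals the left-hand side of~(\ref{eq:Pdh}) minus $\langle \f^n, \bv\rangle_{H^1_0(\D)}$ plus $\tfrac{\e}{2\Wi}$ times the left-hand side of~(\ref{eq:Pdhb}). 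Zeros of $\mathcal{F}$ are then exactly the solutions of~(\ref{eq:Pdh},\ref{eq:Pdhb}). Continuity of $\mathcal{F}$ is immediate, since $V$ is finite-dimensional and $\beta_\delta(s) = \max\{s, \delta\}$ is globally Lipschitz, cf.~(\ref{eq:Bd}).

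The naive strategy of showing $(\mathcal{F}(\bu, \strs), (\bu, \strs))_V > 0$ on a large sphere cannot work directly, because testing~(\ref{eq:Pdhb}) with $\bphi = \strs$ produces the cubic-in-stress term $-2\int_\D (\gbu)\beta_\delta(\strs):\strs$, which cannot be absorbed by the available coercive terms. Natural coercivity instead emerges from the non-standard pairing with $(\bu, \tfrac{\e}{2\Wi}(\I - G_\delta'(\strs)))$ used in the proof of Proposition~\ref{prop:free-energy-Pdh}, where the cancellation $\beta_\delta(\strs) G_\delta'(\strs) = \I$ from~(\ref{eq:inverse-Gd}), combined with $\div\bu = 0$, eliminates the bad coupling.

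To bridge this gap, I would perform a homotopy/degree argument. Introduce the family $\{\mathcal{F}_\mu\}_{\mu \in [0,1]}$ obtained from~(\ref{eq:Pdh},\ref{eq:Pdhb}) by replacing the two stress-velocity coupling terms $\tfrac{\e}{\Wi}\beta_\delta(\strhd^n):\gbv$ and $2(\gbuhd^n)\beta_\delta(\strhd^n):\bphi$ by their $\mu$-scaled versions. Since both coupling terms carry the \emph{same} factor $\mu$, the cancellation computation in the proof of Proposition~\ref{prop:free-energy-Pdh} goes through unchanged for every $\mu$, so every zero of $\mathcal{F}_\mu$ satisfies exactly the same bound~(\ref{eq:estimate-Pdh}). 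Combined with the coercivity~(\ref{Entropy2}) of the free energy $F_\delta$, this yields a uniform estimate $\|(\bu, \strs)\|_V \leq R_0$ on all zeros of $\mathcal{F}_\mu$, independent of $\mu \in [0,1]$.

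At $\mu = 0$, the scheme decouples into two linear sub-problems for $\bu$ and $\strs$ separately, each with coercive bilinear form; hence $\mathcal{F}_0$ is an affine bijection, with Brouwer degree $\pm 1$ on any ball $B_R \subset V$ of radius $R > R_0$. By homotopy invariance of the Brouwer degree, which applies since the uniform a priori bound keeps zeros of $\mathcal{F}_\mu$ away from $\partial B_R$ along the homotopy, $\deg(\mathcal{F}_1, B_R, 0) = \pm 1 \neq 0$, so $\mathcal{F} = \mathcal{F}_1$ has a zero in $B_R$, yielding the desired solution $(\buhd^n, \strhd^n)$. The main obstacle is securing the uniform a priori bound along the homotopy; as indicated, this reduces to the observation that the cancellation in the free-energy identity of Proposition~\ref{prop:free-energy-Pdh} is preserved under uniform scaling of the two coupling terms, which is automatic from the algebraic identities~(\ref{eq:inverse-Gd}) and $\div \bu = 0$.
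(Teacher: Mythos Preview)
Your proposal is correct and reaches the goal, but the route differs from the paper's. The paper also defines a continuous $\mathcal{F}:V\to V$ via the $L^2$ inner product (without the extra $\tfrac{\e}{2\Wi}$ scaling on the stress block, which is immaterial), but instead of a homotopy it runs a direct Brouwer contradiction: assuming $\mathcal{F}$ has no zero in the ball $\mathcal{B}_\gamma$, it shows the retraction $\mathcal{G}_\gamma:=-\gamma\,\mathcal{F}/\|\mathcal{F}\|_\D$ has a fixed point $(\bw_\gamma,\bpsi_\gamma)$ on $\partial\mathcal{B}_\gamma$, and then evaluates the \emph{non-standard} pairing $\Scal{\mathcal{F}(\bw_\gamma,\bpsi_\gamma)}{(\bw_\gamma,\tfrac{\e}{2\Wi}(\I-G_\delta'(\bpsi_\gamma)))}_\D$ in two ways. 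On one hand, the energy inequality behind Proposition~\ref{prop:free-energy-Pdh} together with the coercivity of $F_\delta$ (via (\ref{Entropy2}) and a norm-equivalence on the fixed mesh) forces this pairing $\ge 0$ for large $\gamma$; on the other hand, the fixed-point relation rewrites it as a negative multiple of $\int_\D[\|\bw_\gamma\|^2+\tfrac{\e}{2\Wi}\bpsi_\gamma:(\I-G_\delta'(\bpsi_\gamma))]$, which is positive for large $\gamma$ by the second bound in (\ref{Entropy2}). The contradiction yields existence.

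What each buys: the paper's argument needs only Brouwer's fixed-point theorem and the clever choice of the entropy test function in the second slot; it never linearises or decouples. Your degree/homotopy argument is equally valid---the observation that scaling the two coupling terms by the \emph{same} $\mu$ preserves the cancellation in the free-energy identity is exactly right, the $L^1$ stress control from (\ref{Entropy2}) upgrades to $L^2$ on the fixed mesh, and at $\mu=0$ the decoupled velocity problem is linear and coercive (the convection uses $\buhd^{n-1}$ and is skew), while the stress problem is linear with the DG upwind term contributing $\tfrac12\sum_j\int_{E_j}|\buhd^{n-1}\!\cdot\bn|\,\|\jump{\strhd^n}\|^2\ge 0$ after the usual jump identity and (\ref{eq:jump}). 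Your approach trades the non-standard test function trick for a bit more topological machinery, but is arguably more systematic and would adapt smoothly to variants of the scheme.
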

\begin{proof}
We introduce the following inner product on 
the Hilbert space $\Vhzero\times\Sh^0$
\begin{align} 
 \Scal{ (\bw,\bpsi) }{ (\bv,\bphi) }_\D = 
\intd \Brk{ \bw\cdot\bv + \bpsi:\bphi } 
\qquad \forall (\bw,\bpsi),(\bv,\bphi) \in \Vhzero\times\Sh^0\,.
\label{IPD}
\end{align}
Given $(\buhd^{n-1},\strhd^{n-1})\in\Vhzero\times\Sh^0$, let
$\mathcal{F} : \Vhzero\times\Sh^0 \rightarrow \Vhzero\times\Sh^0$ be such that for any
$(\bw,\bpsi) \in \Vhzero\times\Sh^0$
\begin{align} 
&\Scal{\mathcal{F}(\bw,\bpsi)}{(\bv,\bphi)}_\D 
\nonumber \\
&\qquad := \int_\D \biggl[\Re \left(\frac{\bw-\buhd^{n-1}}{\dt_{n}}\right) \cdot \bv
  + \frac{\Re}{2} \left[\left((\buhd^{n-1}\cdot\nabla)\bw \right) \cdot \bv 
  - \bw \cdot \left((\buhd^{n-1}\cdot\nabla)\bv \right) \right]  
\nonumber \\
& \hspace{0.6in} + (1-\e) \grad\bw :\grad\bv + \frac{\e}{\Wi} \Bd(\bpsi) 
:\grad\bv 
+ \left(\frac{\bpsi-\strhd^{n-1}}{\dt_{n}}\right):\bphi 
\nonumber \\ 
& \hspace{0.6in} 
 - 2\left((\grad\bw) \,\Bd(\bpsi)\right):\bphi 
 + \frac{1}{\Wi}\left(\bpsi-\I\right): \bphi
 \biggr] 
-\langle \f^n, \bv\rangle_{H^1_0(\D)}
\nonumber
\\
& \hspace{0.6in} + \sum_{j=1}^{N_E} \int_{E_j}  
\Abs{\buhd^{n-1}\cdot\bn} \jump{\bpsi}_{\to\buhd^{n-1}}:\bphi^{+\buhd^{n-1}} 
\qquad \forall (\bv,\bphi) \in \Vhzero \times \Sh^0 
\,.
\label{eq:mapping}
\end{align}
We note that a solution 
$(\buhd^{n},\strhd^{n})$
to 
(\ref{eq:Pdh},b), if it exists,
corresponds to a zero of $\mathcal{F}$; that is,
\beq \label{eq:solution}
\Scal{\mathcal{F}(\buhd^{n},\strhd^{n})}{(\bv,\bphi)}_\D = 0 \qquad 
\forall (\bv,\bphi) \in \Vhzero\times\Sh^0 \,. 
\eeq
In addition, it is easily deduced that the mapping $\mathcal{F}$ is continuous.

For any $(\bw,\bpsi) \in \Vhzero\times\Sh^0$, on choosing
$(\bv,\bphi) = \brk{\bw,\frac{\e}{2\Wi}\brk{\I-\Gd'(\bpsi}}$,
we obtain analogously to (\ref{eq:estimate-Pdh}) that
\begin{align} 
\nonumber
&\Scal{\mathcal{F}(\bw,\bpsi)}{\brk{\bw,\frac{\e}{2\Wi}\brk{\I-\Gd'(\bpsi)}}}_\D
\\
& \qquad \ge \frac{F_\delta(\bw,\bpsi)-
F_\delta(\buhd^{n-1},\strhd^{n-1})}{\dt_{n}} 
+ \frac{\Re}{2\dt_{n}}\intd\|\bw-\buhd^{n-1}\|^2
+ 
\frac{1-\e}{2} 
\intd\|\grad\bw\|^2
\nonumber
\\
& \qquad\qquad 
+\frac{\e}{2\Wi^2}\intd\tr(\Bd(\bpsi)+[\Bd(\bpsi)]^{-1}-2\I) 
-\frac{1+C_P}{2(1-\e)}\Norm{\f^{n}}_{H^{-1}(\D)}^2 \,.
\label{eq:inequality1}
\end{align}

Let us now assume that for any $\gamma \in \R_{>0}$, 
the continuous mapping $\mathcal{F}$ has no zero $(\buhd^{n},\strhd^{n})$ 
satisfying~\eqref{eq:solution}, which lies in the ball
\begin{align}
\mathcal{B}_\gamma := \BRK{ (\bv,\bphi) \in \Vhzero\times\Sh^0 \,: \, 
\Norm{(\bv,\bphi)}_\D\le\gamma } \,;
\label{Bgamma}
\end{align}
where 
\begin{align}
\Norm{(\bv,\bphi)}_D := 
\left[((\bv,\bphi),(\bv,\bphi))_D\right]^{\frac{1}{2}} =
\brk{\intd [\,\|\bv\|^2+\|\bphi\|^2\,] }^\frac12 \,. 
\label{NormD}
\end{align}
Then for such $\gamma$, we can define the continuous mapping $\mathcal{G}_\gamma
: \mathcal{B}_\gamma \rightarrow  \mathcal{B}_\gamma$
such that for all $(\bv,\bphi) \in \mathcal{B}_\gamma$
\begin{align}
\mathcal{G}_\gamma(\bv,\bphi) := -\gamma 
\frac{\mathcal{F}(\bv,\bphi)}{\Norm{\mathcal{F}(\bv,\bphi)}_\D} \,. 
\label{Ggamma}
\end{align}
By the Brouwer fixed point theorem, $\mathcal{G}_\gamma$ has at least 
one fixed point $(\bw_\gamma,\bpsi_\gamma)$ in $\mathcal{B}_\gamma$. 
Hence it satisfies
\begin{equation}\label{eq:fixed-point}
\Norm{(\bw_\gamma,\bpsi_\gamma)}_\D=
 \Norm{\mathcal{G}_\gamma(\bw_\gamma,\bpsi_\gamma)}_\D=\gamma.
\end{equation}

It follows, on noting  
(\ref{Sh}), (\ref{NormD}) and (\ref{eq:fixed-point}), that
\begin{equation}\label{eq:norm_equivalence} 
\|\bpsi_\gamma\|_{L^\infty(\D)}^2
\leq \frac{1}{\min_{k\in N_K} |K_k|} \int_\D \|\bpsi_\gamma\|^2
\equiv \mu_h^2 \int_\D \|\bpsi_\gamma\|^2
\leq \mu_h^2\,\gamma^2,
\end{equation}
where $\mu_h := [1/(\min_{k\in N_K} |K_k|)]^{\frac{1}{2}}$. 
It follows from (\ref{eq:free-energy-Pd}), (\ref{Entropy2}), 
(\ref{eq:norm_equivalence})  
and (\ref{eq:fixed-point}) 
that 
\begin{align} 
\nonumber
F_\delta(\bw_\gamma,\bpsi_\gamma) & =
\frac{\Re}{2} \intd \|\bw_\gamma\|^2 + \frac{\e}{2\Wi}
\intd
\tr(\bpsi_\gamma-\Gd(\bpsi_\gamma)-\I)
\\
& \ge
\frac{\Re}{2} \intd\|\bw_\gamma\|^2 + \frac{\e}{4\Wi}
\left[\intd \|\bpsi_\gamma\| -2d|\D|\right]
\nonumber
\\
& \ge 
\frac{\Re}{2} \intd\|\bw_\gamma\|^2 + \frac{\e}{4\Wi\,\mu_h\gamma}
\|\bpsi_\gamma\|_{L^\infty(\D)}
\left[\intd \|\bpsi_\gamma\| \right]
-\frac{\e d |\D|}{2\Wi}
\nonumber
\\
& \ge
\min\brk{\frac{\Re}{2},\frac{\e}{4\Wi\,\mu_h\gamma}}
\brk{ \intd\left[\,\|\bw_\gamma\|^2+ \|\bpsi_\gamma\|^2\,\right] }
-\frac{\e d |\D|}{2\Wi}
\nonumber
\\
& = 
\min\brk{\frac{\Re}{2},\frac{\e}{4\Wi\,\mu_h\gamma}}
\gamma^2 
-\frac{\e d |\D|}{2\Wi}
\,.
\label{bb1} 
\end{align}
Hence 
for all $\gamma$ sufficiently large,
it follows from (\ref{eq:inequality1}) and (\ref{bb1})   
that
\beq \label{eq:one-hand}
\Scal{\mathcal{F}(\bw_\gamma,\bpsi_\gamma)}{\brk{\bw_\gamma,\frac{\e}{2\Wi}\brk{\I
-\Gd'(\bpsi_\gamma)}}}_\D
\ge 0\,.
\eeq

On the other hand as $(\bw_\gamma,\bpsi_\gamma)$ is a fixed point 
of ${\mathcal G}_\gamma$, we have that
\begin{align}
\nonumber
&\Scal{\mathcal{F}(\bw_\gamma,\bpsi_\gamma)}{\brk{\bw_\gamma,\frac{\e}{2\Wi}
\brk{\I-\Gd'(\bpsi_\gamma)}}}_D
\\ 
& \hspace{0.5in} = 
-\frac{\Norm{\mathcal{F}(\bw_\gamma,\bpsi_\gamma)}_\D}{\gamma} 
\intd \left[ \|\bw_\gamma\|^2+ \frac{\e}{2\Wi} 
\bpsi_\gamma :\brk{\I-\Gd'(\bpsi_\gamma)}\right] \,.
\label{eq:whereas}
\end{align}
It follows from (\ref{Entropy2}), and similarly to (\ref{bb1}),
on noting (\ref{eq:norm_equivalence}) and (\ref{eq:fixed-point}) that
\begin{align}
\intd \left[
\|\bw_\gamma\|^2 + \frac{\e}{2\Wi} \bpsi_\gamma :\brk{\I-\Gd'(\bpsi_\gamma)} \right] 
&\ge  
\intd \left[
\|\bw_\gamma\|^2 + \frac{\e}{4\Wi} \left[\|\bpsi_\gamma\| - 2d \right]\right]
\nonumber
\\
&\ge \min\brk{1,\frac{\e}{4\Wi\,\mu_h \gamma}}\gamma^2 - 
\frac{\e d|\D|}{2 \Wi}\,. 
\label{bb2} 
\end{align}
Therefore on combining (\ref{eq:whereas}) and (\ref{bb2}), we have for all 
$\gamma$ sufficiently large that 
\begin{align} \label{eq:other-hand}
\Scal{\mathcal{F}(\bw_\gamma,\bpsi_\gamma)}{\brk{\bw_\gamma,\frac{\e}{2\Wi}
\brk{\I-\Gd'(\bpsi_\gamma)}}}_D < 0 \,,
\end{align}
which obviously contradicts~\eqref{eq:one-hand}.
Hence the mapping $\mathcal{F}$ has a zero in $\mathcal{B}_\gamma$
for $\gamma$ sufficiently large. 
\end{proof}

\begin{theorem}
\label{dstabthm}
For any $\delta \in (0, \frac{1}{2}]$,
$N_T \geq 1$ and any
partitioning of $[0,T]$ into $N_T$
time steps,  
then
there exists a solution  
$\{(\buhd^{n},\strhd^{n})\}_{n=1}^{N_T}
\in [\Vhzero \times \Sh^0]^{N_T}
$ 
to (P$^{\dt}_{\delta,h}$).

In addition, it follows for $n=1,\ldots,N_T$ that
\begin{align}
&F_{\delta}(\buhd^n,\strhd^n) + \frac{1}{2} \sum_{m=1}^n
\int_\D \left[ {\rm Re} \|\buhd^m-\buhd^{m-1}\|^2
+ (1-\e)\dt_{m}\|\gbuhd^m\|^2
\right]
\nonumber
\\
&\hspace{0.5in} 
+\frac{\e}{2{\rm Wi}^2}
\sum_{m=1}^n \dt_m
\intd\tr(\Bd(\strhd^{m})+[\Bd(\strhd^{m})]^{-1}-2\I)
\nonumber
\\
& \hspace{1in}
\leq
F_{\delta}(\buh^0,\strh^0)
+ \frac{1+C_P}{2(1-\e)} \sum_{m=1}^n \dt_m \|\f^m\|_{H^{-1}(\D)}^2
\leq C
\,. 
\label{Fstab1}
\end{align}
 Moreover, it follows that 
\begin{align}
&\max_{n=0, \ldots, N_T} \int_\D \left[ \|\buhd^n\|^2 + \|\strhd^n\| + 
\delta^{-1}\,\|[\strhd^n]_{-}\| \right]
+ \sum_{n=1}^{N_T} \dt_n \intd
\|[\beta_\delta(\strhd^n)]^{-1}\| 
\leq C\,. 
\label{Fstab2}
\end{align} 
 \end{theorem}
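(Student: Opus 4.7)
The plan is to establish the three parts of the theorem sequentially: existence by induction over time steps, the free energy bound by telescoping the single-step estimate of Proposition~\ref{prop:free-energy-Pdh}, and the auxiliary bounds (\ref{Fstab2}) by extracting them from (\ref{Fstab1}) through the algebraic properties collected in Lemma~\ref{GLemma}.

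First, I would construct the solution inductively. With the data $(\buhd^0,\strhd^0)=(\buh^0,\strh^0)\in\Vhzero\times\Sh^0$ prescribed, Proposition~\ref{prop:existence-Pdh} gives, for any $\dt_n>0$, a solution $(\buhd^n,\strhd^n)\in\Vhzero\times\Sh^0$ of (\ref{eq:Pdh},b) from the previous iterate $(\buhd^{n-1},\strhd^{n-1})$; iterating yields the whole family. For (\ref{Fstab1}), I would sum the one-step inequality (\ref{eq:estimate-Pdh}) over $m=1,\dots,n$, so that the difference quotient involving $F_\delta$ telescopes and the dissipative terms accumulate; the forcing contribution is then controlled by $\frac{1+C_P}{2(1-\e)}\sum_m\dt_m\|\f^m\|_{H^{-1}(\D)}^2\le\frac{1+C_P}{2(1-\e)}\|\f\|_{L^2(0,T;H^{-1}(\D))}^2$ thanks to (\ref{fncont}). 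It remains to bound $F_\delta(\buh^0,\strh^0)$ by a constant independent of $\delta$ and $h$: the kinetic part is $\leq\frac{\Re}{2}\|\bu^0\|_{L^2(\D)}^2$ by $L^2$-stability of the projection, while for the entropy part I would use that $\strh^0\in\SPD$ with eigenvalues in $[\sigma_{\min}^0,\sigma_{\max}^0]$ by (\ref{strh0pd}), together with monotonicity of $\Gd$ and the elementary inequality $\Gd(\sigma_{\min}^0)\ge\ln\sigma_{\min}^0-1$ (verified separately for $\delta\le\sigma_{\min}^0$ and $\delta>\sigma_{\min}^0$), so that $\int_\D\tr(\strh^0-\Gd(\strh^0)-\I)\le d|\D|(\sigma_{\max}^0+1-\ln\sigma_{\min}^0)$ uniformly in $\delta\in(0,\tfrac12]$.

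For (\ref{Fstab2}), I would peel off four bounds from (\ref{Fstab1}). The velocity bound $\max_n\int_\D\|\buhd^n\|^2\le C$ is immediate from positivity of the entropy term (see (\ref{Entropy1})). The $L^1$ bounds on $\|\strhd^n\|$ and on $\delta^{-1}\|[\strhd^n]_-\|$ follow from the two inequalities in (\ref{Entropy2}), which give $\int_\D\tr(\strhd^n-\Gd(\strhd^n)-\I)\ge\frac12\int_\D\|\strhd^n\|-d|\D|$ and an analogous estimate with $\frac1{2\delta}\|[\strhd^n]_-\|$. For the last piece, I would observe that $\Bd(\strhd^n)=\max\{\strhd^n,\delta\}\in\SPD$ by construction, so $\|[\Bd(\strhd^n)]^{-1}\|\le\tr([\Bd(\strhd^n)]^{-1})$ by (\ref{modphisq}), and since $\tr([\Bd(\strhd^n)]^{-1})=\tr(\Bd(\strhd^n)+[\Bd(\strhd^n)]^{-1}-2\I)+2d-\tr(\Bd(\strhd^n))$ with $\tr(\Bd(\strhd^n))\geq 0$, the dissipation integral appearing on the left-hand side of (\ref{Fstab1}) controls $\sum_{n=1}^{N_T}\dt_n\int_\D\|[\Bd(\strhd^n)]^{-1}\|$ up to the harmless constant $2d|\D|T$.

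The main obstacle is the uniform-in-$\delta$ bound on the initial free energy $F_\delta(\buh^0,\strh^0)$: all other steps are mechanical, but here one must carefully split on the relative size of $\delta$ and $\sigma_{\min}^0$ so that the unbounded term $\ln\delta$ is absorbed by $\lambda/\delta$ in the regime $\lambda<\delta$. The remaining subtlety is purely notational, namely to ensure that every invocation of (\ref{Entropy2}) respects its hypothesis $\delta\in(0,\frac12]$, which is exactly the range assumed in the theorem.
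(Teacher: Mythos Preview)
Your proof is correct and follows the same route as the paper: existence via Proposition~\ref{prop:existence-Pdh} iterated over time steps, the bound (\ref{Fstab1}) by summing (\ref{eq:estimate-Pdh}) and controlling $F_\delta(\buh^0,\strh^0)$ via (\ref{strh0pd}) and (\ref{fncont}), and then (\ref{Fstab2}) extracted from (\ref{Fstab1}) using (\ref{Entropy1}), (\ref{Entropy2}), (\ref{modphisq}) and the positive definiteness of $\Bd(\cdot)$. The paper's own proof is a single terse paragraph citing exactly these ingredients; your version simply spells out the details, including the (correct) observation that $\Gd(\lambda)\ge\ln\lambda$ for $\lambda>0$ so that the initial entropy is bounded uniformly in~$\delta$.
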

\begin{proof}
Existence and the stability result (\ref{Fstab1}) follow 
immediately from Propositions \ref{prop:existence-Pdh}
and \ref{prop:free-energy-Pdh}, respectively,
on noting 
(\ref{eq:free-energy-Pd}), 
(\ref{strh0pd}),
(\ref{fncont}) and (\ref{freg}).
The bounds (\ref{Fstab2}) follow immediately from 
(\ref{Fstab1}), on noting   (\ref{eq:positive-term}),
(\ref{Entropy2}), (\ref{modphisq}) and the fact that 
$\beta_\delta(\bphi) \in \RSPD$ for any $\bphi \in \RS$. 
 \end{proof}

\subsection{Convergence of (P$^{\Delta t}_{\delta,h}$) to (P$^{\Delta t}_{h}$)}

We now consider the
corresponding direct finite element approximation of (P),
i.e. (P$^{\Delta t}_{\delta,h}$) without the regularization $\delta$: 

({\bf P}$^{\Delta t}_h$)
Given initial conditions 
$(\buh^0,\strh^0)\in\Vhzero\times\Sh^0$ with $\strh^0$ satisfying (\ref{strh0def},b),
then for $n = 1, \ldots, N_T$ 
find $(\buh^{n},\strh^{n})\in\Vhzero\times\Sh^0$ such that for any test functions 
$(\bv,\bphi)\in\Vhzero\times\Sh^0$
\begin{subequations}
\begin{align} 
\nonumber
&\int_\D \biggl[ \Re\left(\frac{\buh^{n}-\buh^{n-1}}{\dt_{n}}\right)\cdot \bv 
 + \frac{\Re}{2}\Brk{ \left( (\buh^{n-1}\cdot\nabla)\buh^{n}\right) \cdot \bv - 
 \buh^{n} \cdot \left( (\buh^{n-1}\cdot\nabla)\bv \right)}
\\
& \hspace{1in}
 + (1-\e) \gbuh^{n}:\grad\bv + \frac{\e}{\Wi} \strh^{n} : \grad\bv 
\biggr] = 
\langle \f^n,\bv\rangle_{H^1_0(\D)}\,,
 \label{eq:Pdh-limit}
\\ 
\nonumber
&\int_\D \left[ \left(\frac{\strh^{n}-\strh^{n-1}}{\dt_{n}}\right) : \bphi 
 - 2\left((\gbuh^{n})\,\strh^{n}\right) :\bphi 
 + \frac{1}{\Wi}\left(\strh^{n}-\I\right): \bphi 
\right]
\\
& \hspace{1in}
 + \sum_{j=1}^{N_E} \int_{E_j}  \Abs{\buh^{n-1}\cdot\bn} 
 \jump{\strh^{n}}_{\to\buh^{n-1}}:\bphi^{+\buh^{n-1}}
 = 0 \,.
\label{eq:Pdhb-limit}
\end{align}
\end{subequations}
We introduce also the unregularised free energy 
\begin{align} 
F(\bv,\bphi) := \frac{\Re}{2}\intd\|\bv\|^2 + 
\frac{\e}{2\Wi}\intd\tr(\bphi-G(\bphi)-\I) \,,
\label{unreg-energy}
\end{align}
which is well defined for $(\bv,\bphi) \in \Vhzero \times \Sh^0$ 
with $\bphi$ being positive definite on $\D$.

\begin{theorem} 
\label{dconthm}
For all regular partitionings $\mathcal{T}_h$ of $\D$
into simplices $\{K_k\}_{k=1}^{N_K}$
and all partitionings $\{\Delta t_n\}_{n=1}^{N_T}$
of $[0,T]$,  
there exists a subsequence
$\{\{(\buhd^{n},\strhd^{n})\}_{n=1}^{N_T}\}_{\delta>0}$, where  
$\{(\buhd^{n},$ $\strhd^{n})\}_{n=1}^{N_T} 
\in [\Vhzero \times \Sh^0]^{N_T}$ 
solves (P$^{\dt}_{\delta,h}$), and  
$\{(\buh^{n},\strh^{n})\}_{n=1}^{N_T} \in [\Vhzero \times \Sh^0]^{N_T}$ 
such that for the subsequence
\begin{align}
\buhd^{n} \rightarrow \buh^{n}, \qquad 
\strhd^{n} \rightarrow \strh^{n} \qquad \mbox{as } \delta \rightarrow 0_+\,,
\qquad\mbox{for} \quad n=1, \ldots, N_T\,.
\label{conv}
\end{align}
In addition, for 
$n=1,\ldots, N_T$, $\strh^{n}\mid_{K_k} \in \RSPD$, $k=1,\ldots, N_K,$. 
Moreover, $\{(\buh^{n},\strh^{n})\}_{n=1}^{N_T} 
\in [\Vhzero \times \Sh^0]^{N_T}$
solves (P$_{h}^{\dt}$) and 
for $n=1,\ldots,N_T$
\begin{align}
\nonumber
&\frac{F(\buh^{n},\strh^{n})-F(\buh^{n-1},\strh^{n-1})}{\dt_{n}} 
+ \frac{ {\rm Re}}{2\dt_{n}}\intd\|\buh^{n}-\buh^{n-1}\|^2
+(1-\e)\intd\|\gbuh^{n}\|^2
\\
& \hspace{1in} +\frac{\e}{2{\rm Wi}^2}\intd\tr(\strh^{n}+[\strh^{n}]^{-1}-2\I)
\nonumber \\
& \qquad
\le 
\frac{1}{2}(1-\e)\intd\|\gbuh^{n}\|^2
+ \frac{1+C_P}{2(1-\e)} \|\f^{n}\|_{H^{-1}(\D)}^2
\,.
\label{eq:estimate-Ph}
\end{align}  
\end{theorem}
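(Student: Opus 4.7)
The plan is to exploit that $\Vhzero\times\Sh^0$ is finite-dimensional, so the $\delta$-uniform bounds from Theorem \ref{dstabthm} yield a convergent subsequence, and then to pass to the limit $\delta\to 0_+$ in the scheme and in the energy inequality. The principal task is to verify that the limit $\strh^n|_{K_k}$ is strictly positive definite on each simplex, which is precisely what allows the regularization $\Bd$ to be removed in the limit.

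First, since $\strhd^n\in\Sh^0$ is piecewise constant on $\{K_k\}$ with $\int_\D\|\strhd^n\|\le C$ uniformly in $\delta$ by (\ref{Fstab2}), the values $\strhd^n|_{K_k}$ are bounded in $\RS$ uniformly in $\delta$; similarly $\buhd^n$ is bounded in the finite-dimensional space $\Vhzero$ by the $L^2$ bound in (\ref{Fstab2}). A diagonal extraction over $n=1,\dots,N_T$ then yields a subsequence along which (\ref{conv}) holds with some $(\buh^n,\strh^n)\in\Vhzero\times\Sh^0$.

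Second, I would identify $\strh^n|_{K_k}\in\RSPD$. The bound (\ref{Fstab2}) gives $\|[\strhd^n]_{-}\|_{L^1(\D)}\le C\delta$, so passing to the limit forces $[\strh^n|_{K_k}]_{-}=0$; that is, $\strh^n|_{K_k}$ is positive semi-definite. The same inequality also gives $\|[\beta_\delta(\strhd^n)|_{K_k}]^{-1}\|\le C(\dt_n|K_k|)^{-1}$ uniformly in $\delta$. Since $\beta_\delta(\strhd^n)|_{K_k}\to\strh^n|_{K_k}$ along the subsequence while its inverse remains uniformly bounded, a standard linear-algebra argument (if the limit had a null eigenvector, $\|[\beta_\delta(\strhd^n)|_{K_k}]^{-1}\|$ would blow up) forces the smallest eigenvalue of $\strh^n|_{K_k}$ to stay bounded away from zero. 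Hence $\strh^n|_{K_k}\in\RSPD$, and $[\beta_\delta(\strhd^n)]^{-1}\to[\strh^n]^{-1}$ by continuity of matrix inversion on $\RSPD$.

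Finally, I would pass to the limit $\delta\to 0_+$ in (\ref{eq:Pdh},b) to obtain (\ref{eq:Pdh-limit},b). For $\delta$ sufficiently small, $\Bd(\strhd^n|_{K_k})=\strhd^n|_{K_k}$ because the eigenvalues of $\strhd^n|_{K_k}$ are bounded below by $\frac{1}{2}\lambda_{\min}(\strh^n|_{K_k})>0$ for small $\delta$; all remaining terms in the scheme are multilinear in the finite-dimensional variables and pass cleanly. The discrete free energy bound (\ref{eq:estimate-Ph}) then follows by passing to the limit in (\ref{eq:estimate-Pdh}), using that $G_\delta(s)\to G(s)$ for $s>0$ to conclude $F_\delta(\buhd^n,\strhd^n)\to F(\buh^n,\strh^n)$, together with $\Bd(\strhd^n)+[\Bd(\strhd^n)]^{-1}\to\strh^n+[\strh^n]^{-1}$. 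The main obstacle throughout is the positive-definiteness argument in step two, which rests crucially on the entropy-derived bound on $[\beta_\delta(\strhd^n)]^{-1}$ in (\ref{Fstab2}); without that bound one could not exclude collapse of the smallest eigenvalue of $\strh^n|_{K_k}$ to zero in the limit.
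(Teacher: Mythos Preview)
Your proposal is correct and follows essentially the same approach as the paper: extract a subsequence by finite-dimensionality and the $\delta$-uniform bounds of Theorem~\ref{dstabthm}, use the $\delta^{-1}\|[\strhd^n]_-\|$ bound to force non-negativity of the limit, use the uniform bound on $\|[\beta_\delta(\strhd^n)]^{-1}\|$ together with $\beta_\delta(\strhd^n)\to\strh^n$ to rule out a zero eigenvalue, and then pass to the limit in the scheme and in (\ref{eq:estimate-Pdh}).

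One small point deserves more care than ``multilinear'' suggests: the upwind edge term in (\ref{eq:Pdhb}) involves $\bphi^{+\buhd^{n-1}}$, which depends on the \emph{sign} of $\buhd^{n-1}\cdot\bn$ along each $E_j$ and is therefore not a multilinear function of the data. The paper handles this by rewriting the edge sum via (\ref{eq:jump}) as $-\sum_k\int_{\partial K_k}(\buhd^{n-1}\cdot\bn_{K_k})\,\strhd^n:\bphi^{+\buhd^{n-1}}$ and then passing to the limit; equivalently, you can argue directly that on the (relatively open) portion of each edge where $\buh^{n-1}\cdot\bn\neq 0$ the sign of $\buhd^{n-1}\cdot\bn$ eventually agrees with that of $\buh^{n-1}\cdot\bn$, while on the complement the factor $|\buhd^{n-1}\cdot\bn|$ tends to zero, so dominated convergence applies. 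Either way the limit is (\ref{eq:Pdhb-limit}), but the step is not quite as automatic as your sketch implies.
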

\begin{proof}
For any integer $n \in[1,N_T]$,
the desired subsequence convergence result (\ref{conv}) follows immediately 
from (\ref{Fstab2}),
as $(\buhd^{n},$ $\strhd^{n})$ are finite dimensional for fixed $\Vhzero \times S_h^0$. 
It also follows from (\ref{Fstab2}),
(\ref{conv}) and (\ref{Lip}) that $[\strh^{n}]_{-}$ vanishes on $\D$, 
so that
$\strh^{n}$ must be non-negative definite on $\D$. Hence 
on noting this,  (\ref{Lip}) and (\ref{conv}), 
we have the following subsequence convergence results
\begin{align}
\beta_{\delta}(\strh^{n}) \rightarrow \strh^{n} \quad \mbox{as} \quad
\delta \rightarrow 0_+
\qquad \mbox{and}
\qquad  
\beta_{\delta}(\strhd^{n}) \rightarrow \strh^{n} \quad \mbox{as} \quad
\delta \rightarrow 0_+\,.
\label{betacon}
\end{align}

It also follows from (\ref{Fstab2}), (\ref{betacon}) and as $[\Bd(\strhd^{n})]^{-1}
\Bd(\strhd^{n})=\I$ that the following subsequence result 
\begin{align}
[\Bd(\strhd^{n})]^{-1} \rightarrow [\strh^{n}]^{-1}\qquad \mbox{as} \quad
\delta \rightarrow 0_+
\label{posdef}
\end{align}
holds, and so $\strh^{n}$ is positive definite on $\D$. Therefore, we have 
from (\ref{conv}) and (\ref{eq:Gd}) that
\begin{align}
\Gd(\strhd^{n}) \rightarrow G(\strh^{n}) \qquad \mbox{as} \quad
\delta \rightarrow 0_+\,.
\label{Gcon}
\end{align}

Since $\buhd^{n-1},\,\buh^{n-1} \in C(\overline{\D})$, it follows from 
the $\Sh^0$ version of (\ref{eq:jump}), (\ref{eq:streams}) and (\ref{conv}) that
for $j=1,\ldots, N_E$ and for all $\bphi \in \Sh^0$
\begin{align}
\sum_{j=1}^{N_E} 
\int_{E_j}  \Abs{\buhd^{n-1}\cdot\bn} 
 \jump{\strhd^{n}}_{\to\buhd^{n-1}}:\bphi^{+\buhd^{n-1}}
&= -\sum_{k=1}^{N_K} \int_{\partial K_k} \brk{\buhd^{n-1} \cdot \bn_{K_k}} \strhd^n : 
\bphi^{+\buhd^{n-1}}
\nonumber \\
\rightarrow 
-\sum_{k=1}^{N_K} \int_{\partial K_k} \brk{\buh^{n-1} \cdot \bn_{K_k}} \strh^n : 
\bphi^{+\buh^{n-1}}
&=
\sum_{j=1}^{N_E} 
\int_{E_j}  \Abs{\buh^{n-1}\cdot\bn} 
 \jump{\strh^{n}}_{\to\buh^{n-1}}:\bphi^{+\buh^{n-1}}
\nonumber \\
& \hspace{1in}
\quad \mbox{as} \quad
\delta \rightarrow 0_+\,.
\label{Edgcon}
\end{align} 
Hence using (\ref{conv}), (\ref{betacon}) and (\ref{Edgcon}),
we can pass to the limit $\delta \rightarrow 0_+$ in (P$_{\delta,h}^{\dt}$), 
(\ref{eq:Pdh},b),
to show that $\{(\buh^{n},\strh^{n})\}_{n=1}^{N_T}
\in [\Vhzero \times \Sh^0]^{N_T}$
solves (P$_{h}^{\dt}$), (\ref{eq:Pdh-limit},b). 
Similarly, using (\ref{conv}), (\ref{betacon}), (\ref{posdef}) and (\ref{Gcon}), 
and noting (\ref{eq:free-energy-Pd}) and (\ref{unreg-energy}),
we can pass to the limit $\delta \rightarrow 0_+$ in (\ref{eq:estimate-Pdh})
to obtain the desired result (\ref{eq:estimate-Ph}). 
\end{proof}

\begin{remark}
\label{rk:complement}
Most numerical approximations of (P) suffer from instabilities
when ${\rm Wi}$ is relatively large, the so-called {\it high} Weissenberg
number problem (HWNP). 
This problem is still not fully understood.
Some reasons for these instabilities are 
discussed in
Boyaval {\it et al.},\cite{boyaval-lelievre-mangoubi-09}
e.g.\ poor numerical scheme or the lack of existence of a solution
to (P) itself. 
In addition in Boyaval {\it et al.},\cite{boyaval-lelievre-mangoubi-09} 
finite element approximations of (P) 
such as (P$^{\Delta t}_h$),
approximating the primitive
variables $(\bu,p,\strs)$, are compared
with   
finite element approximations of the 
log-formulation of (P), introduced in Fattal and Kupferman,\cite{fattal-05}
which is based on the variables $(\bu,p,\bpsi)$, where $\bpsi = \ln \strs$. 
The equivalent free energy estimate for this log-formulation
is based on testing the Navier-Stokes equation with $\bu$ as before,
but the log-form of the stress equation with $(\exp {\bpsi} - \I)$.  
Whereas the free energy estimate for (P) requires $\strs$ to be positive
definite, due to the testing with $\ln \strs$,
the free energy estimate for the log-formulation 
requires no such constraint.   
In  Boyaval {\it et al.}\cite{boyaval-lelievre-mangoubi-09} a constraint,
based on the initial data,
was required on the time step
in order to ensure that the approximation to $\strs$ remained 
positive definite for schemes such as (P$^{\Delta t}_h$) approximating (P);
whereas existence of a solution 
to finite element approximations
of the log-formulation, and  
satisfying a discrete log-form of the free energy estimate,
were shown for any choice of time step.  
It was suggested 
in Boyaval {\it et al.}\cite{boyaval-lelievre-mangoubi-09}
that this may be the reason why the
approximations of the log-formulation 
are reported to be more stable than those based
on (P).
However, Theorem~\ref{dconthm} above shows that there does exist (at least) one solution
to 
(P$^{\Delta t}_h$), 
which satisfies the free energy estimate (\ref{eq:estimate-Ph}), whatever the time step.
Of course, we do not have a uniqueness proof for (P$^{\Delta t}_h$).  
\end{remark}

\section{Regularized problems with stress diffusion and possibly the cut-off
$\BL$}
\label{sec:Palpha}

\subsection{Regularizations, (P$^{(L)}_\alpha$), of (P) with stress diffusion 
and possibly the cut-off $\BL$}

In this section, we consider 
the following
modified versions of (P) for given constants 
$\alpha \in \R_{>0}$ and 
$L \geq 2$:

\noindent
{\bf (P$_\alpha^{(L)}$)} Find 
$\buaorL : (t,\xx)\in[0,T)\times\D \mapsto \buaorL(t,\xx)\in\R^d$,  
$\paorL : (t,\xx)\in(0,T)\times\D \mapsto \paorL(t,\xx)\in\R$ 
and $\straorL : (t,\xx)\in[0,T)\times\D \mapsto\straorL(t,\xx)
\in \RS$ 
such that
\begin{subequations}
\begin{align}
\Re \brk{\pd{\buaorL}{t} + (\buaorL\cdot\grad)\buaorL} & =  -\grad \paorL 
+ (1-\e)\Delta\buaorL +  \frac{\e}{\Wi}\div\BorL(\straorL) 
\nonumber \\
& \hspace{0.55in} + \f 
\qquad \mbox{on } \D_T 
\,, 
\label{eq:aoldroyd-b-sigma}
\\
\div\buaorL & = 0 
\qquad \qquad \qquad \mbox{on } \D_T 
\,, 
\label{eq:aoldroyd-b-sigma1}
\\
\pd{\straorL}{t}+(\buaorL\cdot\grad)\BorL(\straorL) & = 
(\gbuaorL)\BorL(\straorL)+\BorL(\straorL)(\gbuaorL)^T
\nonumber
\\
&
\hspace{0.2in} 
-\frac{1}{\Wi}\brk{\straorL-\I} 
+ \alpha \Delta \straorL
\qquad 
\mbox{on } \D_T 
\,,
\label{eq:aoldroyd-b-sigma2}
\\
\buaorL(0,\xx) &= \bu^0(\xx) 
\label{eq:ainitial}\qquad \qquad \forall \xx \in \D\,,\\
\straorL(0,\xx) &= \strs^0(\xx) 
\qquad \qquad \forall \xx \in \D\,,
\label{eq:ainitial1}
\\
\label{eq:adirichlet}
\buaorL&=\bzero \qquad \qquad \qquad \text{on $ (0,T) \times \partial\D$}
\,,
\\
\label{eq:aneumann}
(\bn_{\partial \D} \cdot
\grad) \straorL &=\bzero \qquad 
\qquad \qquad 
\text{on $ (0,T) \times \partial\D$}
\,;
\end{align}
\end{subequations}
where $\bn_{\D}$ is normal to the boundary $\partial \D$.

Hence problem (P$_\alpha^{(L)}$) is the same as (P), but with 
the added diffusion term $\alpha \Delta \straorL$
for the stress equation (\ref{eq:aoldroyd-b-sigma2}),
and the associated Neumann boundary condition (\ref{eq:aneumann});
and in the case of (P$_\alpha^L)$ with certain terms in  
(\ref{eq:aoldroyd-b-sigma},c) involving $\straL$ replaced by
$\BL(\straL)$, recall (\ref{eq:Bd}).
Of course, it is naturally assumed in (P$_\alpha^{(L)}$) 
that $\straorL$ is positive definite
on $\D_T$ in order for $\BorL(\straorL)$ to be well defined.

We will also be interested in the corresponding regularization
(P$_{\alpha,\delta}^{(L)}$) of (P$_\alpha^{(L)}$) with solution
$(\bu_{\alpha,\delta}^{(L)}, p_{\alpha,\delta}^{(L)}, \strs_{\alpha,\delta}^{(L)})$;
where $\BorL(\cdot)$ in (\ref{eq:aoldroyd-b-sigma}--g) 
is replaced by $\BdorL(\cdot)$, and so that 
$\strs_{\alpha,\delta}^{(L)}$ is not required to be positive definite.

\subsection{Formal energy estimates for (P$^{(L)}_{\alpha,\delta}$)}

Let $\FdorL(\buadorL,\stradorL)$ denote the free energy of the solution 
$(\buadorL,$ $\padorL,\stradorL)$ 
to problem $(\rm P_{\alpha,\delta}^{(L)})$, where $\FdorL : \U \times \S 
\rightarrow \R$ is defined as 
\beq \label{eq:free-energy-PadL}
\FdorL(\bv,\bphi) := \frac{\Re}{2}\intd\|\bv\|^2 + \frac{\e}{2\Wi}
\intd \tr(\bphi-\GdorL(\bphi)-\I)  
\,.
\eeq
We have the following analogue of Proposition  \ref{prop:free-energy-Pd}.
\begin{proposition} \label{prop:free-energy-PadL}
Let $(\buadorL,\padorL,\stradorL)$ 
be a sufficiently smooth 
solution to problem $(\rm P_{\alpha,\delta}^{(L)})$.
Then the free energy $\FdorL(\buadorL,\stradorL)$  
satisfies for a.a.\ $t \in (0,T)$
\begin{align} 
\nonumber
&\deriv{}{t}\FdorL(\buadorL,\stradorL)
+(1-\e)\intd\|\gbuadorL\|^2
\\
& \hspace{0.5in}+ \displaystyle \frac{\e}{2 {\rm Wi}^2}\intd\tr(\BdorL(\stradorL)
+[\BdorL(\stradorL)]^{-1}
-2\I)
\nonumber
\\
& \hspace{1in}
+ \frac{\alpha\e\delta^2}{2{\rm Wi}}
\intd\|\grad \GdorL'(\stradorL)\|^2
\leq
\langle \f,\buadorL\rangle_{H^1_0(\D)}\,.
\label{eq:estimate-PadL}
\end{align}
\end{proposition}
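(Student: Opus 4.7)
The plan is to mirror the free-energy calculation in the proof of Proposition~\ref{prop:free-energy-Pd}, while treating the two novelties in $(\mathrm{P}_{\alpha,\delta}^{(L)})$: the cut-off $\BdorL$ inside the convection and upper-convected terms of \eqref{eq:aoldroyd-b-sigma2}, and the extra diffusion $\alpha\,\Delta\stradorL$ together with its Neumann boundary condition \eqref{eq:aneumann}. First I would test \eqref{eq:aoldroyd-b-sigma} against $\buadorL$ and \eqref{eq:aoldroyd-b-sigma2} against $\frac{\e}{2\Wi}\brk{\I-\GdorL'(\stradorL)}$, integrate over $\D$, and add. The momentum test proceeds exactly as in Proposition~\ref{prop:free-energy-Pd}: incompressibility annihilates the pressure and nonlinear convection, integration by parts together with \eqref{eq:adirichlet} produces $\frac{\Re}{2}\deriv{}{t}\intd\|\buadorL\|^2$ and $(1-\e)\intd\|\gbuadorL\|^2$, and leaves the coupling $-\frac{\e}{\Wi}\intd\BdorL(\stradorL):\gbuadorL$. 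On the stress side, \eqref{eq:deriv-tensor-g} converts $\pd{\stradorL}{t}:(\I-\GdorL'(\stradorL))$ into the exact time derivative $\pd{}{t}\tr(\stradorL-\GdorL(\stradorL))$; the upper-convected pair collapses to $2\tr((\gbuadorL)\BdorL(\stradorL))$ via \eqref{eq:inverse-Gd}, \eqref{eq:symmetric-tr} and $\div\buadorL=0$, exactly cancelling the momentum coupling; and the relaxation term, via the splitting $\stradorL-\I=(\stradorL-\BdorL(\stradorL))+(\BdorL(\stradorL)-\I)$, is processed by discarding the first summand using \eqref{expositive} and simplifying the second using \eqref{eq:inverse-Gd}, yielding the nonnegative $\frac{\e}{2\Wi^2}\intd\tr(\BdorL(\stradorL)+[\BdorL(\stradorL)]^{-1}-2\I)$.

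The genuinely new contributions are the stress convection and the diffusion. For the convection I would establish the pointwise identity
\[
(\buadorL\cdot\grad)\BdorL(\stradorL):\brk{\I-\GdorL'(\stradorL)}
=(\buadorL\cdot\grad)\tr\brk{\BdorL(\stradorL)-\GdorL(\BdorL(\stradorL))}\,,
\]
so that its integral over $\D$ vanishes by $\div\buadorL=0$ and \eqref{eq:adirichlet}. The key ingredients are $\BdorL\circ\BdorL=\BdorL$ on $\R$, which gives $\GdorL'\circ\BdorL=\GdorL'$, together with an extension of \eqref{eq:deriv-tensor-g} of the form $\partial f(\stradorL):g(\stradorL)=\partial\stradorL:[f'g](\stradorL)$ for scalar $C^1$ functions $f$ and $g$ and any spatial derivative $\partial$, applied here with $f=\BdorL$ and $g(s)=1-\GdorL'(s)$. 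For the diffusion I would integrate by parts using \eqref{eq:aneumann} to reach $-\frac{\alpha\e}{2\Wi}\intd\grad\stradorL::\grad\GdorL'(\stradorL)$, then invoke \eqref{matrix:Lipschitz} on difference quotients of $\stradorL$ along each coordinate direction, passing to the limit, to bound this below by $\frac{\alpha\e\delta^2}{2\Wi}\intd\|\grad\GdorL'(\stradorL)\|^2$, supplying the last left-hand-side term in \eqref{eq:estimate-PadL}.

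The principal technical hurdle is justifying the extended chain rule used in the convection identity: although $\partial f(\stradorL)\ne f'(\stradorL)\,\partial\stradorL$ as matrices (non-commutativity of $\stradorL$ with $\partial\stradorL$), a diagonalisation $\stradorL=\mathbf{O}^T\mathbf{D}\mathbf{O}$ shows that the contributions involving $\partial\mathbf{O}$ assemble into anti-symmetric matrices of the form $\mathbf{O}(\partial\mathbf{O}^T)$ and cancel identically when contracted against any $g(\stradorL)$, leaving $\tr[(\partial\mathbf{D})\,f'(\mathbf{D})\,g(\mathbf{D})]$; the same manipulation applied to $\partial\stradorL:[f'g](\stradorL)$ yields the same diagonal quantity. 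Once this identity is in hand, assembling all the contributions and noting that $\tr\I$ is constant so that the kinetic and relative-entropy time derivatives combine into $\deriv{}{t}\FdorL(\buadorL,\stradorL)$ delivers the inequality \eqref{eq:estimate-PadL}.
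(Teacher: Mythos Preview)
Your argument is correct at the formal level and covers all the terms, but your treatment of the stress convection term follows a genuinely different path from the paper's. You write $(\buadorL\cdot\grad)\BdorL(\stradorL):(\I-\GdorL'(\stradorL))$ directly as the transport of the scalar $\tr(\BdorL(\stradorL)-\GdorL(\BdorL(\stradorL)))$, relying on the extended chain rule $\partial f(\sigma):g(\sigma)=\partial\sigma:[f'g](\sigma)$ that you justify via diagonalisation. The paper instead first integrates by parts (using $\div\buadorL=0$ and \eqref{eq:adirichlet}) to shift the gradient onto the test factor, obtaining $\intd\BdorL(\stradorL):(\buadorL\cdot\grad)\GdorL'(\stradorL)$, and then invokes the identity $\BdorL=\HdorL'\circ\GdorL'$ from \eqref{eq:HBdL} together with the standard concave chain rule \eqref{eq:deriv-tensor-g} for $\HdorL$ to recognise this as $\intd(\buadorL\cdot\grad)\tr\bigl(\HdorL(\GdorL'(\stradorL))\bigr)=0$.

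Both routes yield the vanishing of the convection contribution. Your approach is arguably more direct at the continuous level but requires the extended matrix chain rule for the merely Lipschitz function $\BdorL$. The paper's approach needs only the $C^1$ concave chain rule already established, and, more importantly, is the one that can be mimicked at the discrete level with continuous piecewise linear approximation of the stress (see the construction of $\Lambda_{\delta,m,p}^{(L)}$ in Section~\ref{sec:Palphah}); this is the explicit reason the paper gives for abandoning the simpler treatment of Proposition~\ref{prop:free-energy-Pd}. Your handling of the diffusion term via \eqref{matrix:Lipschitz} on difference quotients is effectively the paper's bound $-\grad\stradorL::\grad\GdorL'(\stradorL)\ge\delta^2\|\grad\GdorL'(\stradorL)\|^2$.
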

\begin{proof} 
Multiplying the Navier-Stokes equation (\ref{eq:aoldroyd-b-sigma}) with $\buadorL$
and the stress equation 
(\ref{eq:aoldroyd-b-sigma2})
with $\frac{\e}{2\Wi}(\I-\GdorL'(\stradorL))$,
summing and integrating over $\D$ yields,
after 
using integrations by parts and the incompressibility property in the standard
way, 
that
\begin{align}
\nonumber
&\intd  \Brk{ \frac{\Re}{2} 
\pd{}{t}\|\buadorL\|^2 + (1-\e)\|\gbuadorL\|^2 }
\\ 
& \hspace{0.1in}+\frac{\e}{\Wi} \,\intd \biggl[
 \BdorL(\stradorL):\gbuadorL 
- \frac{\alpha}{2}\,\grad \stradorL :: \grad \GdorL'(\stradorL)
\biggr. 
\nonumber
\\ 
& \hspace{0.1in}  + \frac{1}{2} \brk{ \brk{\pd{}{t}\stradorL
+(\buadorL\cdot\grad)
\BdorL(\stradorL)}
 + \frac{1}{\Wi}\brk{\stradorL-\I} } :\brk{\I-\GdorL'(\stradorL)} 
\nonumber
\\
&\hspace{0.1in}  - \frac{1}{2} 
 \brk{ \brk{\gbuadorL}\BdorL(\stradorL) + \BdorL(\stradorL)\brk{\gbuadorL}^T } 
 :\brk{\I-\GdorL'(\stradorL)} \biggr]
\nonumber \\
&\hspace{3.3in}
= 
\langle \f,\buadorL\rangle_{H^1_0(\D)}\,.
\label{PdaLener}
\end{align}
Similarly to (\ref{scalar:Lipschitz}), we have that 
\begin{align}
-\grad \stradorL :: \grad \GdorL'(\stradorL)
\geq \delta^2 \| \grad \GdorL'(\stradorL)\|^2 \qquad a.e.\ \mbox{in } \D_T.
\label{PdaLener0}
\end{align}
Using (\ref{eq:deriv-tensor-g}),  
we have that
\begin{align}
&\pd{}{t}\stradorL : \brk{\I-\GdorL'(\stradorL)} 
= \pd{}{t} \tr\brk{\stradorL-\GdorL(\stradorL)} \,.
\label{PadLener1}
\end{align}
We will deal with the convection term differently to the
approach used in (\ref{Pdener1}),
as that cannot be mimicked at a discrete level using
continuous piecewise linear elements to approximate $\stradorL$.
Note that we cannot use $S_h^0$ with the desirable property
(\ref{eq:inclusion1}) to approximate $\stradorL$, 
as we now have the added diffusion term.      
Instead, as 
$\stradorL$ has been replaced by $\BdorL(\stradorL)\equiv \HdorL'
(\GdorL'(\stradorL))$, on recalling (\ref{eq:HBdL}),
in this convective term 
and as
$\buadorL \in \Uz$, we have that
\begin{align}
&\int_\D (\buadorL\cdot\grad)\BdorL(\stradorL):\brk{\I-\GdorL'(\stradorL)} 
\nonumber \\
& \hspace{1.8in} = \int_\D \BdorL(\stradorL): (\buadorL\cdot\grad)\GdorL'(\stradorL) 
\nonumber \\
& \hspace{1.8in}
=
\int_\D
(\buadorL\cdot\grad)\tr\brk{\HdorL(\GdorL'(\stradorL))} = 0,
\label{PadLeber1a} 
\end{align}
where we have noted the spatial counterpart of (\ref{eq:deriv-tensor-g}).  
Similarly to (\ref{Pdener2}) and (\ref{Pdener3}) we obtain that
\begin{align}
&\brk{\brk{\gbuadorL}\BdorL(\stradorL)+ 
\BdorL(\stradorL)\brk{\gbuadorL}^T
}:\brk{\I-\GdorL'(\stradorL)} 
\nonumber \\
&
\hspace{2.5in}
= 2\tr\brk{ \brk{\gbuadorL}\BdorL(\stradorL) } \,,
\label{PdaLener2} 
\end{align}
and once again the terms involving the left-hand side of (\ref{PdaLener2})
in (\ref{PdaLener})  
cancel with the term $\frac{\e}{\Wi}\BdorL(\stradorL):\gbuadorL$
in (\ref{PdaLener})  
arising from the Navier-Stokes equation.  
Finally, the treatment of the remaining term $\brk{\stradorL-\I}:
\brk{\I-\GdorL'(\stradorL)}$
follows similarly to (\ref{remterm}); and so we obtain the 
desired free energy inequality \eqref{eq:estimate-PadL}.
\end{proof}

The following Corollary follows from   
(\ref{eq:estimate-PadL}) on noting the proof of 
Corollary \ref{cor:free-energy-Pd}.

\begin{corollary} 
\label{cor:free-energy-PadL}
Let $(\buadorL,\padorL,\stradorL)$ 
be a sufficiently smooth solution to problem $(\rm P_{\alpha,\delta}^{(L)})$. 
Then it follows that
\begin{align}
\nonumber
&\sup_{t \in (0,T)}\FdorL(\buadorL(t,\cdot),\stradorL(t,\cdot))
+
\frac{\alpha\e\delta^2}{{2\rm Wi}} 
\,\int_{D_T}
\|\grad \GdorL'(\stradorL)\|^2
\\
& \qquad
+
\frac{1}{2}
\int_{\D_T} \!\!\Brk{ 
(1-\e)
\|\gbuadorL\|^2
+\frac{\e}{{\rm Wi}^2}\tr(\BdorL(\stradorL)+[\BdorL(\stradorL)]^{-1}-2\I)}
\nonumber
\\
& \hspace{1.3in}
\le 2\brk{ \FdorL(\bu^0,\strs^0) + \frac{1+C_P}{2(1-\e)} 
\Norm{\f}_{L^2(0,T;H^{-1}(\D))}^2 }\,.
\label{eq:free-energy-boundaL}
\end{align}
\end{corollary}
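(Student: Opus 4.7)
The plan is to mimic the proof of Corollary~\ref{cor:free-energy-Pd} essentially verbatim, since the only new feature on the left-hand side of \eqref{eq:estimate-PadL} relative to \eqref{eq:estimate-Pd} is the non-negative stress-diffusion contribution $\frac{\alpha\e\delta^2}{2\Wi}\int_\D\|\grad\GdorL'(\stradorL)\|^2$, while the right-hand side is structurally identical. Accordingly, I would start from the pointwise-in-time inequality \eqref{eq:estimate-PadL} provided by Proposition~\ref{prop:free-energy-PadL}.

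First, I would bound the forcing term $\langle\f,\buadorL\rangle_{H^1_0(\D)}$ exactly as in \eqref{fbound}, using the Cauchy--Schwarz and Young inequalities together with the Poincar\'e inequality \eqref{eq:poincare}, and choosing the Young parameter $\nu^2=(1-\e)/(1+C_P)$ so that
\[
\langle\f,\buadorL\rangle_{H^1_0(\D)}
\le \frac{1+C_P}{2(1-\e)}\|\f\|_{H^{-1}(\D)}^2+\frac{1-\e}{2}\int_\D\|\gbuadorL\|^2.
\]
Inserting this into \eqref{eq:estimate-PadL} absorbs half of the viscous dissipation term on the right-hand side, leaving $\tfrac{1}{2}(1-\e)\int_\D\|\gbuadorL\|^2$ together with the full entropy dissipation $\tfrac{\e}{2\Wi^2}\int_\D\tr(\BdorL(\stradorL)+[\BdorL(\stradorL)]^{-1}-2\I)$ and the full stress-diffusion term $\tfrac{\alpha\e\delta^2}{2\Wi}\int_\D\|\grad\GdorL'(\stradorL)\|^2$ on the left-hand side.

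Next, I would integrate this inequality in time over $(0,t)$ for an arbitrary $t\in(0,T)$. Using that $t\mapsto\FdorL(\buadorL(t,\cdot),\stradorL(t,\cdot))$ is absolutely continuous (by the assumed smoothness), the time derivative term integrates to $\FdorL(\buadorL(t,\cdot),\stradorL(t,\cdot))-\FdorL(\bu^0,\strs^0)$, and the remaining non-negative terms accumulate on $\D_t:=(0,t)\times\D$. Dropping the integrand over $\D_T\setminus\D_t$ on the left and using $\|\f\|_{L^2(0,t;H^{-1}(\D))}^2\le\|\f\|_{L^2(0,T;H^{-1}(\D))}^2$ on the right, I obtain a uniform-in-$t$ bound. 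Taking the supremum over $t\in(0,T)$ on the $\FdorL$ term and summing with the (then finalised) $\D_T$ dissipation integrals yields \eqref{eq:free-energy-boundaL} after absorbing the factor $2$ that appears when grouping the sup of $\FdorL$ and the dissipation integrals against their common upper bound.

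There is no real obstacle here; the argument is entirely parallel to Corollary~\ref{cor:free-energy-Pd}. The only slightly delicate bookkeeping is that one must keep track of the two dissipation contributions coming specifically from the regularised stress equation --- the entropy dissipation and the gradient-of-$\GdorL'$ term coming from \eqref{PdaLener0} --- to ensure both land on the left-hand side with the correct prefactors $\tfrac{1}{2}$ and $1$, respectively, as recorded in \eqref{eq:free-energy-boundaL}.
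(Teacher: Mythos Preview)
Your proposal is correct and follows essentially the same approach as the paper, which simply states that the corollary follows from \eqref{eq:estimate-PadL} on noting the proof of Corollary~\ref{cor:free-energy-Pd}. Your detailed bookkeeping of the Young-inequality absorption, time integration, and the factor-of-$2$ argument (splitting the bound into $\sup_t \FdorL$ and the $\D_T$ dissipation separately, using $\FdorL\ge 0$ from \eqref{Entropy1}) is exactly what is needed to flesh out that one-line reference.
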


\section{Finite element approximation of (P$_{\alpha,\delta}^{(L)}$) and
(P$_{\alpha}^{(L)}$)}
\label{sec:Palphah}

\subsection{Finite element discretization}

\label{5.1}
We now introduce a conforming finite element discretization of
(P$_{\alpha,\delta}^{(L)}$), which satisfies a discrete analogue of 
(\ref{eq:estimate-PadL}).  
As noted in the proof of Proposition \ref{prop:free-energy-PadL} above, 
we cannot use $S_h^0$ with the desirable property
(\ref{eq:inclusion1}) to approximate $\stradorL$, 
as we now have the added diffusion term. 
In the following, we choose 
 \begin{subequations}
 \begin{align}
 \Uh^1 &:= \Uh^2  \subset \U \quad \mbox{or} \quad \Uh^{1,+}
 \subset \U\,,
 \label{Vhmini} \\
 {\rm Q}_h^1 &=\{q \in C(\overline{\D}) \,:\, q \mid_{K_k} \in \PP_1 \quad
 k=1,\ldots, N_K\} \subset {\rm Q}\,,  
 \label{Qh1}\\
 \Shone &=\{\bphi \in [C(\overline{\D})]^{d \times d}_S 
 \,:\, \bphi \mid_{K_k} \in [\PP_1]^{d \times d}_S \quad
 k=1,\ldots, N_K\} \subset \S\, 
 \label{Sh1}\\
\mbox{and} \qquad
\Vhone &= \BRK{\bv \in\Uh^1 \,:\, \int_{\D} q \, {\div\bv} = 0 \quad  
\forall q\in\mathrm{Q}_h^1} \,;
\label{Vh1}
\end{align}
\end{subequations}
where $\Uh^2$ is defined as in (\ref{Vh2}) and, on recalling the barycentric
coordinate notation used in (\ref{bvarsig}),
\begin{align}
  \Uh^{1,+}&:=\left\{\bv \in [C(\overline{\D})]^d\cap \U \,:\, \bv \mid_{K_k} \in \left[\PP_1
  \oplus \mbox{span} \prod_{i=0}^d \eta^k_i \right]^d \quad
 k=1,\ldots, N_K\right\}\label{Vh1p}\,.
\end{align}

The velocity-pressure choice, $\Uh^2 \times {\rm Q}_h^1$, is the lowest order
Taylor-Hood element. It    
satisfies (\ref{eq:LBB}) with $\Uh^0$ and ${\rm Q}_h^0$
replaced by $\Uh^2$ and
${\rm Q}_h^1$, respectively,
provided, in addition to $\{{\mathcal T}_h\}_{h>0}$ being a regular family of meshes,
that each simplex has at least one vertex in $\D$, see p177 in Girault and 
Raviart\cite{GiraultRaviart} in the case $d=2$ and Boffi\cite{Boffi} in the 
case $d=3$.      
Of course, this is a very mild restriction on $\{{\mathcal T}_h\}_{h>0}$. 
The velocity-pressure choice, $\Uh^{1,+} \times {\rm Q}_h^1$, is called the
mini-element. It    
satisfies (\ref{eq:LBB}) with $\Uh^0$ and ${\rm Q}_h^0$
replaced by $\Uh^{1,+}$ and
${\rm Q}_h^1$, respectively;
see Chapter II, Section 4.1 in Girault and 
Raviart\cite{GiraultRaviart} in the case $d=2$ 
and Section 4.2.4 in Ern and Guermond\cite{ern-guermond:2004} in the 
case $d=3$.  
Hence for both choices of $\Uh^1$, it follows that
for all $\bv \in \Uz$ there exists a sequence $\{\bv_h\}_{h>0}$, with
$\bv_h \in \Vhone$, such that
\begin{align}
\lim_{h \rightarrow 0_+} \|\bv-\bv_h\|_{H^1(\D)} =0\,.
\label{Vhconv} 
\end{align}  
 
We recall the well-known local inverse inequality
for ${\rm Q}^1_h$
\begin{align}
\|q\|_{L^{\infty}(K_k)} &\leq C\,|K_k|^{-1}\,\int_{K_k} |q| 
\textbf{}\qquad \forall q \in {\rm Q}^1_h, \qquad
k=1,\dots,N_K  
\nonumber
\\
\Rightarrow \qquad
\|\bchi\|_{L^{\infty}(K_k)} &\leq C\,|K_k|^{-1}\,\int_{K_k} \|\bchi\| \qquad \forall 
\bchi \in {\rm S}^1_h, \qquad
k=1,\dots,N_K. 
\label{inverse}
\end{align}
We recall a similar well-known local inverse inequality for $\Vhone$
\begin{align}
\|\grad \bv\|_{L^2(K_k)} \leq C\, h_k^{-1} \|\bv\|_{L^2(K_k)}
\qquad \forall 
\bv \in \Vhone, \qquad
k=1,\dots,N_K.  
\label{Vinverse}
\end{align} 

We introduce the interpolation operator $\pi_h : C(\overline{\D})
\rightarrow {\rm Q}^1_h$, and extended naturally to 
$\pi_h : [C(\overline{\D})]^{d \times d}_S
\rightarrow {\rm S}^1_h$,
such that for all $\eta \in C(\overline{\D})$
and $\bphi \in [C(\overline{\D})]^{d \times d}_S$ 
\begin{align}
\pi_h \eta( P_p) =\eta(P_p) 
\qquad
\mbox{and}
\qquad  
\pi_h \bphi( P_p) =\bphi(P_p) 
\qquad p=1,\dots, N_P,  
\label{eq:pi1h}
\end{align}  
where $\{P_p\}_{p=1}^{N_P}$ are the vertices of $\mathcal{T}_h$.
As $\bphi \in \Shone$ does not imply that  
$\GdorL'(\bphi) \in \Shone$, we have to test the finite element 
approximation of the (P$_{\alpha,\delta}^{(L)}$) version 
of (\ref{eq:aoldroyd-b-sigma2}) with 
$\I - \pi_h[\GdL'(\stradh^{(L,)n})] \in \Shone$, where  
$\stradh^{(L,)n}\in \Shone$ is our finite element approximation
to $\stradorL$ at time level $t_{n}$.
This approximation of the (P$_{\alpha,\delta}^{(L)}$) version 
of (\ref{eq:aoldroyd-b-sigma2})
has to be constructed to mimic the results 
(\ref{PdaLener0})--(\ref{PdaLener2}),
when tested with $\I - \pi_h[\GdorL'(\stradh^{(L,)n})] \in \Shone$.

In order to mimic (\ref{PdaLener0})
we shall assume from now on that the family of meshes, $\{\mathcal{T}_h\}_{h>0}$, 
for the polytope
$\D$ 
consists of non-obtuse simplices only,
i.e. all dihedral angles of any simplex in
$\mathcal{T}_h$
are less than or equal to
$\frac{\pi}{2}$. 
Of course, the construction of such a non-obtuse mesh in the case 
$d=3$ is not straightforward for a general polytope ${\cal D}$.
We then have the following result.

\begin{lemma} \label{lem:inf-bound}
Let $g \in C^{0,1}(\mathbb R)$ be monotonically increasing  
with Lipschitz constant $g_{\rm Lip}$.
As $\mathcal{T}_h$ consists of only non-obtuse simplices,
then we have for all $q \in {\rm Q}_h^1 $, $\bphi\in\Shone$ that
\begin{align} 
g_{\rm Lip} \,\grad\pi_h[g(q)] \cdot \grad q
&\ge 
\|\grad\pi_h[g(q)]\|^2 \quad 
\mbox{and} \quad
g_{\rm Lip} \,\grad\pi_h[g(\bphi)] :: \grad \bphi
\ge 
\|\grad\pi_h[g(\bphi)]\|^2 
\nonumber \\
&
\quad 
\hspace{1.5in}
\mbox{on } K_k, \quad
k=1, \dots, N_K. 
\label{eq:inf-bound}
\end{align}
\end{lemma}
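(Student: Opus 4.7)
My plan is to reduce the problem to a finite-dimensional inequality on each simplex via the barycentric coordinate representation, then exploit the non-obtuse mesh hypothesis through the classical fact that the gradients of barycentric coordinates of a non-obtuse simplex have non-positive mutual inner products. I will handle the scalar case first, then extend componentwise to the matrix case since $\Shone$ is defined entry-wise.

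Fix $K_k$ with vertices $\{P_i^k\}_{i=0}^d$ and barycentric coordinates $\{\eta_i^k\}_{i=0}^d$. For $q \in {\rm Q}_h^1$ set $q_i:=q(P_i^k)$, so $q|_{K_k}=\sum_i q_i \eta_i^k$ and, by (\ref{eq:pi1h}), $\pi_h[g(q)]|_{K_k}=\sum_i g(q_i)\eta_i^k$. Introducing $A_{ij}:=\grad\eta_i^k\cdot\grad\eta_j^k$, the partition of unity $\sum_i \eta_i^k\equiv 1$ implies $\sum_j A_{ij}=0$, which lets me rewrite
\begin{align*}
\grad\pi_h[g(q)]\cdot\grad q \;=\; \sum_{i,j} g(q_i)q_j A_{ij}
\;=\; \tfrac12 \sum_{i\neq j}(-A_{ij})\,[g(q_i)-g(q_j)]\,(q_i-q_j),
\end{align*}
and similarly $\|\grad\pi_h[g(q)]\|^2 = \tfrac12\sum_{i\neq j}(-A_{ij})[g(q_i)-g(q_j)]^2$. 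The crucial geometric input is that on a non-obtuse simplex $A_{ij}\le 0$ for $i\neq j$ (this is the standard characterization used, e.g., in proofs of the discrete maximum principle), so all the coefficients $-A_{ij}$ are non-negative.

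It then remains to show termwise that $g_{\rm Lip}[g(q_i)-g(q_j)](q_i-q_j)\ge [g(q_i)-g(q_j)]^2$. Since $g$ is monotonically increasing, $g(q_i)-g(q_j)$ and $q_i-q_j$ share their sign, while the Lipschitz bound gives $|g(q_i)-g(q_j)|\le g_{\rm Lip}|q_i-q_j|$; multiplying these yields the desired pointwise inequality, and summing over $i\neq j$ against the non-negative weights $-A_{ij}$ proves the first assertion of (\ref{eq:inf-bound}) on $K_k$.

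For the tensor statement, note that $\pi_h$ acts entry-by-entry on $\Shone$, and the inner products $\cdot$ and $::$ in (\ref{ipmat})--(\ref{ipgrad}) sum over the $(d\times d)_S$ components. Applying the scalar inequality just established to each symmetric component $\bphi_{ab}\in {\rm Q}_h^1$ and summing over $a,b$ gives the matrix version. The only genuine obstacle is the non-obtuseness property $A_{ij}\le 0$ for $i\ne j$; once that is invoked, the argument is the chain of elementary manipulations sketched above. No further hypothesis on $g$ beyond monotonicity and the Lipschitz bound is needed, and the inequality holds pointwise (in fact, as a constant) on each $K_k$, which is the form in which the lemma will be used in the subsequent free-energy estimate.
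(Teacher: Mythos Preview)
Your scalar argument is correct and essentially identical to the paper's: both use the partition-of-unity identity to rewrite the bilinear form as a sum over vertex pairs weighted by $-A_{ij}=-\grad\eta_i^k\cdot\grad\eta_j^k\ge 0$, and then apply the scalar inequality $g_{\rm Lip}[g(q_i)-g(q_j)](q_i-q_j)\ge[g(q_i)-g(q_j)]^2$ termwise.

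There is, however, a genuine gap in your matrix case. You write that ``applying the scalar inequality just established to each symmetric component $\bphi_{ab}\in{\rm Q}_h^1$ and summing over $a,b$ gives the matrix version''. This would be valid only if $[g(\bphi)]_{ab}=g(\bphi_{ab})$, i.e.\ if $g$ acted entrywise on matrices. But in this paper $g(\bphi)$ is defined via the spectral calculus (\ref{eq:tensor-g}): one diagonalizes $\bphi=\mathbf{O}^T\mathbf{D}\mathbf{O}$ and sets $g(\bphi)=\mathbf{O}^Tg(\mathbf{D})\mathbf{O}$. In general $[g(\bphi)]_{ab}\neq g(\bphi_{ab})$, so the componentwise reduction fails.

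What is actually needed, once you have the analogue of (\ref{baibbj}), is the pointwise matrix inequality
\[
g_{\rm Lip}\,\bigl(g(\bphi(P_i^k))-g(\bphi(P_j^k))\bigr):\bigl(\bphi(P_i^k)-\bphi(P_j^k)\bigr)\ \ge\ \bigl\|g(\bphi(P_i^k))-g(\bphi(P_j^k))\bigr\|^2
\]
for symmetric $\bphi(P_i^k),\bphi(P_j^k)$. This does hold, but its proof requires simultaneous diagonalization bookkeeping: writing $\bphi(P_i^k)=\mathbf{O}_i^T\mathbf{D}_i\mathbf{O}_i$, $\bphi(P_j^k)=\mathbf{O}_j^T\mathbf{D}_j\mathbf{O}_j$, one expands both sides in terms of the orthogonal matrix $\mathbf{O}=\mathbf{O}_i\mathbf{O}_j^T$ and reduces to the scalar inequality on eigenvalue pairs, weighted by $[\mathbf{O}_{rs}]^2\ge 0$. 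The paper carries out exactly this computation earlier in (\ref{Entproof2}) when proving (\ref{matrix:Lipschitz}); the present lemma then invokes that same mechanism. Your proposal needs to replace the componentwise step by this diagonalization argument.
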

\begin{proof}
Let $K_k$ have vertices $\{P^k_j\}_{j=0}^d$, and let $\eta^k_j(\xx)$ be the basis 
functions on $K_k$ associated with $\rm{Q}^1_h$ and $\Shone$, i.e.
$\eta^k_j \mid_{K_k} \in \mathbb{P}_1$ and $\eta^k_j(P^k_i)=\delta_{ij}$,
$i,j = 0,\dots,d$. As $K_k$ is non-obtuse it follows that
\begin{align}
\grad \eta^k_i \cdot \grad \eta^k_j \leq 0 \qquad \mbox{on }K_k, \qquad
i,j = 0, \dots , d, \ \mbox{ with }\ i \neq j\,.
\label{obtuse}
\end{align}  
We note  that
\begin{align}
&\sum_{j=0}^d \eta^k_j \equiv 1 \quad \mbox{on}\quad K_k
\quad \Rightarrow \quad \nonumber \\
& \hspace{1in} \|\grad \eta^k_i\|^2 = - \sum_{j=0,\ j\neq i}^d  
\grad \eta^k_i \cdot \grad \eta^k_j \quad \mbox{on}\quad K_k,
\quad i=0,\dots,d\,.
\label{sumto1}
\end{align}
Hence for $a_i,b_i \in \mathbb R$, $i=0, \dots d$, we have that 
\begin{align} \grad(\sum_{i=0}^d a_i\,\eta^k_i) \cdot  
\grad(\sum_{j=0}^d b_j\,\eta^k_j)
& = \sum_{i=0}^d \left[ a_i\,b_i\,\|\grad \eta^k_i\|^2
+ \sum_{j=0,\ j\neq i}^d  
a_i \,b_j \grad \eta^k_i \cdot \grad \eta^k_j \right]
\nonumber
\\
& =
-\sum_{i=0}^d \sum_{j=0,\ j\neq i}^d  
a_i \,(b_i-b_j)\, \grad \eta^k_i \cdot \grad \eta^k_j
\nonumber \\
& =
-\sum_{i=0}^d \sum_{j> i}^d  
(a_i-a_j) \,(b_i-b_j)\, \grad \eta^k_i \cdot \grad \eta^k_j\,.
\label{aibj}
\end{align}
Similarly for $\bolda_i,\boldb_i \in \RS$, 
$i=0, \dots, d$, we have that 
\begin{align} \grad(\sum_{i=0}^d \bolda_i\,\eta^k_i) ::  
\grad(\sum_{j=0}^d \boldb_j\,\eta^k_j)
&=
-\sum_{i=0}^d \sum_{j> i}^d  
\left[(\bolda_i-\bolda_j): (\boldb_i-\boldb_j)
\right]
\, \grad \eta^k_i \cdot \grad \eta^k_j\,.
\label{baibbj}
\end{align}
The desired result (\ref{eq:inf-bound}) 
then follows immediately from (\ref{aibj}), (\ref{baibbj}), (\ref{obtuse})
and our assumptions on $g$.
\end{proof}

In order to mimic (\ref{PadLener1}) and (\ref{PdaLener2}), we need to use 
numerical integration (vertex sampling).
We note the following results.
As the basis functions associated with ${\rm Q}^1_h$ and $\Shone$
are nonnegative and sum to unity everywhere, we have 
for $k=1,\ldots,N_K$ that  
\begin{align}
\|[\pi_h \bphi](\xx)\|^2 &\leq (\pi_h[\,\|\bphi\|^2\,])(\xx)
\quad \forall \xx \in K_k, \quad \forall 
\bphi \in [C(\overline{K_k})]^{d \times d}\,.
\label{interpinf} 
\end{align}
In addition, we have for $k=1,\ldots,N_K$ that
\begin{align}
\int_{K_k} \|\bchi\|^2 \leq \int_{K_k} \pi_h[\,\|\bchi\|^2] 
\leq C \int_{K_k} \|\bchi\|^2 \qquad  \forall \bchi \in \Shone.
\label{eqnorm}
\end{align}
The first inequality in (\ref{eqnorm}) follows immediately from
(\ref{interpinf}), and the second from applying (\ref{inverse})
and a Cauchy--Schwarz inequality.

In order to mimic (\ref{PadLeber1a}), we have to carefully construct our finite
element approximation of the convective term  
in the (P$_{\alpha,\delta}^{(L)}$) version of (\ref{eq:aoldroyd-b-sigma2}) 
Our construction is a non-trivial extension of an approach that has been used 
in the finite element approximation of fourth-order degenerate nonlinear parabolic
equations, such as the thin film equation; see e.g.\ 
Gr\"{u}n and Rumpf\cite{GrunRumpf}
and Barrett and N\"{u}rnberg.\cite{surf2d} 
Let $\{\be_i\}_{i=1}^d$ be the orthonormal vectors 
in $\R^d$, 
such that the $j^{th}$ component of $\be_i$ is $\delta_{ij}$, $i,j=1,\dots,d$.
Let $\widehat{K}$   
be the standard open reference simplex in $\R^d$
with vertices $\{\widehat{P}_i\}_{i=0}^d$, 
where $\widehat{P}_0$ is the origin and 
$\widehat{P}_i = \be_i$, $i=1,\dots,d$.
Given a simplex $K_k\in\mathcal{T}_h$ with vertices $\{P^k_i\}_{i=0}^d$,
then there exists 
a non-singular matrix $B_k$ 
such that the linear mapping
\beq \label{eq:mapping-RK}
\mathcal{B}_k : \widehat{\xx}\in\R^d \mapsto P_0^k + B_k\widehat{\xx} \in \R^d 
\eeq
maps vertex $\widehat{P}_i$  
to vertex $P^k_i$, $i=0,\dots,d$. 
Hence $\mathcal{B}_k$ maps $\widehat{K}$ 
to $K_k$.
For all $\eta\in  
{\rm Q}^1_h$
and $K_k \in \mathcal{T}_h$, we define
\begin{align}
\label{eq:mapping-derivative}
\widehat{\eta}(\widehat{\xx}) := \eta\brk{\mathcal{B}_k(\widehat{\xx})} \quad 
\forall \widehat{\xx}\in\widehat{K}
\quad \Rightarrow
\quad
\grad \eta(
\mathcal{B}_k(\widehat \xx)
) = 
(B_k^T)^{-1}  
\widehat{\grad} \widehat{\eta}
(\widehat \xx)
\quad \forall \widehat \xx\in \widehat{K}\,,
\end{align}
where for all  $\widehat \xx\in \widehat{K}$ 
\begin{align}
\label{diff}
[\widehat{\grad} \widehat{\eta}
(\widehat \xx)]_j = \frac{\partial}{\partial \widehat x_j}\widehat{\eta}
(\widehat \xx) = \widehat \eta(\widehat{P}_j)-\widehat \eta (\widehat{P}_0)
= \eta({P}_j^k)-\eta ({P}_0^k)\qquad j=1,\dots,d.
\end{align}
Such notation is easily extended to $\bphi \in \S^1_h$.

Given $\bphi \in \Shone$ and $K_k \in \mathcal{T}_h$, then first, for  
$j=1, \dots, d$, we find  
$\widehat \Lambda_{\delta,j}^{(L)}(\widehat \bphi) \in \RS$,
which depends continuously on $\bphi$,
such that
\begin{align}
\widehat \Lambda_{\delta,j}^{(L)}(\widehat \bphi) :  
\frac{\partial}{\partial \widehat x_j}
\widehat{\pi}_h[\GdorL'(\widehat \bphi)] =
\frac{\partial}{\partial \widehat x_j}
\widehat{\pi}_h[\tr(\HdorL(\GdorL'(\widehat \bphi)))]
\qquad \mbox{on } \widehat K,
\label{hLambdaj}
\end{align}
where $(\widehat \pi_h \widehat \eta)(\widehat \xx) \equiv
(\pi_h \eta)(\mathcal{B}_k \widehat \xx)$ for all
$\widehat \xx \in \widehat K$ and $\eta \in C(\overline{K_k})$.
This leads to a unique choice of  $\widehat \Lambda_{\delta,j}^{(L)}(\widehat \bphi)$.
For the construction of $\widehat \Lambda_{\delta,j}^{(L)}(\widehat \bphi)$
in the simpler scalar case $(d=1)$, see p329 in Barrett and N\"{u}rnberg.\cite{surf2d} 
To construct $\widehat \Lambda_{\delta,j}^{(L)}(\widehat \bphi)$
satisfying (\ref{hLambdaj}),
we note the following. 
We have from (\ref{eq:HdL}), (\ref{eq:HBdL}) 
and (\ref{gconcave2}) that
\begin{align}
&\BdorL(\bphi(P_j^k)) :
(\GdorL'(\bphi(P_j^k)) -
\GdorL'(\bphi(P_0^k)))
\nonumber \\
& \hspace{1.3in}\leq 
\tr(\HdorL(\GdorL'(\bphi(P_j^k)) -
\HdorL(\GdorL'(\bphi(P_0^k)))
\nonumber
\\
& \hspace{1.3in}\leq
\BdorL(\bphi(P_0^k)) :
(\GdorL'(\bphi(P_j^k)) -
\GdorL'(\bphi(P_0^k))).
\label{hLambdajineq}
\end{align}
Next we note from
(\ref{eq:HdL}), (\ref{eq:HBdL}), (\ref{matrix:Lipschitz}) 
and (\ref{ipmat})
that
\begin{align}
&-(\BdL(\bphi(P_j^k))-
\BdL(\bphi(P_0^k))):
(\GdL'(\bphi(P_j^k)) -
\GdL'(\bphi(P_0^k)))
\nonumber \\
& \hspace{2in}
\geq
L^{-2} \,\|\BdL(\bphi(P_j^k))-
\BdL(\bphi(P_0^k))\|^2\,;
\label{betajtr}
\end{align}
and so the left-hand side is zero if and only if 
$\BdL(\bphi(P_j^k))=\BdL(\bphi(P_0^k))$.
Similarly, we see from 
(\ref{eq:HdL}), (\ref{eq:HBdL}) and the proof of (\ref{matrix:Lipschitz});
that is, (\ref{Entproof2}); that
\begin{align}
&-(\Bd(\bphi(P_j^k))-
\Bd(\bphi(P_0^k))):
(\Gd'(\bphi(P_j^k)) -
\Gd'(\bphi(P_0^k))) \geq 0
\label{betajtrnoL} 
\end{align}
with equality if and only if  
$\Bd(\bphi(P_j^k))=\Bd(\bphi(P_0^k))$.
Hence, on noting (\ref{diff}), 
(\ref{hLambdajineq}), (\ref{betajtr}), 
(\ref{betajtrnoL})
and (\ref{ipmat}), we have that
\begin{subequations}
\begin{align}
\widehat \Lambda_{\delta,j}^{(L)}(\widehat \bphi)
&:= (1-\lambda_{\delta,j}^{(L)})\BdorL(\bphi(P_j^k))
+ \lambda_{\delta,j}^{(L)}\BdorL(\bphi(P_0^k))
\nonumber \\
&\hspace{0.1in} \mbox{if} \quad  (\BdorL(\bphi(P_j^k))-
\BdorL(\bphi(P_0^k))):
(\GdorL'(\bphi(P_j^k)) -
\GdorL'(\bphi(P_0^k))) \neq 0\,,
\label{hLambdajdef}   
\\
\widehat \Lambda_{\delta,j}^{(L)}(\widehat \bphi)
&:= \BdorL(\bphi(P_j^k))
= \BdorL(\bphi(P_0^k))
\nonumber \\
& \hspace{0.1in} \mbox{if} \quad  (\BdorL(\bphi(P_j^k))-
\BdorL(\bphi(P_0^k))):
(\GdorL'(\bphi(P_j^k)) -
\GdorL'(\bphi(P_0^k))) = 0
\label{hLambdajdefb}   
\end{align}
\end{subequations}
satisfies (\ref{hLambdaj}) for $j=1,\dots,d$;
where $\lambda_{\delta,j}^{(L)} \in [0,1]$ is defined as  
\begin{align*}
\lambda_{\delta,j}^{(L)} &:= \Bigl[\tr(\HdorL(\GdorL'(\bphi(P_j^k)) -
\HdorL(\GdorL'(\bphi(P_0^k)))\Bigr.\nonumber \\
& \qquad \frac {\Bigl. \qquad \qquad \qquad -\BdorL(\bphi(P_j^k)):
(\GdorL'(\bphi(P_j^k)) -\GdorL'(\bphi(P_0^k)))\Bigr]}
{(\BdorL(\bphi(P_0^k))-
\BdorL(\bphi(P_j^k))):
(\GdorL'(\bphi(P_j^k)) -
\GdorL'(\bphi(P_0^k)))}\,.
\end{align*}
Furthermore, $\widehat \Lambda_{\delta,j}^{(L)}(\widehat \bphi)
\in \RS$, $j=1,\dots,d$, depends continuously on $\bphi \mid_{K_k}$.

Therefore given $\bphi \in \Shone$, we introduce, for $m,p =1, \dots, d$,
\begin{align}
\Lambda_{\delta,m,p}^{(L)}(\bphi) 
&= \sum_{j=1}^d
[(B_k^T)^{-1}]_{mj}\, \widehat \Lambda_{\delta,j}^{(L)}(\widehat \bphi)
\,[B_k^T]_{jp} \in \RS
\qquad \mbox{on } K_k,\nonumber \\
& \hspace{2.5in} \qquad k=1,\dots,N_K.  
\label{Lambdampdef}
\end{align}
It follows from 
(\ref{Lambdampdef}),
(\ref{hLambdaj}) and (\ref{eq:mapping-derivative}) that
\begin{align}
\Lambda_{\delta,m,p}^{(L)}(\bphi) \approx \BdorL(\bphi) \,\delta_{mp}
\qquad  m,p=1,\dots,d\,;
\label{Lambdampap}
\end{align}
and for $m=1,\dots,d$ 
\begin{align}
&\sum_{p=1}^d 
\Lambda_{\delta,m,p}^{(L)}(\bphi) : 
\frac{\partial}{\partial x_p}\pi_h[\GdorL'(\bphi)]
=\frac{\partial}{\partial x_m}\pi_h [\tr(\HdorL(\GdorL'(\bphi)))]
\qquad \mbox{on } K_k, 
\nonumber \\
\label{Lambdaj}
&\hspace{3.2in}
\qquad k=1,\dots,N_K.  
\end{align}
For a more precise version of (\ref{Lambdampap}), see Lemma \ref{lemMXitt}
below. 
Finally, as the partitioning ${\mathcal T}_h$ consists of regular simplices, we have that
\begin{align}
\|(B_k^T)^{-1}\|\,\|B_k^T\| \leq C, \qquad k=1,\dots,N_K\,.
\label{Breg}
\end{align}
Hence, it follows  from (\ref{Lambdampdef}), (\ref{Breg}), (\ref{hLambdajdef},b) 
and (\ref{BdLphi}) that
\begin{align}
\|\Lambda_{\delta,m,p}^{L}(\bphi)\|_{L^{\infty}(\D)} \leq C\,L \qquad
\forall \bphi \in \Shone.
\label{LdmpLinf}
\end{align}

\subsection{A free energy preserving approximation, (P$^{(L,)\Delta t}_{\alpha,\delta,h}$),
of (P$^{(L)}_{\alpha,\delta}$)}

In addition to the assumptions on the finite element discretization stated
in subsection \ref{5.1}, and our definition of $\Delta t$ in subsection \ref{FEd},    
we shall assume that
there exists a $C \in {\mathbb R}_{>0}$ such that
\begin{align}
\Delta t_n \leq C\, \Delta t_{n-1}, \qquad n=2, \dots, N,\qquad
\mbox{as} \quad \Delta t \rightarrow 0_+.
\label{Deltatqu}
\end{align}
With $\Delta t_1$ and $C$ as above, let $\Delta t_0 \in {\mathbb R}_{>0}$ be such that
$\Delta t_1 \leq C \Delta t_0$.
Given initial data satisfying (\ref{freg}), we choose
$\buh^{0} \in \Vhone$ and $\strh^{0} \in \Shone$
throughout the rest of this paper  
such that
\begin{subequations}
\begin{alignat}{2}
\intd \left[ \buh^{0}  \cdot \bv  + \Delta t_0
\grad  \buh^{0} : \grad \bv \right]
&=   \intd \bu^0 \cdot \bv \qquad
&&\forall \bv \in \Vhone\,,
\label{proju0} \\
\intd \left[ \pi_h[\strh^{0} : \bchi]  + \Delta t_0
\grad  \strh^{0} :: \grad \bchi \right]
&=
\intd \strs^{0} : \bchi  
&& \forall \bchi \in \Shone\,.
\label{projp0}
\end{alignat}
\end{subequations}
It follows from (\ref{proju0},b), (\ref{eqnorm}) and (\ref{freg}) that
\begin{align}
&\int_{\D} \left[\, \|\buh^{0}\|^2 + \|\strh^{0}\|^2 +
\Delta t_0 \, \left[\|\grad \buh^{0}\|^2 + \|\grad \strh^0\|^2
\right]\,\right] 
\leq C\,.
 \label{idatah}
\end{align}
In addition, we note the following result.

\begin{lemma}
\label{lem:idatahspd}
For $p=1,\ldots, N_P$ we have that
\begin{align}
&\sigma_{\rm min}^0\, \|\bxi\|^2 \leq {\bxi}^T \strh^0(P_p) \,{\bxi} 
\leq \sigma_{\rm max}^0\, \|\bxi\|^2
\quad \forall \bxi \in {\mathbb R}^d\,.
\label{idatahspd}
\end{align}
\end{lemma}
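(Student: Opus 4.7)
The plan is to reduce the tensorial projection \eqref{projp0} to a scalar discrete elliptic problem in one direction at a time, and then invoke a discrete maximum principle of M-matrix type that is enabled by the non-obtuse mesh hypothesis imposed on $\mathcal{T}_h$. Fix an arbitrary $\bxi \in \R^d$ and define $v(\xx) := \bxi^T\strh^0(\xx)\bxi$, which belongs to $\mathrm{Q}_h^1$ because $\strh^0 \in \Shone$. The goal is to show $\sigma_{\rm min}^0 \Norm{\bxi}^2 \le v(P_p) \le \sigma_{\rm max}^0 \Norm{\bxi}^2$ at every vertex; since $\strh^0(P_p)$ is symmetric, this is equivalent to \eqref{idatahspd}.

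Let $\{\varphi_p\}_{p=1}^{N_P}$ denote the scalar nodal basis of $\mathrm{Q}_h^1$, i.e.\ $\varphi_p \in \mathrm{Q}_h^1$ with $\varphi_p(P_q)=\delta_{pq}$. For each vertex $P_p$, I would test \eqref{projp0} with the admissible tensor $\bchi = \varphi_p\,\bxi\bxi^T \in \Shone$. Using the vertex-sampling identity $\pi_h[\varphi_p\,v] = v(P_p)\,\varphi_p$ and the fact that $\grad\strh^0 :: \grad(\varphi_p\,\bxi\bxi^T) = \grad v\cdot\grad\varphi_p$, this reduces \eqref{projp0} to the scalar system
\begin{equation*}
 m_p\,v(P_p) + \Delta t_0 \intd \grad v\cdot\grad\varphi_p = \intd \varphi_p\,(\bxi^T\strs^0\bxi)
 \qquad p=1,\ldots,N_P,
\end{equation*}
where $m_p := \intd \varphi_p > 0$ is the lumped mass at $P_p$.

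Next I would cast this as the matrix equation $A V = b$, with $V_p := v(P_p)$, $A := M_L + \Delta t_0 K$, where $M_L = \mathrm{diag}(m_p)$ and $K_{pq} := \intd \grad\varphi_p\cdot\grad\varphi_q$, and $b_p := \intd \varphi_p(\bxi^T\strs^0\bxi)$. The non-obtuse assumption on each simplex, exactly as in \eqref{obtuse}, together with the partition-of-unity identity $\sum_q \varphi_q \equiv 1$ implying $\sum_q K_{pq}=0$, gives $K_{pq}\le 0$ for $p\neq q$ and $K_{pp} = -\sum_{q\neq p}K_{pq}\ge 0$. Hence $A$ has strictly positive diagonal, non-positive off-diagonal entries, and row sums $\sum_q A_{pq} = m_p > 0$; it is therefore a (weakly) diagonally dominant M-matrix, so $A^{-1}\ge 0$ entrywise.

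Finally, setting $c^+ := \sigma_{\rm max}^0\Norm{\bxi}^2$ and using $K\mathbf{1} = 0$, we have $A(c^+\mathbf{1})_p = c^+ m_p \ge \intd \varphi_p (\bxi^T\strs^0\bxi) = b_p$ by \eqref{freg} and $\varphi_p\ge 0$; since $A^{-1}\ge 0$, componentwise we obtain $V \le c^+\mathbf{1}$. The lower bound $V \ge \sigma_{\rm min}^0\Norm{\bxi}^2\,\mathbf{1}$ follows by the same argument applied to $c^-\mathbf{1} - V$. As $\bxi$ was arbitrary, this yields \eqref{idatahspd}. The main subtlety is justifying the M-matrix structure, which crucially requires the non-obtuse hypothesis from Section \ref{5.1}; the lumped-mass term $M_L$ is indispensable here, since without vertex sampling the consistent mass matrix would fail to preserve this discrete maximum principle in dimensions $d\ge 2$.
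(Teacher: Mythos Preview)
Your proof is correct and complete; the reduction to the scalar problem for $v=\bxi^T\strh^0\bxi$ is identical to the paper's first step, and your M-matrix argument is sound (strict diagonal dominance from $m_p>0$ together with $K_{pq}\le 0$ off-diagonal and $K\mathbf{1}=0$ is exactly what the non-obtuse hypothesis buys).

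The paper, however, carries out the maximum principle in a different, more variational style: after deriving the scalar identity for $z_h:=\bxi^T(\strh^0-\sigma_{\rm min}^0\I)\bxi\in{\rm Q}_h^1$, it tests with $\eta=\pi_h[z_h]_-$ and invokes Lemma~\ref{lem:inf-bound} with $g(\cdot)=[\,\cdot\,]_-$ (monotone, Lipschitz constant $1$) together with \eqref{eqnorm} to obtain $\intd\left[|\pi_h[z_h]_-|^2+\Delta t_0\|\grad\pi_h[z_h]_-\|^2\right]\le\intd z\,\pi_h[z_h]_-\le 0$, whence $\pi_h[z_h]_-\equiv 0$. Your M-matrix route and the paper's ``test with the negative part'' route are two standard and equivalent realizations of the discrete maximum principle on non-obtuse meshes; the paper's choice has the advantage of reusing Lemma~\ref{lem:inf-bound} (which is needed elsewhere anyway), while yours is more self-contained and makes the role of the lumped mass matrix $M_L$ explicit.
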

\begin{proof}
It follows from (\ref{projp0}) that 
\begin{align}
\intd \left[ \pi_h[(\strh^{0}-\strs^0_{\rm min}\I) : \bchi]  + \Delta t_0
\grad  (\strh^{0}-\strs^0_{\rm min}\I) :: \grad \bchi \right]
&=
\intd (\strs^{0}-\strs^0_{\rm min}\I) : \bchi  
\nonumber \\
& \hspace{0.3in}
\qquad \forall \bchi \in \Shone\,.
\label{projp0c}
\end{align}
Choosing $\bchi = \bxi\, \bxi^T \eta$, with $\eta \in {\rm Q}^1_h$ yields that $z_h := 
\bxi^T (\strh^{0}-\strs^0_{\rm min}\I) \bxi \in  {\rm Q}^1_h$ satisfies
\begin{align}
\intd \left[ \pi_h[z_h \,\eta]  + \Delta t_0
\grad z_h \cdot \grad \eta \right]
=
\intd z \, \eta  
\qquad \forall \eta \in {\rm Q}^h_1\,,
\label{projp0z}
\end{align}  
where $z := 
\bxi^T (\strs^{0}-\strs^0_{\rm min}\I) \bxi \in L^\infty(\D)$ 
and is non-negative on recalling (\ref{freg}).

Choosing $\eta=\pi_h[z_h]_{-}\in {\rm Q}^1_h$, it follows, on noting
the  ${\rm Q}^1_h$ version of (\ref{eqnorm}) and
(\ref{eq:inf-bound}) with $g(\cdot) = [\,\cdot\,]_{-}$, that
\begin{align}
\intd \left [\pi_h [z_h]_{-}]^2 + \Delta t_0 \,\|\grad \pi_h[z_h]_{-} \|^2
\right] & \leq \intd \left[ \pi_h \left[[z_h]_{-}^2 \right] + \Delta t_0
\grad z_h \cdot \grad \pi_h [z_h]_{-} \right]
\nonumber \\
&=
\intd z \,  \pi_h [z_h]_{-} \leq 0 \,.
\end{align} 
Hence $\pi_h[z_h]_{-} \equiv 0$ and so the first inequality in (\ref{idatahspd}) holds.
Repeating the above with $\strs^0_{\rm min}$ and $[\,\cdot\,]_{-}$ replaced by
$\strs^0_{\rm max}$ and $[\,\cdot\,]_{+}$, respectively, yields the second inequality 
in (\ref{idatahspd}).   
\end{proof}

Our approximation $(\mathrm{P}_{\alpha,\delta,h}^{(L,)\dt})$ 
of $(\mathrm{P}_{\alpha,\delta}^{(L)})$
is then:

({\bf P}$_{\alpha,\delta,h}^{(L,)\Delta t}$)
Setting 
$(\buhdLa^{(L),0},\strhdLa^{(L),0})
= (\buh^{0},\strh^{0})
\in\Vhone\times\Shone$, 
then for $n = 1, \ldots, N_T$ 
find $(\buhdLa^{(L,)n},$ $\strhdLa^{(L,)n})\in\Vhone\times\Shone$ 
such that for any test functions 
$(\bv,\bphi)\in\Vhone\times\Shone$
\begin{subequations}
\begin{align} 
\nonumber 
&\int_\D \biggl[ \Re\left(\frac{\buhdLa^{(L,)n}-\buhdLa^{(L,)n-1}}{\dt_{n}}\right)
\cdot \bv 
\\
\nonumber  
& \quad  
+ \frac{\Re}{2}\Brk{ \left( (\buhdLa^{(L,)n-1}\cdot\nabla)\buhdLa^{(L,)n}\right) 
 \cdot \bv - 
 \buhdLa^{(L,)n} \cdot \left( (\buhdLa^{(L,)n-1}\cdot\nabla)\bv \right)}
\\
& \quad
 + (1-\e) \gbuhdLa^{(L,)n}:\grad\bv + \frac{\e}{\Wi} \pi_h[\BdorL(\strhdLa^{(L,)n})] : 
 \grad\bv 
\biggr] = 
\langle \f^n,\bv\rangle_{H^1_0(\D)}\,,
\label{eq:PdLaha}
\\ 
\nonumber
&\int_\D \pi_h \left[\left(\frac{\strhdLa^{(L,)n}-\strhdLa^{(L,)n-1}}{\dt_{n}}\right) 
: \bphi + \frac{1}{\Wi}\left(\strhdLa^{(L,)n}-\I\right): \bphi \right]
\\ \nonumber
&\quad +  \int_D \left[ \alpha \grad\strhdLa^{(L,)n}::\grad\bphi
- 2
\gbuhdLa^{(L,)n}: \pi_h[\bphi\,\BdorL(\strhdLa^{(L,)n})\right]\\
& \quad 
- \int_\D \sum_{m=1}^d \sum_{p=1}^d 
[\buhdLa^{(L,)n-1}]_m \,\Lambda^{(L)}_{\delta,m,p}(\strhdLa^{(L,)n})
: \frac{\partial \bphi}{\partial \xx_p}
= 0 \,.
\label{eq:PdLahb}
\end{align}
\end{subequations}

In deriving (P$^{(L,)\Delta t}_{\alpha,\delta,h}$), we have noted
 (\ref{conv0c}),
 (\ref{eq:symmetric-tr}) and (\ref{Lambdampdef}).

Before proving existence of a solution to (P$^{(L,)\Delta t}_{\alpha,\delta,h}$),
we first derive a discrete analogue of the energy estimate (\ref{eq:estimate-PadL})
for (P$^{(L)}_{\alpha,\delta}$).

\subsection{Energy estimate}

On setting
\begin{align}
 \FdorLh
 (\bv,\bphi) &:= \frac{\Re}{2}\intd\|\bv\|^2 
 + \frac{\e}{2\Wi}\intd
 \pi_h[
 \tr\brk{\bphi-\GdorL(\bphi)-\I}]\nonumber \\
& \hspace{2in}
\qquad \forall (\bv,\bphi) \in\Vhone \times \Shone \,,
 \label{eq:free-energy-PdLah}
\end{align}
we have the following discrete analogue of Proposition \ref{prop:free-energy-PadL}.
\begin{proposition} \label{prop:free-energy-PdLah}
For $n= 1, \ldots, N_T$, a solution $\brk{\buhdLa^{(L,)n},\strhdLa^{(L,)n}}
\in \Vhone\times\Shone$ to (\ref{eq:PdLaha},b), if it exists, satisfies 
\begin{align}
&\frac{\FdorLh(\buhdLa^{(L,)n},\strhdLa^{(L,)n})-\FdorLh(\buhdLa^{(L,)n-1},
\strhdLa^{(L,)n-1})}{\dt_{n}} 
+ \frac{ {\rm Re}}{2\dt_{n}}\intd\|\buhdLa^{(L,)n}-\buhdLa^{(L,)n-1}\|^2
\nonumber
\\
& \hspace{0.1in} 
+(1-\e)\intd\|\gbuhdLa^{(L,)n}\|^2
+\frac{\e}{2{\rm Wi}^2}\intd
\pi_h[\tr(\BdorL(\strhdLa^{(L,)n})+[\BdorL(\strhdLa^{(L,)n})]^{-1}-2\I)]
\nonumber \\
& \hspace{0.1in} 
+ \frac{\alpha\e\delta^2}{2{\rm Wi}}
\intd \|\grad\pi_h[\GdorL'(\strhdLa^{(L,)n})]\|^2
\nonumber \\
& \hspace{1in}
\le
\langle \f^n,\buhdLa^{(L,)n}\rangle_{H^1_0(\D)}
\nonumber \\
& \hspace{1in}
\le 
\frac{1}{2}(1-\e)\intd\|\gbuhdLa^{(L,)n}\|^2
+ \frac{1+C_P}{2(1-\e)} \|\f^{n}\|_{H^{-1}(\D)}^2
\,.
 \label{eq:estimate-PdLah}
\end{align}
\end{proposition}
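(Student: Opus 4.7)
The plan is to mimic the formal energy proof of Proposition \ref{prop:free-energy-PadL} at the discrete level, by testing \eqref{eq:PdLaha} with $\bv=\buhdLa^{(L,)n}\in\Vhone$ and \eqref{eq:PdLahb} with $\bphi=\frac{\e}{2\Wi}\brk{\I-\pi_h[\GdorL'(\strhdLa^{(L,)n})]}\in\Shone$, then summing. The antisymmetric convective form of the Navier-Stokes discretization kills itself on testing with $\buhdLa^{(L,)n}$, and (\ref{elemident}) produces the kinetic energy difference plus the numerical dissipation $\frac{\Re}{2\Delta t_n}\|\buhdLa^{(L,)n}-\buhdLa^{(L,)n-1}\|^2_{L^2}$, together with the viscous term $(1-\e)\|\gbuhdLa^{(L,)n}\|_{L^2}^2$ and a coupling term $\frac{\e}{\Wi}\int_\D \pi_h[\BdorL(\strhdLa^{(L,)n})]:\grad\buhdLa^{(L,)n}$; the final Young/Poincar\'e step giving the right-hand side of (\ref{eq:estimate-PdLah}) is the standard one from (\ref{fbound}).

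For the time-derivative contribution in the tested stress equation, I would work vertex-by-vertex: using the vertex-sampling rule and the concavity inequality (\ref{eq:concavity}) applied pointwise to $\strhdLa^{(L,)n}$ and $\strhdLa^{(L,)n-1}$, one shows
\[
\pi_h\!\brk{(\strhdLa^{(L,)n}-\strhdLa^{(L,)n-1}):(\I-\GdorL'(\strhdLa^{(L,)n}))} \ge \pi_h\!\brk{\tr(\strhdLa^{(L,)n}-\GdorL(\strhdLa^{(L,)n})) - \tr(\strhdLa^{(L,)n-1}-\GdorL(\strhdLa^{(L,)n-1}))},
\]
which, after multiplication by $\tfrac{\e}{2\Wi\Delta t_n}$ and integration, yields the discrete time-derivative of $\FdorLh$. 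For the relaxation term $\tfrac{1}{\Wi}(\strhdLa^{(L,)n}-\I)$, the same vertex-by-vertex trick together with (\ref{eq:inverse-Gd}) and (\ref{expositive}) reproduces the pointwise analogue of (\ref{remterm}), producing the required $\tr(\BdorL+[\BdorL]^{-1}-2\I)$ contribution after application of $\pi_h$.

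The two structural terms from the stress equation must be treated with the machinery set up in Section \ref{sec:Palphah}. The stress diffusion $\alpha\grad\strhdLa^{(L,)n}::\grad(\I-\pi_h[\GdorL'(\strhdLa^{(L,)n})])$ is bounded below using Lemma \ref{lem:inf-bound} applied element-wise with $g=-\GdorL'$, whose Lipschitz constant is $\delta^{-2}$ (by (\ref{matrix:Lipschitz})); this exactly delivers the $\frac{\alpha\e\delta^2}{2\Wi}\|\grad\pi_h[\GdorL'(\strhdLa^{(L,)n})]\|_{L^2}^2$ term, and is where the non-obtuse-mesh hypothesis is essential. For the upper-convective contribution $-2(\gbuhdLa^{(L,)n}):\pi_h[\bphi\,\BdorL(\strhdLa^{(L,)n})]$, the identity $(\I-\GdorL'(\strhdLa^{(L,)n}))\,\BdorL(\strhdLa^{(L,)n})=\BdorL(\strhdLa^{(L,)n})-\I$ from (\ref{eq:inverse-Gd}) is applied at each vertex of $\Shone$; this yields $\pi_h[(\I-\pi_h[\GdorL'])\BdorL]=\pi_h[\BdorL]-\I$, and then the $\I$ part vanishes using $\buhdLa^{(L,)n}\in\Vhone$, while the $\pi_h[\BdorL]$ part precisely cancels the coupling term coming from the tested momentum equation, as in (\ref{PdaLener2}).

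The hard part, and the whole reason for the technical construction of $\Lambda^{(L)}_{\delta,m,p}$ in \eqref{hLambdaj}--\eqref{Lambdampdef}, is the discrete convection term. Substituting $\bphi=\frac{\e}{2\Wi}(\I-\pi_h[\GdorL'(\strhdLa^{(L,)n})])$ into the $\Lambda$-term of \eqref{eq:PdLahb} and invoking the key identity (\ref{Lambdaj}), that sum collapses to $\frac{\e}{2\Wi}\int_\D \buhdLa^{(L,)n-1}\cdot\grad\pi_h[\tr(\HdorL(\GdorL'(\strhdLa^{(L,)n})))]$. Since $\pi_h[\tr(\HdorL(\GdorL'(\strhdLa^{(L,)n})))]\in {\rm Q}_h^1$ and $\buhdLa^{(L,)n-1}\in\Vhone$ is discretely divergence-free against ${\rm Q}_h^1$, an integration by parts (with the boundary term vanishing by $\buhdLa^{(L,)n-1}=\bzero$ on $\partial\D$) annihilates this term. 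This step is the discrete analogue of \eqref{PadLeber1a} and is the reason for choosing both the specific $\Lambda^{(L)}_{\delta,m,p}$ construction and the Taylor--Hood/mini pair with continuous ${\rm Q}_h^1$ pressures. Collecting all contributions and invoking (\ref{fbound}) with $\nu^2=(1-\e)/(1+C_P)$ delivers (\ref{eq:estimate-PdLah}).
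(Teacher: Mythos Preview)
Your proposal is correct and follows essentially the same approach as the paper's proof: the same test functions $\bv=\buhdLa^{(L,)n}$ and $\bphi=\frac{\e}{2\Wi}\brk{\I-\pi_h[\GdorL'(\strhdLa^{(L,)n})]}$, the same use of (\ref{elemident}), the concavity bound (\ref{eq:concavity}) and the identities (\ref{eq:inverse-Gd},d) at the vertices, Lemma~\ref{lem:inf-bound} with $g=-\GdorL'$ for the diffusion term, the key collapse (\ref{Lambdaj}) for the convection term followed by the discrete divergence-free property of $\buhdLa^{(L,)n-1}$ against $\mathrm{Q}_h^1$, and finally (\ref{fbound}) with $\nu^2=(1-\e)/(1+C_P)$. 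Your account is in fact more detailed than the paper's own very terse proof, which just lists these ingredients.
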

\begin{proof}
The proof is similar to that of  Proposition~\ref{prop:free-energy-Pdh},
we choose as test functions $\bv=\buhdLa^{(L,)n}\in\Vhone$ and 
$\bphi=\frac{\e}{2\Wi}\brk{\I-\pi_h[\GdorL'(\strhdLa^{(L,)n})]}\in\Shone$ 
in (\ref{eq:PdLaha},b), 
and obtain, on noting (\ref{elemident}), (\ref{eq:inverse-Gd},d,e),
(\ref{eq:inf-bound}) with $g=-G^{(L)'}_{\delta}$ having Lipschitz
constant $\delta^{-2}$, (\ref{Lambdaj}) and (\ref{eq:free-energy-PdLah}) 
that
\begin{align}
& \nonumber
\langle \f^n,\buhdLa^{(L,)n}\rangle_{H^1_0(\D)}
\\
& \qquad \geq
\frac{\FdorLh(\buhdLa^{(L,)n},\strhdLa^{(L,)n})-\FdorLh(\buhdLa^{(L,)n-1},
\strhdLa^{(L,)n-1})}
{\dt_{n}} 
+(1-\e)\intd\|\gbuhdLa^{(L,)n}\|^2
\nonumber \\
& \qquad \qquad 
+ \frac{ {\rm Re}}{2\dt_{n}}\intd\|\buhdLa^{(L,)n}-\buhdLa^{(L,)n-1}\|^2
+ \frac{\alpha\e\delta^2}{2{\rm Wi}}
\intd \|\grad\pi_h[\GdorL'(\strhdLa^{(L,)n})]\|^2 
\nonumber \\
& \qquad \qquad 
+ \frac{\e}{2{\Wi}^2}  \intd
\pi_h[ 
\tr(\BdorL(\strhdLa^{(L,)n})+[\BdorL(\strhdLa^{(L,)n})]^{-1}-2\I)]
\nonumber
\\
& \qquad \qquad
+ \intd \buhdLa^{(L,)n-1} \cdot \grad \pi_h[\tr(\HdorL(\GdorL'(\strhdLa^{(L,)n})))]
\,.
\label{eq:free-energy-PdhLa-demo1}
\end{align}
The first desired inequality in (\ref{eq:estimate-PdLah})
follows immediately from (\ref{eq:free-energy-PdhLa-demo1}) 
on noting (\ref{Vh1}), (\ref{Vh}), (\ref{spaces}) and that $\pi_h : C(\overline{\D}) 
\rightarrow {\rm Q}^1_h$.
The second inequality in (\ref{eq:estimate-PdLah}) follows immediately from (\ref{fbound})
with $\nu^2 = (1-\e)/(1+C_P)$. 
\end{proof}

\subsection{Existence of discrete solutions}

\begin{proposition} 
\label{prop:existence-PdLah}
Given $(\buhdLa^{(L,)n-1},\strhdLa^{(L,)n-1}) \in \Vhone \times \Sh^1$
and for any time step $\dt_{n} > 0$,
then there exists at least 
one solution $\brk{\buhdLa^{(L,)n},\strhdLa^{(L,)n}} \in \Vhone\times\Sh^1$ 
to 
(\ref{eq:PdLaha},b).
\end{proposition}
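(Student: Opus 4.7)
The plan is to follow the template of Proposition~\ref{prop:existence-Pdh}: construct a continuous mapping $\mathcal{F}: \Vhone\times\Shone \to \Vhone\times\Shone$ whose zeros are precisely the solutions of \eqref{eq:PdLaha},\eqref{eq:PdLahb}, and then apply the Brouwer fixed point theorem to a rescaling $\mathcal{G}_\gamma$ of $\mathcal{F}$ on a large ball. Concretely, I would equip $\Vhone\times\Shone$ with the inner product $((\bw,\bpsi),(\bv,\bphi))_\D := \intd [\bw\cdot\bv + \bpsi:\bphi]$ and define $\mathcal{F}(\bw,\bpsi)$ by the Riesz representation of the left-hand side of \eqref{eq:PdLaha},\eqref{eq:PdLahb} with $(\buhdLa^{(L,)n},\strhdLa^{(L,)n})$ replaced by $(\bw,\bpsi)$. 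Continuity of $\mathcal{F}$ reduces to the continuity of $\BdorL(\cdot)$, $\GdorL{}'(\cdot)$, $\pi_h$ and the mappings $\bphi\mapsto \Lambda^{(L)}_{\delta,m,p}(\bphi)$; the latter was built explicitly to depend continuously on the vertex values via \eqref{hLambdajdef},\eqref{hLambdajdefb} and \eqref{Lambdampdef}.

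Next I would mimic the computation of Proposition~\ref{prop:free-energy-PdLah}, but applied to a \emph{generic} pair $(\bw,\bpsi)\in\Vhone\times\Shone$ rather than a solution. Testing $\mathcal{F}(\bw,\bpsi)$ against $(\bw,\frac{\e}{2\Wi}(\I-\pi_h[\GdorL{}'(\bpsi)]))\in\Vhone\times\Shone$, the antisymmetric convective term on $\bw$ vanishes, the stress convective term becomes the exact gradient $\grad\pi_h[\tr(\HdorL(\GdorL{}'(\bpsi)))]$ by \eqref{Lambdaj} and integrates to zero against $\buhdLa^{(L,)n-1}\in\Vhone$, the coupling terms involving $\pi_h[\BdorL(\bpsi)]$ cancel via \eqref{eq:inverse-Gd} and \eqref{eq:symmetric-tr}, and Lemma~\ref{lem:inf-bound} controls the diffusion term. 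The outcome is the analogue of \eqref{eq:inequality1}, namely
\begin{align*}
&\Bigl(\mathcal{F}(\bw,\bpsi),\Bigl(\bw,\tfrac{\e}{2\Wi}(\I-\pi_h[\GdorL{}'(\bpsi)])\Bigr)\Bigr)_\D
\\
&\qquad\ge \frac{\FdorLh(\bw,\bpsi)-\FdorLh(\buhdLa^{(L,)n-1},\strhdLa^{(L,)n-1})}{\dt_n}
-\frac{1+C_P}{2(1-\e)}\|\f^n\|_{H^{-1}(\D)}^2,
\end{align*}
plus nonnegative dissipation terms involving $\|\grad\bw\|^2$, $\|\grad\pi_h[\GdorL{}'(\bpsi)]\|^2$ and the trace term from \eqref{eq:positive-term}.

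I would then suppose for contradiction that $\mathcal{F}$ has no zero in the ball $\mathcal{B}_\gamma=\{(\bv,\bphi)\in\Vhone\times\Shone : \|(\bv,\bphi)\|_\D\le\gamma\}$, so $\mathcal{G}_\gamma(\bv,\bphi):=-\gamma\mathcal{F}(\bv,\bphi)/\|\mathcal{F}(\bv,\bphi)\|_\D$ is a continuous self-map of $\mathcal{B}_\gamma$ and Brouwer yields a fixed point $(\bw_\gamma,\bpsi_\gamma)$ of norm exactly $\gamma$. The finite-dimensional inverse inequality \eqref{inverse} on $\Shone$ gives $\|\bpsi_\gamma\|_{L^\infty(\D)}\le\mu_h\gamma$, and the lower bound \eqref{Entropy2} then forces $\FdorLh(\bw_\gamma,\bpsi_\gamma)\ge \min(\Re/2,\e/(4\Wi\mu_h\gamma))\,\gamma^2 - C$, exactly as in \eqref{bb1}. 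Combined with the energy inequality above, for $\gamma$ sufficiently large the test gives a strictly positive value, while the fixed-point relation $(\bw_\gamma,\bpsi_\gamma)=\mathcal{G}_\gamma(\bw_\gamma,\bpsi_\gamma)$ together with the same $L^\infty$ control and \eqref{Entropy2} force it to be strictly negative, as in \eqref{bb2}--\eqref{eq:other-hand}. This contradiction produces the desired zero of $\mathcal{F}$, hence a solution in $\Vhone\times\Shone$.

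The main obstacle is verifying that the energy manipulation carries through on $\Shone$ despite the loss of the property \eqref{eq:inclusion1} enjoyed by $\Sh^0$. Specifically, one must: (i) legitimately use $\I-\pi_h[\GdorL{}'(\bpsi)]$ as a test function (this lies in $\Shone$ by construction of $\pi_h$); (ii) use Lemma~\ref{lem:inf-bound} with $g=-\GdorL{}'$, whose Lipschitz constant is $\delta^{-2}$, to control the diffusion cross-term in the correct sign; and (iii) rely crucially on the careful design of $\Lambda^{(L)}_{\delta,m,p}$ through \eqref{hLambdaj}--\eqref{Lambdaj} so that the convection term is an exact gradient after pairing with $\pi_h[\GdorL{}'(\bpsi)]$, allowing integration by parts against the divergence-free $\buhdLa^{(L,)n-1}$. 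Once these three ingredients are in place, the Brouwer argument proceeds essentially verbatim from Proposition~\ref{prop:existence-Pdh}.
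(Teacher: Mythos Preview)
Your overall strategy matches the paper's proof almost exactly, including the choice of test function and the use of \eqref{Lambdaj} and Lemma~\ref{lem:inf-bound}. However, there is one concrete oversight: the inner product you place on $\Vhone\times\Shone$ must be the \emph{lumped} one,
\[
\bigl((\bw,\bpsi),(\bv,\bphi)\bigr)_\D^h := \intd\Bigl[\bw\cdot\bv + \pi_h[\bpsi:\bphi]\Bigr],
\]
not the plain $L^2$ pairing you wrote. The energy-estimate side of the Brouwer argument is unaffected by this choice (the scheme itself already carries the correct $\pi_h$'s, so testing against $(\bw,\frac{\e}{2\Wi}(\I-\pi_h[\GdorL'(\bpsi)]))$ produces \eqref{eq:inequality1aL} either way). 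The problem is on the fixed-point side. With your inner product, the fixed-point relation yields
\[
-\frac{\|\mathcal{F}\|_\D}{\gamma}\intd\Bigl[\|\bw_\gamma\|^2 + \tfrac{\e}{2\Wi}\,\bpsi_\gamma:(\I-\pi_h[\GdorL'(\bpsi_\gamma)])\Bigr],
\]
and the second integrand is \emph{not} controlled from below by $\frac12\|\bpsi_\gamma\|-d$ as in \eqref{Entropy2}, because the pairing $\bpsi_\gamma:\pi_h[\GdorL'(\bpsi_\gamma)]$ mixes values at different vertices of each simplex. A simple example makes this clear: on a single element with vertex values $M$ (large) and $\delta$ for a scalar $\bpsi_\gamma$, the cross term $\bpsi_\gamma(P_0)\,\GdorL'(\bpsi_\gamma(P_1))\sim M/\delta$ enters the exact integral and can drive the whole bracket negative, so the contradiction with \eqref{eq:one-handaL} never materialises.

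With the lumped inner product this difficulty disappears: the fixed-point side becomes
\[
-\frac{\|\mathcal{F}\|_\D^h}{\gamma}\intd\Bigl[\|\bw_\gamma\|^2 + \tfrac{\e}{2\Wi}\,\pi_h\bigl[\bpsi_\gamma:(\I-\GdorL'(\bpsi_\gamma))\bigr]\Bigr],
\]
which \emph{is} vertex-by-vertex, so \eqref{Entropy2} applies directly and the analogue of \eqref{bb2} goes through as \eqref{bb2aL} in the paper. Correspondingly, the ball should be taken in the norm $\|\cdot\|_\D^h$ and the inverse inequality used in the form \eqref{eq:norm_equivalenceaL}. Once you make this single adjustment, your outline is exactly the paper's proof.
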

\begin{proof}
The proof is similar to that of Proposition \ref{prop:existence-Pdh}.
We introduce the following inner product on 
the Hilbert space $\Vhone\times\Sh^1$
$$ \Scal{ (\bw,\bpsi) }{ (\bv,\bphi) }_\D^h = 
\intd \Brk{ \bw\cdot\bv + \pi_h[\bpsi:\bphi] } 
\qquad \forall (\bw,\bpsi),(\bv,\bphi) \in \Vhone\times\Sh^1\,.
$$
Given $(\buhdLa^{(L,)n-1},\strhdLa^{(L,)n-1})\in\Vhone\times\Sh^1$, let
$\mathcal{F}^h : \Vhone\times\Sh^1 \rightarrow \Vhone\times\Sh^1$ be such that for any
$(\bw,\bpsi) \in \Vhone\times\Sh^1$
\begin{align} 
\nonumber
&\Scal{\mathcal{F}^h(\bw,\bpsi)}{(\bv,\bphi)}_\D^h 
\\
&\qquad := \int_\D \Biggl[\Re \left(\frac{\bw-\buhdLa^{(L,)n-1}}
{\dt_{n}}\right) \cdot \bv
+ (1-\e) \grad\bw :\grad\bv + \frac{\e}{\Wi} \pi_h[\BdorL(\bpsi)] 
:\grad\bv   
\nonumber \\   
& \qquad \qquad     
  + \frac{\Re}{2} \left[\left((\buhdLa^{(L,)n-1}\cdot\nabla)\bw \right) \cdot \bv 
  - \bw \cdot \left((\buhdLa^{(L,)n-1}\cdot\nabla)\bv \right) \right]  
+ \alpha \grad \bpsi :: \grad \bphi
\nonumber
\\
& \qquad  \qquad  
- 2  \grad\bw : \pi_h[\bphi\,\BdorL(\bpsi)]
\nonumber
+ \pi_h\left[\left(\frac{\bpsi-\strhdLa^{(L,)n-1}}{\dt_{n}}\right):\bphi 
 + \frac{1}{\Wi}\left(\bpsi-\I\right): \bphi \right]
 \Biggr] 
\nonumber
\\
& \qquad \qquad 
- \intd \sum_{m=1}^d \sum_{p=1}^d 
[\buhdLa^{(L,)n-1}]_m \Lambda^{(L)}_{\delta,m,p}(\bpsi) :
\frac{\partial \bphi}{\partial \xx_p} 
- \langle \f^n, \bv \rangle_{H^1_0(\D)}
\nonumber \\
& \hspace{3in}
\quad \forall (\bv,\bphi) \in \Vhone \times \Sh^1 
\,.
\label{eq:mapping1}
\end{align}
A solution 
$(\buhdLa^{(L,)n},\strhdLa^{(L,)n})$
to (\ref{eq:Pdh},b), if it exists,
corresponds to a zero of $\mathcal{F}^h$. 
On recalling 
(\ref{Lambdampdef})
and (\ref{hLambdajdef},b), 
it is easily deduced that the mapping $\mathcal{F}^h$ is continuous.

For any $(\bw,\bpsi) \in \Vhone\times\Sh^1$, on choosing
$(\bv,\bphi) = \brk{\bw,\frac{\e}{2\Wi}\brk{\I-\pi_h[\GdorL'(\bpsi)]}}$,
we obtain analogously to (\ref{eq:estimate-PdLah}) that
\begin{align} \nonumber
&\Scal{\mathcal{F}^h(\bw,\bpsi)}
{\brk{\bw,\frac{\e}{2\Wi}\brk{\I-\pi_h[\GdorL'(\bpsi)]}}}_\D^h
\\
& \qquad \quad \ge \frac{\FdorLh(\bw,\bpsi)-\FdorLh(\buhdLa^{(L,)n-1},
\strhdLa^{(L,)n-1})}
{\dt_{n}} 
+ \frac{\Re}{2\dt_{n}}\intd\|\bw-\buhdLa^{(L,)n-1}\|^2
\nonumber
\\
& \qquad\qquad \qquad
+ \frac{1-\e}{2} 
\intd\|\grad\bw\|^2
+\frac{\e}{2\Wi^2}\intd\pi_h[\tr(\BdorL(\bpsi)+[\BdorL(\bpsi)]^{-1}-2\I)] 
\nonumber \\
& \qquad \qquad \qquad 
+\frac{\alpha \e \delta^2}{2 \Wi} \intd \|\grad \pi_h[\GdorL'(\bpsi)]\|^2
-\frac{1+C_P}{2(1-\e)}\Norm{\f^{n}}_{H^{-1}(\D)}^2 \,.
 \label{eq:inequality1aL}
\end{align}
 
Let
$$ \Norm{(\bv,\bphi)}_D^h := 
\left[((\bv,\bphi),(\bv,\bphi))_D^h\right]^{\frac{1}{2}} =
\brk{\intd\Brk{\|\bv\|^2+\pi_h[\,\|\bphi\|^2\,]} }^\frac12 \,. $$
If for any $\gamma \in \R_{>0}$, 
the continuous mapping $\mathcal{F}^h$ 
has no zero $(\buhdLa^{(L,)n},\strhdLa^{(L,)n})$,  
which lies in the ball
$$ \mathcal{B}_\gamma^h := \BRK{ (\bv,\bphi) \in \Vhone\times\Sh^1 \,: \, 
\Norm{(\bv,\bphi)}_\D^h\le\gamma } \,;$$
then for such $\gamma$, we can define the continuous mapping $\mathcal{G}_\gamma^h
: \mathcal{B}_\gamma^h \rightarrow  \mathcal{B}_\gamma^h$
such that for all $(\bv,\bphi) \in \mathcal{B}_\gamma^h$
$$ \mathcal{G}_\gamma^h(\bv,\bphi) := -\gamma 
\frac{\mathcal{F}^h(\bv,\bphi)}{\Norm{\mathcal{F}^h(\bv,\bphi)}_\D^h} \,. $$
By the Brouwer fixed point theorem, $\mathcal{G}_\gamma^h$ has at least 
one fixed point $(\bw_\gamma,\bpsi_\gamma)$ in $\mathcal{B}_\gamma^h$. 
Hence it satisfies
\begin{equation}\label{eq:fixed-pointaL}
\Norm{(\bw_\gamma,\bpsi_\gamma)}_\D^h=
 \Norm{\mathcal{G}_\gamma^h(\bw_\gamma,\bpsi_\gamma)}_\D^h=\gamma.
\end{equation}

On noting  
(\ref{inverse}), we have that  
there exists a $\mu_h \in \R_{>0}$
such that for all $\bphi \in \Sh^1$,
\begin{equation}\label{eq:norm_equivalenceaL} 
\|\pi_h[\,\|\bphi\|\,]\|_{L^\infty(\D)}^2
\leq \|\pi_h[\,\|\bphi\|^2]\|_{L^\infty(\D)}
\leq  
\mu_h^2 \int_\D \pi_h[\,\|\bphi\|^2\,].
\end{equation}
It follows from (\ref{eq:free-energy-PdLah}), (\ref{Entropy2}), 
(\ref{eq:norm_equivalenceaL})  
and (\ref{eq:fixed-pointaL}) 
that 
\begin{align} 
\nonumber
&\FdorLh(\bw_\gamma,\bpsi_\gamma) 
\\ 
& \hspace{0.5in}=
\frac{\Re}{2} \intd \|\bw_\gamma\|^2 + \frac{\e}{2\Wi}
\intd
\pi_h[\tr(\bpsi_\gamma-\GdorL(\bpsi_\gamma)-\I)]
\nonumber 
\\
& \hspace{0.5in}\ge
\frac{\Re}{2} \intd\|\bw_\gamma\|^2 + \frac{\e}{4\Wi}
\left[\intd \pi_h[\,\|\bpsi_\gamma\|\,]-2d|\D|\right]
\nonumber
\\
& \hspace{0.5in} \ge 
\frac{\Re}{2} \intd\|\bw_\gamma\|^2 + \frac{\e}{4\Wi\,\mu_h\gamma}
\|\pi_h[\,\|\bpsi_\gamma\|\,]\|_{L^\infty(\D)}
\left[\intd \pi_h[\,\|\bpsi_\gamma\|\,]\right]-\frac{\e d |\D|}{2\Wi}
\nonumber
\\
& \hspace{0.5in} \ge
\min\brk{\frac{\Re}{2},\frac{\e}{4\Wi\,\mu_h\gamma}}
\brk{ \intd\left[\,\|\bw_\gamma\|^2+ \pi_h[\,\|\bpsi_\gamma\|^2\,]\,\right] }
-\frac{\e d |\D|}{2\Wi}
\nonumber
\\
& \hspace{0.5in}= 
\min\brk{\frac{\Re}{2},\frac{\e}{4\Wi\,\mu_h\gamma}}
\gamma^2 
-\frac{\e d |\D|}{2\Wi}
\,.
\label{bb1aL} 
\end{align}
Hence 
for all $\gamma$ sufficiently large,
it follows from (\ref{eq:inequality1aL}) and (\ref{bb1aL})   
that
\beq \label{eq:one-handaL}
\Scal{\mathcal{F}^h(\bw_\gamma,\bpsi_\gamma)}{\brk{\bw_\gamma,\frac{\e}{2\Wi}\brk{\I
-\pi_h[\GdorL'(\bpsi_\gamma)]}}}_\D^h
\ge 0\,.
\eeq

On the other hand as $(\bw_\gamma,\bpsi_\gamma)$ is a fixed point 
of ${\mathcal G}_\gamma^h$, we have that
\begin{align}
\nonumber
&\Scal{\mathcal{F}^h(\bw_\gamma,\bpsi_\gamma)}{\brk{\bw_\gamma,\frac{\e}{2\Wi}
\brk{\I-\pi_h[\GdorL'(\bpsi_\gamma)]}}}_D^h
\\ 
& \hspace{0.3in} = 
-\frac{\Norm{\mathcal{F}^h(\bw_\gamma,\bpsi_\gamma)}_\D^h}{\gamma} 
\intd \left[ \|\bw_\gamma\|^2+ \frac{\e}{2\Wi} 
\pi_h[\bpsi_\gamma :\brk{\I-\GdorL'(\bpsi_\gamma)}]\right] \,.
\label{eq:whereasaL}
\end{align}
It follows from (\ref{Entropy2}), and similarly to (\ref{bb1aL}),
on noting (\ref{eq:norm_equivalenceaL}) and (\ref{eq:fixed-pointaL}) that
\begin{align}
&\intd \left[
\|\bw_\gamma\|^2 + \frac{\e}{2\Wi} \pi_h[\bpsi_\gamma :\brk{\I-\GdorL'(\bpsi_\gamma)}] 
\right] 
\nonumber \\
&\hspace{2in}\ge  
\intd \left[
\|\bw_\gamma\|^2 + \frac{\e}{4\Wi} \left[\pi_h[\,\|\bpsi_\gamma\|\,] - 2d \right]\right]
\nonumber
\\
& \hspace{2in} 
\ge \min\brk{1,\frac{\e}{4\Wi\,\mu_h \gamma}}\gamma^2 - \frac{\e d|\D|}{2\Wi}\,. 
\label{bb2aL}
\end{align}
Therefore on combining (\ref{eq:whereasaL}) and (\ref{bb2aL}), we have for all 
$\gamma$ sufficiently large that 
\begin{align} \label{eq:other-handaL}
\Scal{\mathcal{F}^h(\bw_\gamma,\bpsi_\gamma)}{\brk{\bw_\gamma,\frac{\e}{2\Wi}
\brk{\I-\pi_h[\GdorL'(\bpsi_\gamma)]}}}^h_D < 0 \,,
\end{align}
which obviously contradicts~\eqref{eq:one-handaL}.
Hence the mapping $\mathcal{F}^h$ has a zero in $\mathcal{B}_\gamma^h$
for $\gamma$ sufficiently large. 
\end{proof}

We now have the analogue of stability Theorem \ref{dstabthm}.

\begin{theorem} 
\label{dstabthmaorL}
For any $\delta \in (0, \frac{1}{2}]$, $L \geq 2$,
$N_T \geq 1$ and any
partitioning of $[0,T]$ into $N_T$
time steps, 
there exists a solution  
$\{(\buhdLa^{(L),n},\strhdLa^{(L),n})\}_{n=1}^{N_T}
\in [\Vhone \times \Sh^1]^{N_T}
$  
to (P$^{(L,)\dt}_{\alpha,\delta,h}$).

In addition, it follows for $n=1,\ldots,N_T$ that
\begin{align}
&\FdorLh(\buhdLa^{(L,)n},\strhdLa^{(L,)n}) 
+ 
\frac{\alpha\e\delta^2}{2{\rm Wi}}
\sum_{m=1}^n \dt_m
\intd \|\grad\pi_h[\GdorL'(\strhdLa^{(L,)m})]\|^2
\nonumber \\
& \hspace{0.5in} 
+ \frac{1}{2} \sum_{m=1}^n
\int_\D \left[ {\rm Re} \|\buhdLa^{(L,)m}-\buhdLa^{(L,)m-1}\|^2
+ (1-\e)\dt_{m}\|\gbuhdLa^{(L,)m}\|^2
\right]
\nonumber
\\
&\hspace{0.5in} 
+\frac{\e}{2{\rm Wi}^2}
\sum_{m=1}^n \dt_m
\intd\pi_h[\tr(\BdorL(\strhdLa^{(L,)m})+[\BdorL(\strhdLa^{(L,)m})]^{-1}-2\I)]
\nonumber \\
& \hspace{1in}
\leq
\FdorLh(\buh^0,\strh^0)
+ \frac{1+C_P}{2(1-\e)} \sum_{m=1}^n \dt_m \|\f^m\|_{H^{-1}(\D)}^2
\leq C
\,. 
\label{Fstab1aL}
\end{align}
 Moreover, it follows that 
\begin{align}
\nonumber 
&\max_{n=0, \ldots, N_T} \int_\D \left[ \|\buhdLa^{(L,)n}\|^2 
+ \pi_h[\,\|\strhdLa^{(L,)n}\|\,] + 
\delta^{-1}\,\pi_h[\,\|[\strhdLa^{(L,)n}]_{-}\|\,] \right]
\\
& \hspace{0.1in}+ \sum_{n=1}^{N_T} \intd
\left[ \dt_n \|\gbuhdLa^{(L,)n}\|^2 +
\dt_n \pi_h[\,\|[\BdorL(\strhdLa^{(L,)n})]^{-1}\|\,]
+ \|\buhdLa^{(L,)n}-\buhdLa^{(L,)n-1}\|^2 
\right] 
\nonumber \
\\
&\hspace{4in}
\leq C\,. 
\label{Fstab2aL}
\end{align}  
 \end{theorem}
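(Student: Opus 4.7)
The plan is to mirror the proof of Theorem~\ref{dstabthm}, but combining Propositions~\ref{prop:existence-PdLah} and~\ref{prop:free-energy-PdLah} in place of their $(\mathrm{P}_{\delta})$ counterparts. First, existence of $\{(\buhdLa^{(L,)n},\strhdLa^{(L,)n})\}_{n=1}^{N_T}\in[\Vhone\times\Shone]^{N_T}$ is obtained by induction on $n$: given $(\buhdLa^{(L,)n-1},\strhdLa^{(L,)n-1})\in\Vhone\times\Shone$, Proposition~\ref{prop:existence-PdLah} produces the next iterate, starting from the prescribed $(\buh^0,\strh^0)$ defined by (\ref{proju0},b).

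To prove (\ref{Fstab1aL}), I would sum the first inequality in (\ref{eq:estimate-PdLah}) over $m=1,\dots,n$. The $\FdorLh$ terms telescope and the remaining left-hand-side quantities accumulate exactly as displayed. The right-hand side $\sum_m \langle\f^m,\buhdLa^{(L,)m}\rangle_{H^1_0(\D)}$ is controlled using the second inequality in (\ref{eq:estimate-PdLah}), which absorbs half of $(1-\e)\dt_m\|\gbuhdLa^{(L,)m}\|^2$ back into the left-hand side and leaves the forcing term $\frac{1+C_P}{2(1-\e)}\sum_m\dt_m\|\f^m\|_{H^{-1}(\D)}^2$; this is bounded uniformly in $h$, $\Delta t$, $\delta$ and $L$ by (\ref{fncont}) and (\ref{freg}). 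The only non-routine piece is to show $\FdorLh(\buh^0,\strh^0)\le C$ independently of $\delta$ and $L$: the kinetic part is handled via (\ref{idatah}), and for the entropic part I would use (\ref{eqnorm}) together with the vertex-wise observation that, by Lemma~\ref{lem:idatahspd}, all eigenvalues of $\strh^0(P_p)$ lie in the $\delta,L$-independent window $[\sigma_{\rm min}^0,\sigma_{\rm max}^0]$, so that for $\delta<\sigma_{\rm min}^0$ and $L>\sigma_{\rm max}^0$ one has $\GdorL(\strh^0(P_p))=G(\strh^0(P_p))$ with $\tr(\strh^0(P_p)-G(\strh^0(P_p))-\I)$ bounded uniformly.

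Next, (\ref{Fstab2aL}) is extracted term-by-term from (\ref{Fstab1aL}). The $\Re/2$ factor in $\FdorLh$ yields the bound on $\int_\D\|\buhdLa^{(L,)n}\|^2$. For $\int_\D\pi_h[\|\strhdLa^{(L,)n}\|]$ and $\delta^{-1}\int_\D\pi_h[\|[\strhdLa^{(L,)n}]_{-}\|]$ I would apply the two alternatives in (\ref{Entropy2}) pointwise at each vertex of $\mathcal{T}_h$, noting that $\pi_h$ reduces integrals of such norm functions to vertex-based sums. The sums $\sum_m\int_\D\|\buhdLa^{(L,)m}-\buhdLa^{(L,)m-1}\|^2$ and $\sum_m\dt_m\int_\D\|\gbuhdLa^{(L,)m}\|^2$ already appear on the left of (\ref{Fstab1aL}). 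For the last term of (\ref{Fstab2aL}), I note that $\BdorL(\bphi)$ is symmetric positive definite with $\tr(\BdorL(\bphi))\ge 0$, so by (\ref{modphisq}) and (\ref{eq:positive-term}) one has, pointwise at each vertex,
\[
\|[\BdorL(\bphi)]^{-1}\| \le \tr([\BdorL(\bphi)]^{-1}) \le \tr(\BdorL(\bphi)+[\BdorL(\bphi)]^{-1}-2\I)+2d,
\]
which after interpolation matches the controlled quantity on the left of (\ref{Fstab1aL}).

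The main obstacle, namely the Brouwer fixed-point argument, has already been discharged inside Proposition~\ref{prop:existence-PdLah}, and the key algebraic cancellations exploiting (\ref{hLambdaj}) and Lemma~\ref{lem:inf-bound} are built into Proposition~\ref{prop:free-energy-PdLah}; what remains is bookkeeping. The only delicate check is the $\delta,L$-independent bound on $\FdorLh(\buh^0,\strh^0)$ sketched above, which crucially relies on the $L^\infty$ spectral control of $\strh^0$ furnished by Lemma~\ref{lem:idatahspd}.
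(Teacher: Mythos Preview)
Your proposal is correct and follows essentially the same approach as the paper: existence via Proposition~\ref{prop:existence-PdLah}, the energy bound (\ref{Fstab1aL}) by summing (\ref{eq:estimate-PdLah}) and controlling the initial energy via (\ref{idatah}) and Lemma~\ref{lem:idatahspd}, and then (\ref{Fstab2aL}) extracted from (\ref{Fstab1aL}) using (\ref{Entropy2}), (\ref{eq:positive-term}) and (\ref{modphisq}). One minor refinement: you do not need the restriction $\delta<\sigma_{\rm min}^0$, $L>\sigma_{\rm max}^0$, since by concavity of $G$ the tangent-line regularizations satisfy $\GdorL(s)\ge G(s)$ for all $s>0$, giving $\tr(\strh^0(P_p)-\GdorL(\strh^0(P_p))-\I)\le\tr(\strh^0(P_p)-G(\strh^0(P_p))-\I)\le C(\sigma_{\rm min}^0,\sigma_{\rm max}^0)$ uniformly in $\delta$ and $L$.
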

\begin{proof}
Existence and the stability result (\ref{Fstab1aL}) follow 
immediately from Propositions \ref{prop:existence-PdLah}
and \ref{prop:free-energy-PdLah}, respectively,
on noting 
(\ref{eq:free-energy-PdLah}),
(\ref{idatah}), (\ref{idatahspd}), (\ref{fncont}) and (\ref{freg}).
The bounds (\ref{Fstab2aL}) follow immediately from 
(\ref{Fstab1aL}), on noting   (\ref{eq:positive-term}),
(\ref{Entropy2}), (\ref{modphisq}) and the fact that $\BdorL(\bphi) \in
\RSPD$ for any $\bphi \in \RS$. 
 \end{proof}

\subsection{Convergence of (P$^{(L,)\Delta t}_{\alpha,\delta,h}$) to (P$^{(L,)\Delta t}_{\alpha,h}$)}

We now consider the
corresponding direct finite element approximation of (P$^{(L)}_\alpha$),
i.e. (P$^{(L,)\Delta t}_{\alpha,h}$) without the regularization $\delta$: 

We introduce 
\begin{align}
\ShonePD &=\{\bphi \in \Shone 
 \,:\, \bphi(P_p) \in \RSPD \quad \mbox{for}\quad
 p=1,\ldots, N_P\} \subset \SPD\,. 
 \label{Sh1PD}
\end{align}
It follows from (\ref{idatahspd}) that $\strh^{0} \in \ShonePD$.

({\bf P}$_{\alpha,h}^{(L,)\Delta t}$) 
Setting
$(\buhLa^{(L),0},\strhLa^{(L),0})=
(\buh^{0},\strh^{0})
\in\Vhone\times\ShonePD$, 
then for $n = 1, \ldots, N_T$ 
find $(\buhLa^{(L,)n},$ $\strhLa^{(L,)n})\in\Vhone\times\Shone$ 
such that for any test functions 
$(\bv,\bphi)\in\Vhone\times\Shone$
\begin{subequations}
\begin{align} 
\nonumber 
&\int_\D \biggl[ \Re\left(\frac{\buhLa^{(L,)n}-\buhLa^{(L,)n-1}}{\dt_{n}}\right)
\cdot \bv 
\nonumber \\
& \hspace{0.2in} 
+ \frac{\Re}{2}\Brk{ \left( (\buhLa^{(L,)n-1}\cdot\nabla)\buhLa^{(L,)n}\right) 
 \cdot \bv - 
 \buhLa^{(L,)n} \cdot \left( (\buhLa^{(L,)n-1}\cdot\nabla)\bv \right)}
\nonumber
\\
& \hspace{0.2in}
 + (1-\e) \gbuhLa^{(L,)n}:\grad\bv + \frac{\e}{\Wi} \pi_h[\BorL(\strhLa^{(L,)n})] : 
 \grad\bv 
\biggr] = 
\langle \f^n,\bv\rangle_{H^1_0(\D)}\,,
\label{eq:PLaha}
\\ 
\nonumber
&\int_\D \pi_h \left[\left(\frac{\strhLa^{(L,)n}-\strhLa^{(L,)n-1}}{\dt_{n}}\right) 
: \bphi + \frac{1}{\Wi}\left(\strhLa^{(L,)n}-\I\right): \bphi \right]
\\
& \hspace{1in}
+ \alpha \int_D \grad\strhLa^{(L,)n}::\grad\bphi
- 2\int_\D
\gbuhLa^{(L,)n}: \pi_h[\bphi\,\BorL(\strhLa^{(L,)n})]
\nonumber \\
& \hspace{1in}
- \int_\D \sum_{m=1}^d \sum_{p=1}^d 
[\buhLa^{(L,)n-1}]_m \,\Lambda^{(L)}_{m,p}(\strhLa^{(L,)n})
: \frac{\partial \bphi}{\partial \xx_p}
= 0 \,.
\label{eq:PLahb}
\end{align}
\end{subequations}

\begin{remark}\label{SPD}
Due to the presence of $\BorL$ in 
(\ref{eq:PLaha},b), it is implicitly assumed that
$\strhLa^{(L,)n} \in \ShonePD$, $n=1,\ldots,N_T$; recall
(\ref{eq:BGd}). In addition,  
$\Lambda^{(L)}_{m,p}(\bphi)$ for $\bphi \in \ShonePD$ is defined
similarly to (\ref{Lambdampdef}) with $\widehat \Lambda_{\delta,j}^{(L)}(\widehat \bphi)$
replaced by $\widehat \Lambda_{j}^{(L)}(\widehat \bphi)$,
which
is defined similarly to (\ref{hLambdajdef},b)
with $\lambda_{\delta,j}^{(L)}, \BdorL$ and $\GdorL$
replaced by  $\lambda_{j}^{(L)}, \BorL$ and $\GorL$,
with  $\lambda_{j}^{(L)}$ defined similarly to   
$\lambda_{\delta,j}^{(L)}$ with $\BdorL, \GdorL$ and $\HdorL$
replaced by  $\BorL, \GorL$ and $\HorL$. 
Hence,  
similarly to (\ref{LdmpLinf}), we have that
\begin{align}
\|\Lambda_{m,p}^{L}(\bphi)\|_{L^{\infty}(\D)} \leq C\,L \qquad
\forall 
\bphi \in \ShonePD.
\label{LmpLinf}
\end{align}
\end{remark}

We introduce also the unregularised free energy 
\begin{align} 
\ForLh(\bv,\bphi) := \frac{\Re}{2}\intd\|\bv\|^2 + 
\frac{\e}{2\Wi}\intd \pi_h[\tr(\bphi-\GorL(\bphi)-\I)] \,,
\label{unregL-energy}
\end{align}
which is well defined for all $(\bv,\bphi) \in 
\Vhone \times \ShonePD$.

\begin{theorem} 
\label{dLconthm}
For all regular partitionings $\mathcal{T}_h$ of $\D$
into simplices $\{K_k\}_{k=1}^{N_K}$
and all partitionings $\{\Delta t_n\}_{n=1}^{N_T}$
of $[0,T]$,
there exists a subsequence
$\{\{(\buhdLa^{(L,)n},\strhdLa^{(L,)n})\}_{n=1}^{N_T}\}_{\delta>0}$, where  
$\{(\buhdLa^{(L,)n},$ $\strhdLa^{(L,)n})\}_{n=1}^{N_T} 
\in [\Vhone \times \Shone]^{N_T}$ 
solves (P$^{(L,)\dt}_{\alpha,\delta,h}$), and  
$\{(\buhLa^{(L,)n},\strhLa^{(L,)n})\}_{n=1}^{N_T} \in [\Vhone \times \Shone]^{N_T}$ 
such that for the subsequence
\begin{align}
\buhdLa^{(L,)n} \rightarrow \buhLa^{(L,)n}, \quad 
\strhdLa^{(L,)n} \rightarrow \strhLa^{(L,)n} \quad \mbox{as } \delta \rightarrow 0_+\,,
\quad\mbox{for} \quad n=1, \ldots, N_T\,.
\label{Lconv}
\end{align}
In addition, for 
$n=1,\ldots, N_T$, $\strhLa^{(L,)n} \in \ShonePD,$ 
and $\{(\buhLa^{(L,)n},\strhLa^{(L,)n})
\}_{n=1}^{N_T}
\in [\Vhone \times \ShonePD]^{N_T}$
solves (P$_{\alpha,h}^{(L,)\dt}$).

Moreover, we have for $n=1,\ldots,N_T$ that 
\begin{align}
\nonumber
&\frac{\ForLh(\buhLa^{(L,)n},\strhLa^{(L,)n})-
\ForLh(\buhLa^{(L,)n-1},\strhLa^{(L,)n-1})}{\dt_{n}} 
+ \frac{ {\rm Re}}{2\dt_{n}}\intd\|\buhLa^{(L,)n}-\buhLa^{(L,)n-1}\|^2
\\
& \hspace{0.1in} 
+(1-\e)\intd\|\gbuhLa^{(L,)n}\|^2
+\frac{\e}{2{\rm Wi}^2}\intd \pi_h[\tr(\BL(\strhLa^{(L,)n})+[\BL(\strhLa^{(L,)n})]^{-1}-2\I)]
\nonumber \\
& \qquad 
\le 
\frac{1}{2}(1-\e)\intd\|\gbuhLa^{(L,)n}\|^2
+ \frac{1+C_P}{2(1-\e)} \|\f^{n}\|_{H^{-1}(\D)}^2
\,,
\label{eq:estimate-PLh}
\end{align}
and 
\begin{align}
\nonumber 
&\max_{n=0, \ldots, N_T} \int_\D 
\left[ \|\buhLa^{(L,)n}\|^2 + \pi_h[\,\|\strhLa^{(L,)n}\|\,] \right]
\\
& \hspace{0.1in}+ \sum_{n=1}^{N_T} \intd
\left[ \dt_n \|\gbuhLa^{(L,)n}\|^2 +
\dt_n \pi_h[\,\|[\BorL(\strhLa^{(L,)n})]^{-1}\|\,]
+ \|\buhLa^{(L,)n}-\buhLa^{(L,)n-1}\|^2 
\right] 
\nonumber \\
& \hspace{4in}
\leq C\,. 
\label{Fstab2newaL}
\end{align} 
\end{theorem}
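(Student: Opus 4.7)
The plan is to adapt the argument of Theorem~\ref{dconthm} to the present setting, the new ingredients being the stress-diffusion term, the nonlinear convective tensors $\Lambda^{(L)}_{\delta,m,p}$, and (when present) the cut-off $\BL$. First, since $\Vhone \times \Shone$ is finite-dimensional for fixed $h$, the bounds~(\ref{Fstab2aL}), which are uniform in $\delta \in (0,\frac{1}{2}]$, yield the subsequence convergence~(\ref{Lconv}) by standard compactness. Moreover, for each fixed time level $n$, every quantity appearing in~(\ref{eq:PdLaha},b) and~(\ref{eq:estimate-PdLah}) reduces, via the piecewise-polynomial and nodal-interpolation definitions, to a continuous function of the finitely many nodal values $\{\strhdLa^{(L,)n}(P_p)\}_{p=1}^{N_P}$ and $\{\buhdLa^{(L,)n}(P_p)\}_{p=1}^{N_P}$, so passing to the limit will reduce to pointwise vertex convergence.

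Second, I would establish $\strhLa^{(L,)n} \in \ShonePD$. The bound on $\delta^{-1}\pi_h[\|[\strhdLa^{(L,)n}]_-\|]$ in~(\ref{Fstab2aL}) forces $[\strhLa^{(L,)n}(P_p)]_- = \bzero$ in the limit, so $\strhLa^{(L,)n}$ is symmetric positive semidefinite at every node. For fixed $\dt_n>0$, the summability bound in~(\ref{Fstab2aL}) gives a uniform-in-$\delta$ bound on $\|[\BdorL(\strhdLa^{(L,)n}(P_p))]^{-1}\|$, so the eigenvalues of $\BdorL(\strhdLa^{(L,)n}(P_p))$ are bounded below by some $c_n>0$ independent of $\delta$; by the explicit formula~(\ref{eq:Bd}) for $\BdorL$ together with $L \ge 2 > c_n$ for $\delta$ small, the eigenvalues of $\strhdLa^{(L,)n}(P_p)$ itself are bounded below by $c_n$, whence $\strhLa^{(L,)n}(P_p) \in \RSPD$. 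Continuity of inversion then gives $[\BdorL(\strhdLa^{(L,)n}(P_p))]^{-1} \to [\BorL(\strhLa^{(L,)n}(P_p))]^{-1}$, while~(\ref{eq:Gd}),~(\ref{eq:Bd}) yield $\BdorL(\strhdLa^{(L,)n}(P_p)) \to \BorL(\strhLa^{(L,)n}(P_p))$ and the analogous convergences for $\GdorL'$ and $\HdorL$.

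The main obstacle is passing to the limit in the convective term $\sum_{m,p}[\buhdLa^{(L,)n-1}]_m \Lambda^{(L)}_{\delta,m,p}(\strhdLa^{(L,)n})$. On each reference simplex $\widehat K$ and for each vertex index $j$, I would treat separately the two cases of~(\ref{hLambdajdef},b): in the \emph{generic} case where the denominator in the formula for $\lambda_{\delta,j}^{(L)}$ is bounded away from zero uniformly in $\delta$, the quotient $\lambda_{\delta,j}^{(L)}$ and hence $\widehat\Lambda^{(L)}_{\delta,j}(\widehat{\strhdLa^{(L,)n}})$ converge by direct continuity; in the \emph{degenerate} case where the two relevant vertex values of $\BdorL(\strhdLa^{(L,)n})$ approach a common limit, (\ref{hLambdajdefb}) identifies $\widehat\Lambda^{(L)}_{\delta,j}$ with $\BdorL(\strhdLa^{(L,)n}(P_0^k))$, which converges to $\BorL(\strhLa^{(L,)n}(P_0^k)) = \BorL(\strhLa^{(L,)n}(P_j^k)) = \widehat\Lambda^{(L)}_{j}(\widehat{\strhLa^{(L,)n}})$. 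In both cases (\ref{Lambdampdef}) then yields $\Lambda^{(L)}_{\delta,m,p}(\strhdLa^{(L,)n}) \to \Lambda^{(L)}_{m,p}(\strhLa^{(L,)n})$. Positive definiteness of the limit, secured in the previous step, is essential here; without it, the quotient defining $\lambda_{\delta,j}^{(L)}$ could degenerate and the limit of $\widehat\Lambda^{(L)}_{\delta,j}$ could fail to exist.

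Finally, with pointwise vertex convergence secured for every nonlinear ingredient, I pass to the limit $\delta \to 0_+$ in each term of~(\ref{eq:PdLaha},b) to conclude that $(\buhLa^{(L,)n},\strhLa^{(L,)n})$ solves~(\ref{eq:PLaha},b). Taking the same limit in the one-step version~(\ref{eq:estimate-PdLah}) of~(\ref{Fstab1aL}), using $\FdorLh \to \ForLh$ via~(\ref{eq:Gd}) and the above vertex convergences, and dropping the nonnegative term $\frac{\alpha\e\delta^2}{2\Wi}\|\grad\pi_h[\GdorL'(\strhdLa^{(L,)n})]\|^2$ on the left (which need not converge as $\delta\to 0_+$ because $\GdorL'$ may become unbounded on a semidefinite limit, but this causes no harm since only the inequality is required), yields the free energy bound~(\ref{eq:estimate-PLh}). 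The stability estimate~(\ref{Fstab2newaL}) then follows by summing~(\ref{eq:estimate-PLh}) over $n$ and combining~(\ref{Entropy2}),~(\ref{modphisq}) and~(\ref{eq:positive-term}), exactly as in the proof of Theorem~\ref{dstabthmaorL}.
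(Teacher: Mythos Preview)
Your proposal is correct and follows essentially the same strategy as the paper's proof: compactness in the finite-dimensional spaces from the $\delta$-uniform bounds~(\ref{Fstab2aL}), non-negativity of the limiting stress from the $\delta^{-1}\pi_h[\|[\strhdLa^{(L,)n}]_-\|]$ bound, strict positive definiteness from the bound on $\pi_h[\|[\BdorL(\strhdLa^{(L,)n})]^{-1}\|]$, and then passage to the limit in each term of~(\ref{eq:PdLaha},b) and~(\ref{eq:estimate-PdLah}). Your treatment of the $\Lambda^{(L)}_{\delta,m,p}$ term is actually more detailed than the paper's, which simply cites~(\ref{Lambdampdef}) and~(\ref{hLambdajdef},b) together with $\strhLa^{(L,)n}\in\ShonePD$.

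Two small imprecisions are worth cleaning up. First, in your degenerate case you invoke~(\ref{hLambdajdefb}) to ``identify'' $\widehat\Lambda^{(L)}_{\delta,j}$ with $\BdorL(\strhdLa^{(L,)n}(P_0^k))$; but~(\ref{hLambdajdefb}) applies only when the denominator is exactly zero, not when it is merely small. The correct reasoning is that, by~(\ref{hLambdajdef},b), $\widehat\Lambda^{(L)}_{\delta,j}$ is \emph{always} a convex combination (coefficient in $[0,1]$) of $\BdorL(\strhdLa^{(L,)n}(P_j^k))$ and $\BdorL(\strhdLa^{(L,)n}(P_0^k))$; since both endpoints converge to the common value $\BorL(\strhLa^{(L,)n}(P_0^k))$, so does any convex combination, regardless of whether $\lambda^{(L)}_{\delta,j}$ itself converges. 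Second, the claim ``$L\ge 2>c_n$'' is unjustified and unnecessary: one automatically has $c_n\le L$ because every eigenvalue of $\BdL(\cdot)$ lies in $[\delta,L]$, and this is all you need to conclude that, for $\delta<c_n$, the eigenvalues of $\strhdLa^{(L,)n}(P_p)$ itself are at least $c_n$.
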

\begin{proof}
For any integer $n \in[1,N_T]$,
the desired subsequence convergence result (\ref{Lconv}) follows immediately 
from (\ref{Fstab2aL}),
as $(\buhdLa^{(L,)n},$ $\strhdLa^{(L,)n})$ 
are finite dimensional for fixed $\Vhone \times S_h^1$.
It also follows from (\ref{Fstab2aL}),
(\ref{Lconv}) and (\ref{Lip}) that $\pi_h[\,[\strhdLa^{(L,)n}]_{-}]$ vanishes on $\D$, 
so that
$\strhLa^{(L,)n}$ must be non-negative definite on $\D$. Hence 
on noting this,  (\ref{eq:Bd}), (\ref{Lip}) and (\ref{Lconv}), 
we have the following subsequence convergence results
\begin{align}
\BdorL(\strhLa^{(L,)n}) \rightarrow \BorL(\strhLa^{(L,)n}) 
\quad \mbox{and}
\quad  
\BdorL(\strhdLa^{(L,)n}) \rightarrow  \BorL(\strhLa^{(L,)n}) \quad \mbox{as} \quad
\delta \rightarrow 0_+\,.
\label{betaLcon}
\end{align}

It also follows from (\ref{Fstab2aL}), (\ref{betaLcon}) and as $[\BdorL(\strhdLa^{(L,)n})]^{-1}
\BdorL(\strhdLa^{(L,)n})=\I$ that the following subsequence result 
\begin{align}
\pi_h[\,[\BdorL(\strhdLa^{(L,)n})]^{-1}] \rightarrow 
\pi_h[\,[\BorL(\strhLa^{(L,)n})]^{-1}]\qquad \mbox{as} \quad
\delta \rightarrow 0_+
\label{posLdef}
\end{align}
holds, and so $\strhLa^{(L,)n} \in \ShonePD$.
Therefore, we have 
from (\ref{Lconv}) and (\ref{eq:Gd}) that
\begin{align}
\GdorL(\strhdLa^{(L,)n}) \rightarrow \GorL(\strhLa^{(L,)n}) \qquad \mbox{as} \quad
\delta \rightarrow 0_+\,.
\label{GLcon}
\end{align}

Similarly to (\ref{GLcon}), it follows from (\ref{Lconv}), (\ref{betaLcon}),
(\ref{Lambdampdef}) and (\ref{hLambdajdef},b)
as $\strhLa^{(L,),n}
\in \ShonePD$ that for $m,\,p = 1, \ldots, d$
\begin{align}
\Lambda_{\delta,m,p}^{(L)}(\strhdLa^{(L,)n}) 
\rightarrow 
\Lambda_{m,p}^{(L)}(\strhLa^{(L,)n}) 
\quad \mbox{as} \quad
\delta \rightarrow 0_+\,.
\label{Lamcon}
\end{align} 
Hence using (\ref{Lconv}), (\ref{betaLcon}) and (\ref{Lamcon}),
we can pass to the limit $\delta \rightarrow 0_+$ in (P$_{\alpha,\delta,h}^{(L,)\dt}$), 
(\ref{eq:PdLaha},b),
to show that $\{(\buhLa^{(L,)n},\strhLa^{(L,)n})\}_{n=1}^{N_T}
\in [\Vhone \times \ShonePD]^{N_T}$
solves (P$_{\alpha,h}^{(L,)\dt}$), (\ref{eq:PLaha},b). 
Similarly, using (\ref{Lconv}), (\ref{betaLcon}), (\ref{posLdef}) and (\ref{GLcon}), 
and noting (\ref{eq:free-energy-PdLah}) and (\ref{unregL-energy}),
we can pass to the limit $\delta \rightarrow 0_+$ in (\ref{eq:estimate-PdLah})
and (\ref{Fstab2aL})
to obtain the desired results (\ref{eq:estimate-PLh}) and (\ref{Fstab2newaL}).
\end{proof}

For later purposes, we introduce the following notation 
in line with (\ref{eq:constant-source}). 
Let $\buhLa^{(L,)\dt} \in C([0,T];\Vhone)$ and $\buhLa^{(L,)\dt,\pm}  
\in L^\infty(0,T;\Vhone)$
be such that
for $n=1,\dots,N_T$
\begin{subequations}
\begin{alignat}{2}
\buhLa^{(L,)\dt}(t,\cdot) &:= \frac{t-t^{n-1}}{\dt_n} \buhLa^{(L,)n}(\cdot) 
+ \frac{t^n-t}{\dt_n} \buhLa^{(L,)n-1}(\cdot)
&&\quad t \in [t^{n-1},t^n], 
\label{timeconaL}\\
\buhLa^{(L,)\dt,+}(t,\cdot)&:= \buhLa^{(L,)n}(\cdot),
\quad \buhLa^{(L,)\dt,-}(t,\cdot):= \buhLa^{(L,)n-1}(\cdot)
&&\quad t \in [t^{n-1},t^n), 
\label{time+-aL}\\
\mbox{and} \quad \Delta(t)&:= \dt_n 
&&\quad t \in [t^{n-1},t^n), 
\label{deltat}
\end{alignat}
\end{subequations}
We note that 
\begin{align}
\buhLa^{(L,)\dt}-\buhLa^{(L,)\dt,\pm}&=(t-t^n_{\pm})\frac{\partial \buhLa^{(L,)\dt}}
{\partial t}
\quad t \in (t^{n-1},t^n), \quad n=1,\dots,N_T\,, 
\label{eqtime+}
\end{align}
where $t^n_{+}:=t^n$ and $t^n_{-}:=t^{n-1}$.
We define 
$\strhLa^{(L,)\dt} \in C([0,T];\ShonePD)$
and $\strhLa^{(L,)\dt,\pm} \in L^\infty(0,T;$ $\ShonePD)$ similarly to
(\ref{timeconaL},b).

Using the notation (\ref{timeconaL},b), 
(\ref{eq:PLaha})
multiplied by $\Delta t_n$ and summed for $n=1,\dots, N_T$
can be restated as:
\begin{align}
\nonumber 
& \displaystyle\int_{0}^{T} \int_\D 
\left[ \Re \frac{\partial \buhLa^{(L,)\dt}}{\partial t}
 \cdot \bv 
+ (1-\e) 
\grad \buhLa^{(L,)\dt,+} :
\grad \bv \right] dt
\\
&\quad+ \frac{\Re}{2} \int_{0}^T \int_{\D}
\left[ \left[ (\buhLa^{(L,)\dt,-} \cdot \grad) \buhLa^{(L,)\dt,+} \right] \cdot \bv
- \left[(\buhLa^{(L,)\dt,-} \cdot \grad) \bv \right] \cdot \buhLa^{(L,)\dt,+}
\right] dt
\nonumber \\
& \qquad =
\int_0^T \left[ \langle \f^+,\bv \rangle_{H^1_0(\D)}
- \frac{\e}{\Wi} \int_{\D}
\pi_h[\BorL(\strhLa^{(L,)\dt,+})]: \grad
\bv \right] dt
\nonumber \\
& \hspace{3in}
\qquad \forall \bv \in L^2(0,T;\Vhone).
\label{equncon}
\end{align}
Similarly, (\ref{eq:PLahb})
multiplied by $\Delta t_n$ and
summed for $n=1,\dots,N_T$ can be restated as:
\begin{align}
\nonumber
&\int_{0}^T \intd \pi_h\left[\frac{\partial \strhLa^{(L,)\dt}}{\partial t}
: \bchi + \frac{1}{\Wi} ( \strhLa^{(L,)\dt,+}-\I): \bchi
\right] dt
\nonumber \\
& \ 
+ \alpha \int_{0}^T \!\intd \grad \strhLa^{(L,)\dt,+} :: \grad \bchi \,dt
-2 \int_{0}^T \!\intd \grad \buhLa^{(L,)\dt,+}
: \pi_h[\bchi\,\BorL(\strhLa^{(L,)\dt,+})] \,dt
\nonumber \\
& \ -\int_{0}^T \intd
\sum_{m=1}^d \sum_{p=1}^d [\buhLa^{(L,)\dt,-}]_m \,\Lambda^{(L)}_{m,p}
(\strhLa^{(L,)\dt,+}) : \frac{\partial \bchi}{\partial \xx_p}\,
dt
=0 
\nonumber \\
& \hspace{3in}
\qquad \forall \bchi \in L^2(0,T;\Shone).
\label{eqpsincon}
\end{align}

We note also the following Lemma for later purposes.
\begin{lemma} 
\label{lemMXitt}
For all $K_k \in {\mathcal T}_h$,
and for all $\bphi \in \ShonePD$ we have that
\begin{align}
&\int_{K_k}
\|\pi_h[\BorL(\bphi)]-\BorL(\bphi)\|^2
+ \max_{m,p=1,\dots,d}
\int_{K_k}
\|\Lambda^{(L)}_{m,p}(\bphi)- \BorL(\bphi)\,\delta_{mp}\|^2 
\nonumber \\
& \hspace{3in}
\leq C  
\,h^2\int_{K_k} \|\grad \bphi\|^2
\,.
\label{MXittxbd}
\end{align}
\end{lemma}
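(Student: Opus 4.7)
My plan is to bound both terms on the left-hand side pointwise on each simplex $K_k$, using (i) the $1$-Lipschitz property of $\BorL$ via (\ref{Lip}) (recall $\BorL(s)=\min\{\beta(s),L\}$ with $\beta(s)=s$), (ii) the fact that $\bphi\in\Shone$ is affine on $K_k$, so $\grad\bphi$ is constant there and $\|\bphi(\yy)-\bphi(\xx)\|\le h_k\|\grad\bphi\|$ for all $\xx,\yy\in\overline{K_k}$, and (iii) the representation formulas (\ref{Lambdampdef}), (\ref{hLambdajdef},b) together with the regularity bound (\ref{Breg}).

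For the first summand I write, at any $\xx\in K_k$,
\begin{align*}
\pi_h[\BorL(\bphi)](\xx)-\BorL(\bphi(\xx))=\sum_{i=0}^d\eta^k_i(\xx)\Brk{\BorL(\bphi(P^k_i))-\BorL(\bphi(\xx))}\,,
\end{align*}
and apply $\|\BorL(\bA)-\BorL(\bB)\|\le\|\bA-\bB\|$ and the affine bound above, together with $\sum_i\eta^k_i\equiv 1$, to conclude that $\|\pi_h[\BorL(\bphi)]-\BorL(\bphi)\|\le h_k\|\grad\bphi\|$ on $K_k$. Squaring and integrating, and using that $\|\grad\bphi\|$ is constant on $K_k$, gives the first bound with $h_k\le h$.

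For the second summand, observe that by (\ref{hLambdajdef},b) (in the limit $\delta\to 0_+$ treated in Remark~\ref{SPD}) each $\widehat\Lambda_j^{(L)}(\widehat\bphi)$ is a convex combination of $\BorL(\bphi(P^k_j))$ and $\BorL(\bphi(P^k_0))$; in particular for any $\xx\in K_k$,
\begin{align*}
\|\widehat\Lambda_j^{(L)}(\widehat\bphi)-\BorL(\bphi(\xx))\|
\le\max_{i\in\{0,j\}}\|\BorL(\bphi(P^k_i))-\BorL(\bphi(\xx))\|
\le h_k\|\grad\bphi\|\,.
\end{align*}
Since $\sum_{j=1}^d[(B_k^T)^{-1}]_{mj}[B_k^T]_{jp}=\delta_{mp}$, the definition (\ref{Lambdampdef}) yields
\begin{align*}
\Lambda^{(L)}_{m,p}(\bphi)(\xx)-\BorL(\bphi(\xx))\,\delta_{mp}
=\sum_{j=1}^d[(B_k^T)^{-1}]_{mj}\Brk{\widehat\Lambda_j^{(L)}(\widehat\bphi)-\BorL(\bphi(\xx))}[B_k^T]_{jp}\,,
\end{align*}
and combining the previous estimate with (\ref{Breg}) produces $\|\Lambda^{(L)}_{m,p}(\bphi)-\BorL(\bphi)\,\delta_{mp}\|\le Ch_k\|\grad\bphi\|$ on $K_k$. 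Integrating squares as before closes the argument.

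The only nontrivial step is verifying that the convex-combination structure of $\widehat\Lambda_j^{(L)}$ survives passage to the limit $\delta\to 0$ on $\ShonePD$; this is exactly what Remark~\ref{SPD} asserts, so no new work is needed. Everything else reduces to standard interpolation on an affine element combined with the Lipschitz property of $\BorL$, and no delicate cancellation is required.
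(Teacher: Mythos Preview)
Your proof is correct and follows essentially the same approach as the paper: both arguments rest on the $1$-Lipschitz property of $\BorL$ via (\ref{Lip}), the convex-combination structure (\ref{hLambdajdef},b) of $\widehat\Lambda_j^{(L)}$, the matrix bound (\ref{Breg}), and the fact that $\bphi$ is affine on $K_k$ so $\grad\bphi$ is constant there. The only cosmetic difference is that the paper routes the second estimate through the intermediate $\pi_h[\BorL(\bphi)]\,\delta_{mp}$ and then invokes the first bound plus a triangle inequality, whereas you compare $\widehat\Lambda_j^{(L)}(\widehat\bphi)$ directly to $\BorL(\bphi(\xx))$; your route is marginally more direct but the substance is identical.
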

\begin{proof}
First,
we have from  
(\ref{Lip}) that
for all $\bphi \in \ShonePD$ 
\begin{align}
\nonumber
\int_{K_k} \|\pi_h[\BorL(\bphi)]- \BorL(\bphi)\|^2
&\leq C\,|K_k|\,\sum_{j=0}^d \|\BorL(\bphi(P^k_j))-\BorL(\bphi)\|_{L^{\infty}(K_k)}^2
\\
&\leq C\,|K_k|\,\sum_{j=0}^d \|\bphi(P^k_j)-\bphi\|_{L^{\infty}(K_k)}^2
\nonumber \\
&\leq C \,h^2 |K_k|\,
\|\grad \bphi\|_{L^{\infty}(K_k)}^2
\leq C \,h^2 \int_{K_k}
\|\grad \bphi\|^2\,.
\label{MXittpd3}
\end{align}
where $\{P_j^k\}_{j=0}^d$ are the vertices of $K_k$. 
Hence we have the desired first bound in  (\ref{MXittxbd}).

It follows from 
the $\delta$ independent versions
of (\ref{Lambdampdef}) and (\ref{hLambdajdef},b), recall Remark \ref{SPD}, 
(\ref{Breg})
and (\ref{Lip})
that for all $\bphi \in \ShonePD$
\begin{align}
&\int_{K_k}
\|\Lambda^{(L)}_{m,p}(\bphi)- \pi_h[\BorL(\bphi)]\,\delta_{mp}\|^2  
\nonumber \\
& \hspace{1in}
= \int_{K_k} 
\|\sum_{j=1}^d \left[ [(B_k^T)^{-1}]_{mj} 
\,[ \widehat{\Lambda}^{(L)}_j(\widehat \bphi) 
- \pi_h[
\beta^{(L)}(\bphi)]\,]\,[B_k^T]_{jp} \right]\|
\nonumber \\ 
&\hspace{1in}\leq C \int_{K_k}
\sum_{j=1}^d 
\|\widehat \Lambda^{(L)}_{j}(\widehat \bphi)- \pi_h[\BorL(\bphi)]
\|^2  
\nonumber \\
& \hspace{1in}\leq C |K_k|
\max_{i,j=0,\dots,d}
\|\BorL(\bphi(P_j^k))-\BorL(\bphi(P_i^k))\|^2
\nonumber \\
& \hspace{1in}\leq C |K_k|
\max_{i,j=0,\dots,d}
\|\bphi(P_j^k)-\bphi(P_i^k)\|^2
\nonumber \\
&\hspace{1in}\leq C\, h^2 \int_{K_k}
\|\grad \bphi\|^2\,.
\label{MXittxbdp1} 
\end{align}
Combining (\ref{MXittxbdp1}) and the first bound in (\ref{MXittxbd}) 
yields the second bound in (\ref{MXittxbd}).
\end{proof}

\section{Convergence of (P$^{L,\Delta t}_{\alpha,h}$) to
(P$^{L}_{\alpha}$)} 
\label{sec:convergence}

Before proving our convergence result, we first deduce some simple inequalities
that will be required.
We recall the following well-known results concerning the interpolant $\pi_h$:
\begin{subequations}
\begin{align}
\|(\I-\pi_h)\bphi\|_{W^{1,\infty}(K_k)} &\leq C\,h\,|\bphi|_{W^{2,\infty}(K_k)} 
\qquad \forall \bphi \in [W^{2,\infty}(K_k)]_{S}^{d \times d}, 
\nonumber \\
& \hspace{2in}
\quad k=1,\ldots, N_K;
\label{interp1}\\
\|(\I-\pi_h)[\bchi : \bphi]\|_{L^2(\D)}
& \leq C\,h^2\,\|\nabla \bchi\|_{L^2(\D)}\,\|\nabla \bphi\|_{L^\infty(\D)}
\nonumber
\\
&\leq C\,h\,\|\bchi\|_{L^2(\D)}\,\|\nabla \bphi\|_{L^\infty(\D)}
\qquad \forall \bchi,\,\bphi \in \Shone. 
\label{interp2}
\end{align}
\end{subequations}
We note for any $\zeta \in \R_{>0}$ that 
\begin{align}
&\left[\pi_h[\bchi:\bphi]\right](\xx) \leq \frac{1}{2}
\left[\pi_h\left[\zeta \,\|\bchi\|^2 + \zeta^{-1}\|\bphi\|^2 \right]\right](\xx)
\nonumber
\\
&\hspace{1.5in}\forall \xx \in K_k, \quad \forall 
\bchi, \bphi \in [C(\overline{K_k})]^{d \times d}, \quad k=1,\dots,N_K\,.
 \label{vecint}
\end{align}
Combining (\ref{interpinf}), (\ref{normprod}) and (\ref{BdLphi}), 
we have for all $\bphi \in \ShonePD$ and for all $\bpsi \in \Shone$ that
\begin{align}
\int_D \|\pi_h[\bpsi\,\BL(\bphi)]\|^2
& \leq \int_D \pi_h[\,\|\bpsi\,\BL(\bphi)\|^2\,]
\leq \int_D \pi_h[\,\|\bpsi\|^2\,\|\BL(\bphi)\|^2\,]
\nonumber \\
\label{combi} 
&\leq d\,L^2 \int_D \pi_h[\,\|\bpsi\|^2]\,.
\end{align}

We require also the $L^2$ projector ${\mathcal R}_h : \Uz \rightarrow \Vhone$
defined by
\begin{align}
\intd (\bv -{\mathcal R}_h\bv)\bw =0 \qquad \forall \bw \in \Vhone\,. 
\label{Rh}
\end{align}
In addition, we require  
${\mathcal P}_h : \S \rightarrow \Shone$
defined by
\begin{align}
\intd \pi_h[{\mathcal P}_h\bchi :\bphi] =
\intd \bchi : \bphi  \qquad \forall \bphi \in \Shone\,. 
\label{Ph}
\end{align}
It is easily deduced for $p =1,\ldots,N_P$ and $i,\,j
= 1, \ldots, d$ that
\begin{align}
[{\cal P}_h \bchi]_{ij}(P_p) = \frac{1}{\intd \eta_p} \intd [{\cal P}_h \bchi]_{ij}\,\eta_p\,,
\label{strh01def}
\end{align}
where $\eta_p \in {\rm Q}_h^1$ is such that $\eta_p(P_r)=\delta_{pr}$
for $p,\,r = 1,\ldots,N_P$.
It follows from
(\ref{Ph}) and
(\ref{interpinf}) with $\bphi= {\mathcal P}_h \bchi$, in both cases, that 
\begin{align}
\intd \|{\mathcal P}_h \bchi\|^2 \leq \intd \pi_h[\,\|{\mathcal P}_h \bchi\|^2]
\leq \intd \|\bchi\|^2 \qquad \forall \bchi \in [L^2(\D)]^{d \times d}_S\,.
\label{PhstabL2}
\end{align}

We shall assume from now on that $\D$ is convex and that 
the family $\{{\mathcal T}_h\}_{h>0}$ is quasi-uniform,
i.e.\ $h_k \geq C\,h$, $k=1,\ldots, N_K$. 
It then follows that
\begin{align}
\|{\mathcal R}_h \bv\|_{H^1(\D)} \leq C \|\bv\|_{H^1(\D)}
\qquad \forall \bv \in \Uz\,,  
\label{Rhstab}
\end{align}
see Lemma 4.3 in Heywood and Rannacher.\cite{HR} Similarly, it is easily established that
\begin{align}
\|{\mathcal P}_h \bchi\|_{H^1(\D)} \leq C \|\bchi\|_{H^1(\D)}
\qquad \forall \bchi \in [H^1(\D)]^{d \times d}_S \,.  
\label{Phstab}
\end{align}

Let $([H^1(\D)]^{d \times d}_S)'$ be the topological dual of
  $[H^1(\D)]^{d \times d}_S$ with $[L^2(\D)]^{d \times d}_S$
being the pivot space.
Let ${\mathcal E} : ([H^1(\D)]^{d \times d}_S)' \rightarrow [H^1(\D)]^{d \times d}_S$
be such that ${\mathcal E} \bchi$ is the unique solution of the Helmholtz
problem
\begin{align}
\intd \left [ \grad ({\mathcal E} \bchi) ::
\grad \bphi + ({\mathcal E} \bchi) : \bphi \right]=
\langle \bchi,\bphi\rangle_{H^1(\D)} \qquad \forall \bphi \in [H^1(\D)]^{d \times d}_S\,,
\label{Echi}
\end{align}  
where $\langle \cdot,\cdot \rangle_{H^1(\D)}$ denotes the duality pairing 
between $([H^1(\D)]^{d \times d}_S)'$ and $[H^1(\D)]^{d \times d}_S$. We note that
\begin{align}
\langle \bchi,{\mathcal E}\bchi\rangle_{H^1(\D)} =\|{\mathcal E} \bchi\|_{H^1(\D)}^2
\qquad \forall \bchi \in ([H^1(\D)]^{d \times d}_S)'\,,
\label{Echinorm}
\end{align} 
and $\|{\mathcal E}\cdot\|_{H^1(\D)}$ is a norm on $([H^1(\D)]^{d \times d}_S)'$.

Let $\Uz'$ be the topological dual of $\Uz$
with the space of weakly divergent free functions in $[L^2(\D)]^d$
being the pivot space.
Let ${\mathcal S} : \Uz' \rightarrow
\Uz$ be such that ${\mathcal S} \bw$ is the unique solution to the Helmholtz-Stokes
problem
\begin{align}
\intd \left [ \grad ({\mathcal S} \bw) :
\grad \bv + ({\mathcal S} \bw) \cdot \bv \right]=
\langle \bw,\bv\rangle_{\Uz} \qquad \forall \bv \in \Uz\,,
\label{Sw}
\end{align}  
where $\langle \cdot,\cdot \rangle_{\Uz}$ denotes the duality pairing 
between $\Uz'$ and $\Uz$. We note that
\begin{align}
\langle \bw,{\mathcal S}\bw\rangle_{\Uz} =\|{\mathcal S} \bw\|_{H^1(\D)}^2
\qquad \forall \bw \in \Uz' \,,
\label{Swnorm}
\end{align} 
and $\|{\mathcal S}\cdot\|_{H^1(\D)}$ is a norm on the reflexive space $\Uz'$.
As $\Uz$ is continuously embedded in $[H^1_0(\D)]^d$, it follows that
$[H^{-1}(\D)]^d$ is continuously embedded in $\Uz'$.

We recall the following well-known Gagliardo-Nirenberg inequality. 
Let $r \in [2,\infty)$ if $d=2$, and $r \in [2,6]$ if $d=3$ and $\theta =
d(\frac{1}{2}-\frac{1}{r})$. Then, there exists a positive constant $C(\D,r,d)$
such that
\begin{align}
\|\eta\|_{L^r(\D)} \leq C(\D,r,d) \|\eta\|_{L^2(\D)}^{1-\theta} 
\|\eta\|_{H^1(\D)}^\theta \qquad \forall \eta \in H^1(\D)\,.
\label{GN}
\end{align}

We recall also the following compactness result, 
see e.g.\ Theorem 2.1 on p184 in Temam\cite{Temam} and Simon.\cite{Simon}
Let ${\mathcal Y}_0$, ${\mathcal Y}$ and
${\mathcal Y}_1$ be Banach
spaces, ${\mathcal Y}_i$, $i=0,1$, reflexive, with a compact embedding 
${\mathcal Y}_0
\hookrightarrow {\mathcal Y}$ and a continuous embedding ${\mathcal Y} \hookrightarrow
{\mathcal Y}_1$. 
Then, for $\mu_i>1$, $i=0,1$, the following embedding is compact~:
\begin{align}
&\{\,\eta \in L^{\mu_0}(0,T;{\mathcal Y}_0): \frac{\partial \eta}{\partial t}
\in L^{\mu_1}(0,T;{\mathcal Y}_1)\,\} \hookrightarrow L^{\mu_0}(0,T;{\mathcal Y}) \ .
\label{compact1}
\end{align}

\begin{theorem}
\label{dstabthmaL}
Under the assumptions of Theorem \ref{dLconthm},
there exists a solution 
$\{(\buhLa^{L,n},\strhLa^{L,n})\}_{n=1}^{N_T} \in [\Vhone \times \ShonePD]^{N_T}$ 
of (P$^{L,\dt}_{\alpha,h}$)
such that, in addition to the bounds (\ref{eq:estimate-PLh}) and (\ref{Fstab2newaL}),
the following bounds hold:
\begin{subequations}
\begin{align}
&\max_{n=0, \ldots, N_T} \intd
\pi_h[\,\|\strhLa^{L,n}\|^2\,]
+  \sum_{n=1}^{N_T} \intd \left[\dt_n \alpha \|\grad \strhLa^{L,n}\|^2 
+ \pi_h[\,\|\strhLa^{L,n}-\strhLa^{L,n-1}\|^2\,]
\right]
\nonumber \\
& \hspace{3.5in}
\leq C(L)\,,
\label{Fstab3aL}
\\
&\sum_{n=1}^{N_T} \dt_n \left\|{\mathcal S}
\left( \frac{\buhLa^{L,n}-\buhLa^{L,n-1}}{\dt_n}\right)
\right\|^{\frac{4}{\vartheta}}_{H^1(\D)}
+ 
\sum_{n=1}^{N_T} \dt_n \left\|{\mathcal E}
\left( \frac{\strhLa^{L,n}-\strhLa^{L,n-1}}{\dt_n}\right)
\right\|^{2}_{H^1(\D)}
\nonumber \\
&\hspace{3.5in}\leq C(L,T)\,;
\label{Fstab4aL}
\end{align}
\end{subequations}
where
\begin{align}
\vartheta \in (2,4) \quad \mbox{if } d=2
\qquad \mbox{and}
\qquad 
\vartheta = 3 \quad \mbox{if } d=3.
\label{varth}
\end{align}
\end{theorem}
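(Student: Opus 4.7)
Existence of $\{(\buhLa^{L,n},\strhLa^{L,n})\}_{n=1}^{N_T}$, together with the bounds (\ref{eq:estimate-PLh}) and (\ref{Fstab2newaL}), is already furnished by Theorem \ref{dLconthm}, so the task reduces to establishing the additional bounds (\ref{Fstab3aL}) and (\ref{Fstab4aL}) for this same sequence.

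The plan for (\ref{Fstab3aL}) is to test the discrete stress equation (\ref{eq:PLahb}) with $\bphi=\strhLa^{L,n}$. Applying (\ref{elemident}) pointwise at the Lagrange vertices and integrating converts the time-derivative term into a telescoping sum of $\intd\pi_h[\,\|\strhLa^{L,n}\|^2\,]$ together with the positive increment $\intd\pi_h[\,\|\strhLa^{L,n}-\strhLa^{L,n-1}\|^2\,]$, while the diffusion term supplies exactly $\alpha\|\grad\strhLa^{L,n}\|^2_{L^2(\D)}$. The Oldroyd mass term, after using $|\tr(\bphi)|\leq \sqrt{d}\,\|\bphi\|$ and Young, contributes $\frac{1}{2\Wi}\intd\pi_h[\,\|\strhLa^{L,n}\|^2\,]-C$. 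The stress--velocity coupling is controlled via Cauchy--Schwarz with (\ref{combi}), the cut-off $\BL$ yielding a factor $CL\|\grad\buhLa^{L,n}\|_{L^2(\D)}(\intd\pi_h[\,\|\strhLa^{L,n}\|^2\,])^{1/2}$, and the convective term is treated analogously using (\ref{LmpLinf}) as $CL\|\buhLa^{L,n-1}\|_{L^2(\D)}\|\grad\strhLa^{L,n}\|_{L^2(\D)}$. Splitting by Young so as to absorb half of the diffusion and a small multiple of $\|\grad\buhLa^{L,n}\|^2_{L^2(\D)}$, multiplying by $\dt_n$, summing and invoking the discrete Gronwall lemma using (\ref{Fstab2newaL}) delivers (\ref{Fstab3aL}) with a constant $C(L)$ growing like $\exp(CL^2T)$.

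For (\ref{Fstab4aL}) the strategy is to test each discrete equation with the image of the corresponding time increment under the Helmholtz operator that realizes the dual norm. For the stress, set $\bxi_h^n:=(\strhLa^{L,n}-\strhLa^{L,n-1})/\dt_n\in\Shone$ and test (\ref{eq:PLahb}) with $\bchi={\mathcal P}_h{\mathcal E}\bxi_h^n\in\Shone$. By (\ref{Ph}) and (\ref{Echinorm}) the time-derivative term equals exactly $\|{\mathcal E}\bxi_h^n\|^2_{H^1(\D)}$, while the mass, diffusion, coupling and convection terms are each bounded by $\|{\mathcal E}\bxi_h^n\|_{H^1(\D)}$ times one of the quantities $\|\strhLa^{L,n}\|_{L^2(\D)}+C$, $\alpha\|\grad\strhLa^{L,n}\|_{L^2(\D)}$, $L\|\grad\buhLa^{L,n}\|_{L^2(\D)}$, $L\|\buhLa^{L,n-1}\|_{L^2(\D)}$, thanks to (\ref{combi}), (\ref{PhstabL2}), (\ref{Phstab}) and (\ref{LmpLinf}). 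Dividing, squaring, multiplying by $\dt_n$ and summing then yields the stress part of (\ref{Fstab4aL}) via (\ref{Fstab3aL}) and (\ref{Fstab2newaL}).

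For the velocity, set $\bw_h^n:=(\buhLa^{L,n}-\buhLa^{L,n-1})/\dt_n\in\Vhone$. Since $\bw_h^n\in\Vhone$, (\ref{Rh}) and (\ref{Swnorm}) give $\|{\mathcal S}\bw_h^n\|^2_{H^1(\D)}=\intd\bw_h^n\cdot{\mathcal R}_h{\mathcal S}\bw_h^n$; substituting $\bv={\mathcal R}_h{\mathcal S}\bw_h^n$ in (\ref{eq:PLaha}) and invoking (\ref{Rhstab}) expresses this quantity in terms of diffusion, stress coupling, body force and convection. The linear terms are each dominated by $\|{\mathcal S}\bw_h^n\|_{H^1(\D)}$ times $\|\grad\buhLa^{L,n}\|_{L^2(\D)}$, $L$, or $\|\f^n\|_{H^{-1}(\D)}$, while the trilinear convective term must be handled via Gagliardo--Nirenberg (\ref{GN}): in $d=2$ with $r=4$ (giving $\theta=1/2$) and in $d=3$ by the splitting $\|\buhLa^{L,n-1}\|_{L^3(\D)}\|\buhLa^{L,n}\|_{L^6(\D)}$ with respective exponents $\theta=1/2,\,1$. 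After dividing through, raising to the power $4/\vartheta$, multiplying by $\dt_n$ and summing, H\"older's inequality in the form $\sum_n\dt_n\|\grad\buhLa^{L,n}\|^{4/\vartheta}_{L^2(\D)}\leq(\sum_n\dt_n)^{1-2/\vartheta}(\sum_n\dt_n\|\grad\buhLa^{L,n}\|^2_{L^2(\D)})^{2/\vartheta}$---valid precisely because $\vartheta/2>1$ under (\ref{varth})---reduces everything to bounded quantities. The \textbf{main obstacle} is precisely this convective contribution, which both forces the sub-quadratic exponent $4/\vartheta\in(1,2)$ and accounts for the dimensional split in (\ref{varth}) (the sharp embedding $H^1\hookrightarrow L^6$ in $d=3$ prevents taking $\vartheta<3$ there, whereas $d=2$ allows the whole open range $(2,4)$).
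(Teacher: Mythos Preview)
Your plan is correct and matches the paper's approach closely: test (\ref{eq:PLahb}) with $\bphi=\strhLa^{L,n}$ for (\ref{Fstab3aL}), and test (\ref{eq:PLaha}) and (\ref{eq:PLahb}) with ${\mathcal R}_h{\mathcal S}\bw_h^n$ and ${\mathcal P}_h{\mathcal E}\bxi_h^n$ respectively for (\ref{Fstab4aL}), using (\ref{combi}), (\ref{LmpLinf}), (\ref{Rhstab}), (\ref{Phstab}), (\ref{PhstabL2}) and (\ref{GN}) exactly as you indicate.

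One difference worth noting for (\ref{Fstab3aL}). You split the coupling term so that $\intd\pi_h[\,\|\strhLa^{L,n}\|^2\,]$ appears on the right with coefficient $C(L)\,\dt_n$ and then close by discrete Gronwall, producing a constant of order $\exp(CL^2T)$. The paper instead splits the Young inequality the other way, putting the small coefficient on $\intd\pi_h[\,\|\strhLa^{L,n}\|^2\,]$ (specifically $\frac{\dt_n}{4\Wi}$) so that it is absorbed by the $\frac{\dt_n}{2\Wi}\intd\pi_h[\,\|\strhLa^{L,n}\|^2\,]$ already present on the left from the Oldroyd relaxation term; the large coefficient then lands on $\|\grad\buhLa^{L,n}\|^2_{L^2(\D)}$, whose sum is controlled directly by (\ref{Fstab2newaL}). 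This telescopes without Gronwall and yields a constant polynomial in $L$. Both are valid; the paper's split is sharper, and your phrase ``a small multiple of $\|\grad\buhLa^{L,n}\|^2$'' is in fact unnecessary (any multiple will do, since this quantity is summable).

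A minor omission: the skew-symmetrised convective term in (\ref{eq:PLaha}) produces \emph{two} contributions, $\intd[(\buhLa^{L,n-1}\cdot\grad)\buhLa^{L,n}]\cdot\bv$ and $\intd\buhLa^{L,n}\cdot[(\buhLa^{L,n-1}\cdot\grad)\bv]$, which are estimated slightly differently (the first needs $\|\bv\|_{L^s}$ via Sobolev embedding, the second needs only $\|\grad\bv\|_{L^2}$). You only describe the second explicitly, but the first follows by the same Gagliardo--Nirenberg interpolation and does not alter the final exponent $4/\vartheta$.
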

\begin{proof}
Existence  
and the bounds
(\ref{eq:estimate-PLh}) and (\ref{Fstab2newaL})
were proved in Theorem
\ref{dLconthm}.
On choosing $\bphi \equiv \strhLa^{L,n}$ in the version of (\ref{eq:PLahb})
dependent on $L$, it follows
from (\ref{elemident}), (\ref{vecint}),  
(\ref{combi}), 
(\ref{Fstab2newaL}) and (\ref{LmpLinf}) 
on applying a Youngs' inequality that 
\begin{align}
\nonumber
&\frac{1}{2}\intd \pi_h[ \,\|\strhLa^{L,n}\|^2+\|\strhLa^{L,n}-
\strhLa^{L,n-1}\|^2\,] 
\nonumber \\
& \quad
+ \dt_n \alpha \intd \|\grad \strhLa^{L,n}\|^2
+ \frac{\dt_n}{2 \Wi} \intd \pi_h[ \,\|\strhLa^{L,n}\|^2\,]
\nonumber
\\
&\qquad \leq \frac{1}{2}\intd \pi_h[ \,\|\strhLa^{L,n-1}\|^2\,]
+ 2\dt_n
\intd \|\grad \buhLa^{L,n}\| 
\,\|\pi_h[\strhLa^{L,n}\,\BL(\strhLa^{L,n})]\|
\nonumber \\
& \qquad \qquad + \dt_n
\intd \|\buhLa^{L,n-1}\|\,\|\grad \strhLa^{L,n}\| \left(\sum_{m=1}^d \sum_{p=1}^d
\|\Lambda_{m,p}^L( \strhLa^{L,n}) \|^2\right)^{\frac{1}{2}}
+ \frac{\dt_n d|D|}{2\Wi}
\nonumber \\
&\qquad \leq \frac{1}{2} \left[\intd \pi_h[ \,\|\strhLa^{L,n-1}\|^2\,]
+ \dt_n \alpha \intd \|\grad \strhLa^{L,n}\|^2\right]
+ \frac{\dt_n}{4\Wi}\intd \pi_h[\,\|\strhLa^{L,n}\|^2\,] 
\nonumber \\
& \qquad \qquad 
+ \dt_n C(L) \left[ 1 + \intd \|\grad \buhLa^{L,n}\|^2 \right].
\label{sigstab}
\end{align}
Hence, summing (\ref{sigstab}) from $n=1,\dots,m$ for $m=1, \dots,N_T$
yields, on noting (\ref{Fstab2newaL}), the desired result (\ref{Fstab3aL}).

On choosing $\bw = {\mathcal R}_h \left[ {\mathcal S}
\left(\frac{\buhLa^{L,n}-\buhLa^{L,n-1}}{\dt_n}\right)\right] \in \Vhone$ 
in the version of (\ref{eq:PLaha})
dependent on $L$
yields, on noting (\ref{Rh}), (\ref{Swnorm}), (\ref{Rhstab})
 and Sobolev embedding, that
\begin{align}
&\Re \left\|{\mathcal S} \left(\frac{\buhLa^{L,n}-\buhLa^{L,n-1}}{\Delta t_n}\right)
\right\|^2_{H^1(\D)}
\nonumber \\
&\qquad=\Re\int_{\D}
\frac{\buhLa^{L,n}-\buhLa^{L,n-1}}{\Delta t_n} \cdot
{\mathcal R}_h \left[ {\mathcal S}
\left(\frac{\buhLa^{L,n}-\buhLa^{L,n-1}}{\Delta t_n}\right)\right]
\nonumber
\\
&\qquad = -
\intd
\left[(1-\e) \grad
\buhLa^{L,n}  
+ \frac{\e}{\Wi} 
\pi_h[\BL(\strhLa^{L,n})] \right] 
: \grad \left[{\mathcal R}_h \left[{\mathcal S}
\left(\frac{\buhLa^{L,n}-\buhLa^{L,n-1}}{\Delta t_n}\right)\right] \right]
\nonumber \\ 
& \hspace{1in} 
- \frac{\Re}{2} \intd 
\left((\buhLa^{L,n-1} \cdot \grad) \buhLa^{L,n}\right)
\cdot \displaystyle
{\mathcal R}_h \left[{\mathcal S}
\left(\frac{\buhLa^{L,n}-\buhLa^{L,n-1}}{\Delta t_n}\right)
\right]
\nonumber \\
& \hspace{1in} + \frac{\Re}{2}
\intd
\buhLa^{L,n} \cdot
\left((\buhLa^{L,n-1} \cdot \grad)
\left[ {\mathcal R}_h \left[{\mathcal S} 
\left(\frac{\buhLa^{L,n}-\buhLa^{L,n-1}}{\Delta t_n}\right)
\right]\right]\right) \nonumber \\
& \hspace{1in} + \left \langle \f^n, 
\displaystyle
{\mathcal R}_h \left[{\mathcal S}
\left(\frac{\buhLa^{L,n}-\buhLa^{L,n-1}}{\Delta t_n}\right)
\right] \right \rangle_{H^1_0(\D)}
\nonumber \\
& \qquad \leq C \bigl[\|\pi_h[\BL(\strhLa^{L,n})]\|^2_{L^2(\D)}
+\|\grad \buhLa^{L,n}\|^2_{L^2(\D)}
+\|\,\|\buhLa^{L,n-1}\|\,\|\buhLa^{L,n}\|\,\|^2_{L^2(\D)}
\nonumber\\
& \hspace{1in}
+ \|\,\|\buhLa^{L,n-1}\| \,\|\grad \buhLa^{L,n}\|\,\|_{L^{1+\theta}(\D)}^2
+ \|\f^n\|_{H^{-1}(\D)}^2 \bigr],
\label{equndtb}
\end{align}
for any $\theta>0$ if $d=2$ and for $\theta=\frac{1}{5}$ if $d=3$.
Applying the Cauchy--Schwarz and the algebraic-geometric mean
inequalities, in conjunction with 
(\ref{GN}) and the Poincar\'{e} inequality
(\ref{eq:poincare})
yields that
\begin{align}
\nonumber
\displaystyle
\|\,\|\buhLa^{L,n-1}\| \,\|\buhLa^{L,n}\|\,\|_{L^2(\D)}^2
&\leq
\|\buhLa^{L,n-1}\|^2_{L^4(\D)}
\,\|\buhLa^{L,n}\|^2_{L^4(\D)}
\leq
\textstyle \frac{1}{2}
\displaystyle\sum_{m=n-1}^n
\|\buhLa^{L,m}\|^4_{L^4(\D)}
\\ &
\leq C \displaystyle\sum_{m=n-1}^n
\left[
\|\buhLa^{L,m}\|^{4-d}_{L^2(\D)}\,
\|\grad \buhLa^{L,m}\|^{d}_{L^2(\D)}\,
\right].
\label{L4sob}
\end{align}
Similarly, we have for any $\theta \in (0,1)$, if $d=2$, that
\begin{subequations}
\begin{align}
\nonumber
\|\,\|\buhLa^{L,n-1}\| \,\|\grad \buhLa^{L,n}\|\,\|_{L^{1+\theta}(\D)}^2
& \leq
\|\buhLa^{L,n-1}\|_{L^{\frac{2(1+\theta)}{1-\theta}}(\D)}^2
\,\|\grad \buhLa^{L,n}\|_{L^{2}(\D)}^2
\\
&
\leq C \|\buhLa^{L,n-1}\|_{L^2(\D)}^{\frac{2(1-\theta)}{1+\theta}}
\displaystyle\sum_{m=n-1}^n
\|\grad \buhLa^{L,m}\|^{\frac{2(1+3\theta)}{1+\theta}}_{L^2(\D)}\
\,;
\label{sob1}
\end{align}
and if $d=3$, $(\theta = \frac{1}{5})$, that
\begin{align}
\nonumber
\|\,\|\buhLa^{L,n-1}\| \,\|\grad \buhLa^{L,n}\|\,\|_{L^{\frac{6}{5}}(\D)}^2
& \leq
\|\buhLa^{L,n-1}\|_{L^{3}(\D)}^2
\,\|\grad \buhLa^{L,n}\|_{L^{2}(\D)}^2
\\
&\leq C \|\buhLa^{L,n-1}\|_{L^2(\D)} \displaystyle\sum_{m=n-1}^n
\|\grad \buhLa^{L,m}\|^{3}_{L^2(\D)}.
\label{sob2}
\end{align}
\end{subequations}
On taking the $\frac{2}{\vartheta}$
power of both sides of
(\ref{equndtb}), recall (\ref{varth}),
multiplying by $\Delta t_n$,
summing from $n=1,\dots,N_T$
and noting (\ref{L4sob}), (\ref{sob1})
with $\theta = \frac{\vartheta - 2}{6 - \vartheta}
\Leftrightarrow \vartheta = \frac{2(1+3\theta)}{(1+\theta)}$,
(\ref{sob2}), (\ref{Deltatqu}), (\ref{fncont}),
(\ref{Fstab2newaL}), (\ref{idatah}) and (\ref{BdLphi}) yields that
\begin{align}
\nonumber
&\sum_{n=1}^{N_T} \Delta t_n 
\left \|{\mathcal S} \left(\frac{\buhLa^{L,n}-\buhLa^{L,n-1}}{\Delta
t_n}\right)\right\|^{\frac{4}{\vartheta}}_{H^1(\D)}
\\
&\hspace{0.5in} \leq CL^2
+ C(T)\, \left[\,\sum_{n=1}^{N_T} \Delta t_n
\,\left[\|\grad \buhLa^{L,n}\|_{L^2(\D)}^2 +
\|\f^n\|^2_{H^{-1}(\D)}
\right]
\right]^{\frac{2}{\vartheta}}
\nonumber \\
&\hspace{1in} 
+C\,\left[1+ \max_{n=0,\dots, N_T}
\left(\|\buhLa^{L,n}\|^2_{L^2(\D)} 
\right) 
 \right]
\,\left[\sum_{n=0}^{N_T} \Delta t_n\, 
\|\grad \buhLa^{L,n}\|^2_{L^2(\D)}
\right]\nonumber\\
& \hspace{0.5in} \leq C(L,T);
\label{corotfor3dis}
\end{align}
and hence the first bound in (\ref{Fstab4aL}).

On choosing $\bphi
= {\mathcal P}_h\left[{\mathcal E} \left( \frac{\strhLa^{L,n}-\strhLa^{L,n-1}}
{\Delta t_n}
\right) \right] \in \Shone$ in 
the version of (\ref{eq:PLahb})
dependent on $L$
yields,
on noting (\ref{Ph}), (\ref{Echi}),
(\ref{vecint}), (\ref{PhstabL2}), 
(\ref{Phstab}), (\ref{combi}) and
(\ref{LmpLinf}), that
\begin{align}
&\left
\|{\mathcal E} \left( \frac{\strhLa^{L,n}-\strhLa^{L,n-1}}{\Delta t_n}
\right)
\right\|^2_{H^1(\D)}
\nonumber \\
&\qquad =\intd \pi_h\left[ \left(\frac{\strhLa^{L,n}-\strhLa^{L,n-1}}{\Delta t_n}
\right) :
 {\mathcal P}_h \left[{\mathcal E} \left( \frac{\strhLa^{L,n}
 -\strhLa^{L,n-1}}{\Delta t_n}
\right) \right] \right]
\nonumber
\\
&\qquad =\frac{1}{\Wi}\intd \pi_h \left[
(\I-\strhLa^{L,n}) : {\mathcal P}_h 
\left[{\mathcal E} \left( \frac{\strhLa^{L,n}-\strhLa^{L,n-1}}{\Delta t_n}
\right) \right] \right]
\nonumber \\
& \qquad \qquad -\alpha \intd
\grad  \strhLa^{L,n} ::
\grad \left[{\mathcal P}_h \left[{\mathcal E} 
\left( \frac{\strhLa^{L,n}-\strhLa^{L,n-1}}{\Delta t_n}
\right) \right] \right] 
\nonumber \\
& \qquad \qquad + 2 \intd \grad \buhLa^{L,n} :
\pi_{h}
\left[{\mathcal P}_h \left[{\mathcal E} 
\left( \frac{\strhLa^{L,n}-\strhLa^{L,n-1}}{\Delta t_n}
\right) \right] \BL(\strhLa^{L,n})\right]
\nonumber \\
& \qquad \qquad +
\intd \sum_{m=1}^d \sum_{p=1}^d 
[\buhLa^{L,n-1}]_m \,
\Lambda^{L}_{m,p}(\strhLa^{L,n}) : \frac{\partial}{\partial \xx_p}
\left[{\mathcal P}_h 
\left[{\mathcal E} \left( \frac{\strhLa^{L,n}-\strhLa^{L,n-1}}{\Delta t_n}
\right) \right] \right]
\nonumber \\
& \qquad 
\leq
C \intd \pi_h [\,\|\strhLa^{L,n}\|^2\,]
\nonumber \\
& \qquad \quad + C(L) \left[ 1
+\alpha \|\grad \strhLa^{L,n}\|_{L^2(\D)}^2
+ \|\grad \buhLa^{L,n}\|_{L^2(\D)}^2
+ \|\buhLa^{L,n-1}\|_{L^2(\D)}^2
\right]
\,. 
\label{psitG1}
\end{align}
Multiplying (\ref{psitG1}) by $\Delta t_n$, summing from $n=1,...,N_T$
and noting (\ref{Fstab2newaL}) and (\ref{Fstab3aL})
yields the second bound in (\ref{Fstab4aL}).
\end{proof}

\subsection{Convergence of the discrete solutions}
First we note the following result.
\begin{lemma}\label{RQlem}
For $k=1,\ldots, N_K$, it follows that
\begin{align}
\int_{K_k} \|\bchi^{-1}\| &\leq C\,\int_{K_k} \pi_h[ \,\|\bchi^{-1}\|\,]
\qquad \forall \bchi \in \ShonePD\,. 
\label{RQlemeq}
\end{align}
\end{lemma}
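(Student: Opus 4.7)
The plan is to exploit the pointwise representation of $\bchi$ on each simplex together with operator convexity of matrix inversion. On $K_k$, since $\bchi \in \ShonePD$, I can write $\bchi(\xx) = \sum_{i=0}^d \eta_i^k(\xx)\,\bchi_i^k$ where $\bchi_i^k := \bchi(P_i^k) \in \RSPD$ and the barycentric coordinates satisfy $\eta_i^k(\xx) \geq 0$ with $\sum_{i=0}^d \eta_i^k(\xx) = 1$. Since the cone $\RSPD$ is convex, $\bchi(\xx) \in \RSPD$ for all $\xx \in K_k$, so $\bchi^{-1}(\xx)$ is well-defined and $\|\bchi^{-1}\|$ is a continuous function on $K_k$. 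Consequently $\pi_h[\|\bchi^{-1}\|](\xx) = \sum_{i=0}^d \eta_i^k(\xx)\,\|[\bchi_i^k]^{-1}\|$ on $K_k$.

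The key step will be the classical Loewner-order inequality expressing operator convexity of $A \mapsto A^{-1}$ on $\RSPD$: for any $\{A_i\}_{i=0}^d \subset \RSPD$ and weights $\lambda_i \geq 0$ with $\sum_i \lambda_i = 1$,
\begin{equation*}
\Bigl(\sum_{i=0}^d \lambda_i A_i\Bigr)^{-1} \preceq \sum_{i=0}^d \lambda_i A_i^{-1},
\end{equation*}
where $\preceq$ denotes the positive semi-definite ordering. Applied pointwise with $\lambda_i = \eta_i^k(\xx)$ and $A_i = \bchi_i^k$, this yields $\bchi^{-1}(\xx) \preceq \sum_{i=0}^d \eta_i^k(\xx)\,[\bchi_i^k]^{-1}$.

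Since the trace is monotone with respect to the Loewner ordering, taking traces gives $\tr(\bchi^{-1}(\xx)) \leq \sum_{i=0}^d \eta_i^k(\xx)\,\tr([\bchi_i^k]^{-1})$. Next I combine this with the elementary fact that for any $A \in \RSPD$ with positive eigenvalues $\{\mu_j\}_{j=1}^d$,
\begin{equation*}
\|A\|^2 = \sum_{j=1}^d \mu_j^2 \leq \Bigl(\sum_{j=1}^d \mu_j\Bigr)^2 = (\tr A)^2
\quad\text{and}\quad
\tr A \leq \sqrt{d}\,\|A\|,
\end{equation*}
so that $\|A\| \leq \tr A \leq \sqrt{d}\,\|A\|$. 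Applying this to $\bchi^{-1}(\xx)$ on the left and to each $[\bchi_i^k]^{-1}$ on the right leads to the pointwise estimate $\|\bchi^{-1}(\xx)\| \leq \sqrt{d}\,\pi_h[\|\bchi^{-1}\|](\xx)$ on $K_k$, and integration over $K_k$ concludes the proof with $C = \sqrt{d}$.

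The only substantive obstacle is invoking the operator convexity of matrix inversion, which is a standard but non-trivial fact from matrix analysis (a short proof uses $A^{-1} = \sup\{2X - XAX\}$ over symmetric $X$, exhibiting $-A^{-1}$ as a supremum of affine functions of $A$ and hence concave; alternatively it follows from the integral representation $A^{-1} = \int_0^\infty e^{-tA}\,dt$ combined with convexity of the matrix exponential's Laplace transform). Everything else is manipulation of barycentric coordinates, trace-norm equivalence on $\RSPD$, and the definition of $\pi_h$.
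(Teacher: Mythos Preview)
Your proof is correct, and it takes a genuinely different (and cleaner) route than the paper.

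The paper's argument works with the spectral norm $\|\cdot\|_2$. It first shows, via the variational characterisation of the smallest eigenvalue, the pointwise bound
\[
\|\bchi^{-1}(\xx)\|_2 \leq \bigl[\pi_h[\,\|\bchi^{-1}\|_2^{-1}\,](\xx)\bigr]^{-1}
\qquad \mbox{on } K_k,
\]
which is equivalent to concavity of $A \mapsto \lambda_{\min}(A)$ on $\RSPD$. Because the right-hand side is the reciprocal of a nonnegative interpolant rather than an interpolant itself, the paper cannot integrate directly; it first passes to $\|\pi_h[\|\bchi^{-1}\|]\|_{L^\infty(K_k)}$ and then invokes the local inverse inequality (\ref{inverse}) to recover the $L^1(K_k)$ norm of $\pi_h[\|\bchi^{-1}\|]$. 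The resulting constant $C$ therefore depends on the shape-regularity constant of the mesh.

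Your argument bypasses all of this by using the stronger fact that $A \mapsto A^{-1}$ is operator convex on $\RSPD$, which after taking traces and combining with $\|A\| \leq \tr A \leq \sqrt{d}\,\|A\|$ for $A \in \RSPD$ yields the pointwise inequality
\[
\|\bchi^{-1}(\xx)\| \leq \sqrt{d}\,\pi_h[\,\|\bchi^{-1}\|\,](\xx)
\qquad \mbox{on } K_k.
\]
This is directly integrable, gives the explicit constant $C = \sqrt{d}$, and requires no inverse inequality or mesh-dependent argument. The only cost is invoking operator convexity of inversion, which is standard matrix analysis but slightly less elementary than the smallest-eigenvalue bound the paper uses.
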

\begin{proof}
We recall the well-known result about equivalence of norms
\begin{align}
\frac{1}{d^{\frac{1}{2}}}\|\bphi\|
\leq \|\bphi\|_2 
:= \sup_{\bv \in \R^d,\ \|\bv\|=1} \|\bphi \,\bv\|
\leq \|\bphi\| \qquad \forall \bphi \in \R^{d \times d}\,.
\label{2normeq}
\end{align}
We recall also that if $\bphi \in \RSPD$, then 
\begin{align}
\bz^T \bphi\,\bz \geq \|\bphi^{-1}\|_2^{-1} \|\bz \|^2 \qquad \forall \bz \in \R^d;
\label{eigenmin}
\end{align}
that is, $\|\bphi^{-1}\|_2^{-1}$ is the smallest eigenvalue of $\bphi$. 
For $\bchi \in \ShonePD$, on adopting the notation in the proof of Lemma \ref{lem:inf-bound},
we have that 
\begin{align}
\bchi(\xx) &= \sum_{j=0}^d \bchi(P_j^k)\,\eta^k_j(\xx) \qquad \forall 
\xx \in K_k,  \qquad k=1, \ldots,N_K.
\label{bchiK}
\end{align}
Then for $\bv \in \R^d$, with $\|\bv\|=1$, it follows from (\ref{bchiK})
and (\ref{eigenmin}) that
\begin{align} 
\|\bchi^{-1}(\xx) \,\bv \| &\geq \bv^T \,\bchi^{-1}(\xx) \,\bv 
= (\bchi^{-1}(\xx) \,\bv)^T \,\bchi(\xx)\, (\bchi^{-1}(\xx)\, \bv) 
\nonumber \\
& \geq \left[ \sum_{j=0} ^d \|\bchi^{-1}(P_j^k)\|_2^{-1} \,\eta^k_j(\xx) \right] 
\|\bchi^{-1}(\xx)\, \bv\|^2 
\qquad \forall 
\xx \in K_k;  
\label{bchiKbelow}
\end{align}
where we have noted that $\eta^k_j(\xx) \geq 0$, for all $\xx \in K_k$,
and $\bchi(P_j^k) \in \RSPD$, $j=0,\ldots, d$.
The bound (\ref{bchiKbelow}), on noting (\ref{2normeq}), yields that
\begin{align}
&\|\bchi^{-1}(\xx)\|_2 \leq \left[[\pi_h [ \,\|\bchi^{-1}\|^{-1}_2\,]\,](\xx)\right]^{-1}
\qquad \forall \xx \in K_k, \qquad k=1,\ldots,N_K, 
\nonumber \\
&\hspace{3in}\qquad \forall \bchi \in \ShonePD\,.  
\label{RQ}
\end{align}
Hence it follows from (\ref{2normeq}), (\ref{RQ}) and (\ref{inverse}) 
with $\pi_h [ \,\|\bchi^{-1}\|\,]$
that
\begin{align}
\frac{1}{d^{\frac{1}{2}}} \int_{K_k} \|\bchi^{-1}\| &\leq
\int_{K_k}
\left[\pi_h [ \,\|\bchi^{-1}\|^{-1}_2\,]\right]^{-1} 
\leq |K_k|\, \|\pi_h [ \,\|\bchi^{-1}\|\,]\|_{L^\infty(K_k)}
\nonumber 
\\
&\leq C\,\int_{K_k} \pi_h[ \,\|\bchi^{-1}\|\,]
\qquad k=1,\ldots,N_K, \qquad \forall \bchi \in \ShonePD\,,  
\label{RQmore} 
\end{align}
and hence the desired result (\ref{RQlemeq}).
\end{proof}

We note from (\ref{ipmat}), (\ref{eq:tensor-g}) and (\ref{eq:Bd}) that
\begin{align}
\|\bphi^{-1}\| \leq \|\,[\beta^{L}(\bphi)]^{-1}\| \qquad \forall \bphi \in \RSPD\,.
\label{betainv}
\end{align} 
Therefore (\ref{Fstab2newaL}), (\ref{Fstab3aL},b), (\ref{idatah}), 
(\ref{interpinf}), (\ref{betainv}), (\ref{RQmore})
and (\ref{timeconaL}--c)
yield that
\begin{subequations}
\begin{align}
\nonumber
&\sup_{t \in (0,T)} \|\buhLa^{L,\dt(,\pm)}\|^2_{L^2(\D)}
+ 
\int_{0}^T 
\|\grad \buhLa^{L,\dt (,\pm)}\|^2_{L^2(\D)}
\,dt
\\ &\hspace{0.4in}
+\int_0^T  
\left[ \|\,[\strhLa^{L,\dt,+}]^{-1}\|_{L^{1}(\D)}
+
\frac{\|\buhLa^{L,\dt,+}
-\buhLa^{L,\dt,-}\|^2_{L^2(\D)}}{\Delta(t)}
\right]
 dt \leq C\,,
\label{stab1c}
\\
&\sup_{t \in (0,T)} 
\|\strhLa^{L,\dt(,\pm)}\|^2_{L^2(\D)}
\nonumber \\
\label{stab2c}
& \quad +  \int_{0}^T \left[\alpha \|\grad \strhLa^{L,\dt(, \pm)}\|_{L^2(\D)}^2
+ \frac{\|\strhLa^{L,\dt,+}-\strhLa^{L,\dt,-}\|^2_{L^2(\D)}}{\Delta(t)}
\right]
dt \leq C(L)\,,\\
&
\int_0^T \left[ \left\|{\mathcal S}\,\frac{\partial \buhLa^{L,\dt}}{\partial t}
\right\|_{H^1(\D)}^{\frac{4}{\vartheta}}
+ 
\left\|{\mathcal E}\,\frac{\partial 
\strhLa^{L,\dt}}{\partial t} \right\|_{H^1(\D)}^{2}
\right] dt
\leq C(L,T)\,;
\label{stab3c}
\end{align}
\end{subequations}
where $\vartheta$ is as defined in (\ref{varth}).

We are now in a position to prove the following convergence result 
for (P$^{L,\Delta t}_{\alpha,h}$).
\begin{theorem}\label{convaL}
There exists a subsequence of $\{(\buhLa^{L,\dt},\strhLa^{L,\dt})
\}_{h>0,\Delta t>0}$, 
and functions $\buaL \in
L^{\infty}(0,$ $T;[L^2(\D))]^d)\cap L^{2}(0,T;\Uz) \cap
W^{1,\frac{4}{\vartheta}}(0,T;\Uz')$ and
$\straL \in L^{\infty}(0,T;[L^{2}(\D)]^{d \times d}_{SPD})
\cap L^{2}(0,T;[H^{1}(\D)]^{d \times d}_{SPD})$
$\cap H^1(0,T;([H^{1}(\D)]^{d \times d}_S)')$ 
such that, as $h,\,\Delta t \rightarrow 0_+$,
\begin{subequations}
\begin{alignat}{2}
\buhLa^{L,\Delta t (,\pm)} &\rightarrow \buaL \qquad &&\mbox{weak* in }
L^{\infty}(0,T;[L^2(\D)]^d), \label{uwconL2}\\
\buhLa^{L,\Delta t (,\pm)} &\rightarrow \buaL \qquad &&\mbox{weakly in }
L^{2}(0,T;[H^1(\D)]^{d}), \label{uwconH1}\\
{\mathcal S} \frac{\partial \buhLa^{L,\dt}}{\partial t} 
&\rightarrow {\mathcal S} \frac{\partial \buaL}{\partial t}
 \qquad &&\mbox{weakly in }
L^{\frac{4}{\vartheta}}(0,T;\Uz), \label{utwconL2}\\
\buhLa^{L,\Delta t (,\pm)} &\rightarrow \buaL
\qquad &&\mbox{strongly in }
L^{2}(0,T;[L^{r}(\D)]^d), \label{usconL2}
\end{alignat}
\end{subequations}
and
\begin{subequations}
\begin{alignat}{2}
\strhLa^{L,\Delta t (,\pm)} &\rightarrow
\straL
\quad &&\mbox{weak* in }
L^{\infty}(0,T;[L^2(\D)]^{d \times d}), \label{psiwconL2}\\
\strhLa^{L,\Delta t(,\pm)}
&\rightarrow  \straL
\quad &&\mbox{weakly in }
L^{2}(0,T;[H^1(\D)]^{d \times d}), \label{psiwconH1x}\\
{\mathcal E} \frac{\partial \strhLa^{L,\Delta t}}{\partial t}
&\rightarrow {\mathcal E} \frac{\partial \straL}{\partial t}
 \qquad &&\mbox{weakly in }
L^{2}(0,T;[H^1(\D)]^{d \times d}), \label{psitwconL2}\\
\strhLa^{L,\Delta t (,\pm)} &\rightarrow
\straL
\qquad &&\mbox{strongly in }
L^{2}(0,T;[L^{r}(\D)]^{d \times d}), \label{psisconL2}
\\
\pi_h[\BL(
\strhLa^{L,\Delta t (,\pm)})] &\rightarrow
\beta^L(\straL)
\qquad &&\mbox{strongly in }
L^{2}(0,T;[L^{2}(\D)]^{d \times d}),
\label{piBdLconv}
\\
\Lambda^L_{m,p}(\strhLa^{L,\Delta t (,\pm)}) &\rightarrow
\beta^L(\straL)\,\delta_{mp}
\qquad &&\mbox{strongly in }
L^{2}(0,T;[L^{2}(\D)]^{d \times d}),
\nonumber\\
&&& \hspace{1in}
\quad m,p =1,\dots,d,
 \label{XXxLinf}
\end{alignat}
\end{subequations}
where $\vartheta$ is defined by (\ref{varth})
and $r \in [1,\infty)$ if $d=2$ and $r \in [1,6)$ if $d=3$.

Furthermore, $(\buaL,\straL)$ solve the following problem:

{({\bf P}$^{L}_\alpha$)} Find $\buaL \in L^{\infty}(0,T;[L^{2}(\D)]^d)
\cap L^{2}(0,T;\Uz) \cap W^{1,\frac{4}{\vartheta}}(0,T;\Uz')$
and $\straL
\in L^{\infty}(0,T[L^{2}(\D)]^{d \times d}_{SPD})\cap
L^{2}(0,T;[H^{1}(\D)]^{d \times d}_{SPD})\cap H^{1}(0,T;([H^{1}(\D)]^{d \times d}_S)')$
such that 
\begin{subequations}
\begin{align}
\nonumber
& \displaystyle\int_{0}^{T} {\rm Re} \left\langle \frac{\partial \buaL}{\partial t},
\bv \right\rangle_{\Uz}
dt  
\nonumber \\
& \qquad + \int_{0}^T \intd \left[ 
(1-\e) \,\grad \buaL: \grad \bv +
{\rm Re} \left[ (\buaL \cdot \grad) \buaL
\right]\,\cdot\,\bv \right]  dt
\nonumber 
\\
&\hspace{0.5in} =
\int_0^T \langle \f , \bv \rangle_{H^1_0(\D)} \,dt
- \frac{\e}{\rm Wi} \int_{0}^{T} \intd \beta^L(\straL) 
: \grad \bv \, dt  
\nonumber \\
& \hspace{2in}
\qquad \forall \bv \in L^{\frac{4}{4-\vartheta}}(0,T;\Uz),
\label{weak1d}
\\
&\int_{0}^T  
\nonumber
\left \langle \frac{\partial \straL}{\partial t}
,\bphi
\right \rangle_{H^1(\D)} dt
\\
& \qquad + \int_{0}^T  \int_{\D} \left[
(\buaL \cdot \grad) [\beta^L(\straL)] : \bphi +
\alpha\,\grad \straL :: \grad \bphi \right] \,  dt
\nonumber 
\\
&\hspace{0.5in} = \int_{0}^T \int_{\D}
\left[2\,(\grad \buaL)\,\beta^L(\straL) - \frac{1}{\rm Wi}
(\straL-\I) \right] : \bphi \,  dt
\nonumber \\
& \hspace{2in}\qquad \forall
\bphi \in L^{2}(0,T;[H^1(\D)]^{d \times d}_S);
\label{weak2d}
\\
\mbox{and}\qquad
& \lim_{t \rightarrow 0_+} \intd (\buaL(t,\xx)-\bu^0(\xx))\,\cdot\,\bv=0
\nonumber
\\
&\hspace{1.5in} \forall \bv \in {\rm H} 
:= \{ \bw \in [L^2(\D)]^d : {\rm div} \,\bw =0 \mbox{ in } \D\}\,, 
\nonumber  \\
& \lim_{t \rightarrow 0_+} \intd (\straL(t,\xx)-\strs^0(\xx)):\bchi=0
\qquad \forall \bchi \in [L^2(\D)]^{d\times d}_{SPD}\,.
\label{weakic}
\end{align}
\end{subequations}
\end{theorem}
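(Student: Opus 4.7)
The approach is to extract weakly and strongly convergent subsequences from the a priori bounds (\ref{stab1c})--(\ref{stab3c}), to identify the limits of the nonlinear quantities with the help of the consistency estimate of Lemma~\ref{lemMXitt}, and then to pass to the limit in the discrete weak formulations (\ref{equncon})--(\ref{eqpsincon}) tested against $\mathcal{R}_h\bv$ and $\mathcal{P}_h\bphi$ for smooth $\bv\in C^1([0,T];\Uz)$ and $\bphi\in C^1([0,T];[H^1(\D)]^{d\times d}_S)$, concluding by density via (\ref{Rhstab})--(\ref{Phstab}).

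Reflexivity applied to (\ref{stab1c})--(\ref{stab3c}) yields a subsequence along which the weak and weak-$\ast$ limits (\ref{uwconL2})--(\ref{utwconL2}) and (\ref{psiwconL2})--(\ref{psitwconL2}) hold. That $\buhLa^{L,\dt}$, $\buhLa^{L,\dt,+}$ and $\buhLa^{L,\dt,-}$ share a common limit $\buaL$ (and analogously for the stress) follows from the $L^2(\D_T)$-bound on $\buhLa^{L,\dt,+}-\buhLa^{L,\dt,-}$ of order $\dt^{1/2}$ in (\ref{stab1c}) together with (\ref{eqtime+}); the fact that $\buaL(t,\cdot)\in\Uz$ for a.a.\ $t$ follows from $\Vhone\subset\Uz$ via (\ref{Vhconv}). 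The strong convergences (\ref{usconL2}) and (\ref{psisconL2}) then come from the Aubin--Simon compactness (\ref{compact1}) with $(\mathcal{Y}_0,\mathcal{Y},\mathcal{Y}_1)=(\Uz,[L^r(\D)]^d,\Uz')$ for the velocity and with $([H^1(\D)]^{d\times d}_S,[L^r(\D)]^{d\times d},([H^1(\D)]^{d\times d}_S)')$ for the stress, using the compact Sobolev embeddings for the admissible range of $r$. Passing to a further subsequence we may assume pointwise a.e.\ convergence on $\D_T$; combined with the $L^1(\D_T)$-bound on $\|[\strhLa^{L,\dt,+}]^{-1}\|$ in (\ref{stab1c}) and the inequality (\ref{betainv}), Fatou's lemma gives $\|[\straL]^{-1}\|\in L^1(\D_T)$, so that $\straL\in\RSPD$ a.e.\ and $\BL(\straL)$ is well defined.

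The convergences (\ref{piBdLconv}) and (\ref{XXxLinf}) are obtained via the splittings
\[
\pi_h[\BL(\strhLa^{L,\dt,+})]-\BL(\straL) = \bigl(\pi_h[\BL(\strhLa^{L,\dt,+})]-\BL(\strhLa^{L,\dt,+})\bigr) + \bigl(\BL(\strhLa^{L,\dt,+})-\BL(\straL)\bigr),
\]
and likewise for $\Lambda^L_{m,p}$: the first summand is controlled in $L^2(\D_T)$ by (\ref{MXittxbd}) together with the $\alpha$-diffusion bound on $\grad\strhLa^{L,\dt,+}$ in (\ref{stab2c}) and $h\to 0_+$, and the second by the Lipschitz property (\ref{Lip}) of $\BL$ and the strong convergence (\ref{psisconL2}). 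With these convergences in hand, (\ref{equncon})--(\ref{eqpsincon}) pass term by term to (\ref{weak1d})--(\ref{weak2d}): the time-derivative terms produce $\langle\partial_t\buaL,\bv\rangle_{\Uz}$ and $\langle\partial_t\straL,\bphi\rangle_{H^1(\D)}$ via (\ref{utwconL2}), (\ref{psitwconL2}) after integration by parts in time; the inertial convection via (\ref{usconL2}) combined with (\ref{uwconH1}); the viscous and stress-diffusion terms via (\ref{uwconH1}), (\ref{psiwconH1x}); the extra-stress divergence and the stretching term via (\ref{piBdLconv}) together with (\ref{uwconH1}); and the stress-convection term by first rewriting
\[
\sum_{m,p}[\buhLa^{L,\dt,-}]_m\Lambda^L_{m,p}(\strhLa^{L,\dt,+}):\frac{\partial(\mathcal{P}_h\bphi)}{\partial\xx_p} = (\buhLa^{L,\dt,-}\cdot\grad)\BL(\strhLa^{L,\dt,+}):(\mathcal{P}_h\bphi)
\]
modulo an $L^1(\D_T)$-vanishing consistency error controlled by (\ref{XXxLinf}) and (\ref{uwconH1}), and then invoking (\ref{usconL2}) and (\ref{piBdLconv}) after one integration by parts. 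The $\pi_h$ (vertex-sampling) commutators in (\ref{eqpsincon}) are handled by (\ref{interp2}) together with the $H^1$-stability (\ref{Phstab}). Finally, the initial-data attainment (\ref{weakic}) follows from the $L^2(\D)$-convergences $\buh^0\to\bu^0$ and $\strh^0\to\strs^0$ (deduced from (\ref{proju0})--(\ref{projp0}) and (\ref{idatah})), combined with the weak time-continuity of $\buaL$ into $\Uz'$ and of $\straL$ into $([H^1(\D)]^{d\times d}_S)'$ inherited from (\ref{utwconL2})--(\ref{psitwconL2}).

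The principal obstacle will be the stress-convection limit. The tensors $\Lambda^L_{m,p}$ are not simple pointwise functions of $\strhLa^{L,\dt,+}$ but encode piecewise information through the construction (\ref{Lambdampdef})--(\ref{hLambdajdef}), and proving their $L^2(\D_T)$-closeness to $\BL(\straL)\delta_{mp}$ crucially requires the quasi-uniformity of $\{\mathcal{T}_h\}_{h>0}$, the $\alpha$-diffusion control (\ref{stab2c}) of $\grad\strhLa^{L,\dt,+}$, and the uniform $L^\infty$-bound (\ref{LmpLinf}). Verifying that the rewriting above produces a genuinely vanishing consistency error and yields the correct distributional convection in (\ref{weak2d}) is the central technical point of the argument.
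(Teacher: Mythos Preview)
Your overall strategy is correct and matches the paper's, but there is one genuine error and one place where your choice of discrete test function makes life harder than necessary.

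The error is the claim that $\Vhone\subset\Uz$. This is false: functions in $\Vhone$ are only discretely divergence-free, i.e.\ $\int_\D q\,\div\bv_h=0$ for $q\in\mathrm{Q}_h^1$, not for all $q\in L^2(\D)$. Consequently you cannot take $\mathcal{Y}_0=\Uz$ in the Aubin--Simon argument (the discrete sequence does not live there), and $\buaL\in L^2(0,T;\Uz)$ does not follow from (\ref{Vhconv}). The paper instead uses $\mathcal{Y}_0=[H^1(\D)]^d$ for compactness, and obtains $\buaL\in\Uz$ a posteriori from the weak limit (\ref{uwconH1}) by testing $\int_\D q\,\div\buhLa^{L,\Delta t,+}=0$ against $q\in\mathrm{Q}_h^1$ and using that $\bigcup_{h>0}\mathrm{Q}_h^1$ is dense in $L^2(\D)$.

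For the stress equation you test against $\mathcal{P}_h\bphi$, whereas the paper uses $\pi_h\bphi$ for smooth $\bphi$. The vertex-sampling commutator estimate (\ref{interp2}) needs an $L^\infty$ bound on the gradient of the test function; with $\pi_h\bphi$ this is immediate from the smoothness of $\bphi$, but with $\mathcal{P}_h\bphi$ you only have $H^1$-stability (\ref{Phstab}) and must additionally invoke an inverse inequality on the quasi-uniform mesh to recover enough powers of $h$. This is workable, but your sentence ``handled by (\ref{interp2}) together with (\ref{Phstab})'' hides the extra step. The paper's choice of $\pi_h\bphi$ is cleaner for exactly this reason.

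Two minor points where you differ acceptably from the paper: your Fatou argument for the positive definiteness of $\straL$ (using lower semicontinuity of $A\mapsto\|A^{-1}\|$ on $\overline{\RSPD}$) is a valid alternative to the paper's contradiction argument (\ref{weakSPD}); and for the stress-convection limit the paper simply passes to the limit in $\sum_{m,p}[\buhLa^{L,\dt,-}]_m\,\Lambda^L_{m,p}:\partial_p(\pi_h\bphi)$ using (\ref{XXxLinf}), (\ref{usconL2}) and the $L^\infty$ convergence of $\grad\pi_h\bphi$, then integrates by parts in the continuous limit using $\buaL\in\Uz$ --- there is no need for the intermediate rewriting you propose at the discrete level.
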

\begin{proof}
The results  (\ref{uwconL2}--c) follow immediately from the bounds
(\ref{stab1c},c) on noting the notation (\ref{timeconaL}--c).
The denseness of $\bigcup_{h>0}{\rm Q}_h^1$ in $L^2(\D)$
and (\ref{Vh1}) yield that $\buaL \in L^2(0,T;\Uz)$.
The strong convergence result
(\ref{usconL2}) for $\buhLa^{L,\Delta t}$
follows immediately from (\ref{uwconL2}--c) and (\ref{compact1})
with 
$\mu_0=2$, $\mu_1= 4/\vartheta$,
${\cal Y}_0 = [H^1({\cal D})]^d$, 
${\cal Y}_1 = {\mathrm V}'$ with norm $\|{\cal S} \cdot\|_{H^1(\D)}$ and
${\cal Y} = [L^r({\cal D})]^d$ 
for the stated values of $\vartheta$ and $r$.
Here we note that ${\cal Y}$ is a Banach space and ${\cal Y}_i$, $i=0,\,1$, 
are reflexive Banach spaces
with $[L^r({\cal D})]^d$ continuously embedded in ${\mathrm V}'$,
as $[H^{-1}({\cal D})]^d$ is continuously embedded in ${\mathrm V}'$,
and $[H^1({\cal D})]^d$ compactly embedded in $[L^r({\cal D})]^d$ for
the stated values of $r$. 
We now prove (\ref{usconL2}) for
$\buhLa^{L,\Delta t,\pm}$. First we obtain from the bound on the last
term on the left-hand side of (\ref{stab1c}) and
(\ref{eqtime+}) that
\begin{eqnarray}
\|\buhLa^{L,\Delta t}-\buhLa^{L,\Delta t,\pm}\|_{L^{2}(0,T,L^{2}(\D))}^2
\leq C\,\Delta t.
\label{upmr1}
\end{eqnarray}
Second, we note from Sobolev embedding that, for all $\eta \in
L^2(0,T;H^1(\D))$,
\begin{align}
\|\eta\|_{L^2(0,T;L^{r}(\D))} &\leq
\|\eta\|_{L^2(0,T;L^{2}(\D))}^{\theta} \,\|\eta\|_{L^2(0,T;L^{s}(\D))}^{1-\theta}
\nonumber \\
&\leq C\,\|\eta\|_{L^2(0,T;L^{2}(\D))}^{\theta} \,\|\eta\|_{L^2(0,T;H^{1}(\D))}^{1-\theta}
\label{upmr2}
\end{align}
for all $r \in [2,s)$, with any $s \in (2, \infty)$ if $d=2$ or
any $s \in (2,6]$ if $d=3$, and $\theta = [2\,(s-r)]/[r\,(s-2)]
\in (0,1].$ Hence, combining (\ref{upmr1}), (\ref{upmr2}) and
(\ref{usconL2}) for $\buhLa^{L,\Delta t}$ yields (\ref{usconL2}) for
$\buhLa^{L,\Delta t,\pm}$.

Similarly, the results (\ref{psiwconL2}--c)
follow immediately from (\ref{stab2c},c).
The strong convergence result (\ref{psisconL2}) for $\strhLa^{L,\Delta t}$
follows immediately from (\ref{psiwconL2}--c), (\ref{Echinorm}) 
and (\ref{compact1})
with $\mu_0=\mu_1=2$,
${\cal Y}_0 = [H^1({\cal D})]^{d \times d}$, 
${\cal Y}_1 = [H^{-1}({\cal D})]^{d \times d}$ and
${\cal Y} = [L^r({\cal D})]^{d\times d}$ 
for the stated values $r$. 
Similarly to (\ref{upmr1}), the second bound in (\ref{stab2c})
then yields that (\ref{psisconL2}) holds for  $\strhLa^{L,\Delta t(,\pm)}$.

Since $\strhLa^{L,\Delta t(,\pm)} \in L^2(0,T;\ShonePD)$, 
it follows that $\straL$ is symmetric non-negative
definite a.e.\ in $\D_T$. We now establish that $\straL$ is symmetric positive
definite a.e.\ in $\D_T$. Assume that $\straL$ is not symmetric positive
definite a.e.\ in $\D_T^0 \subset \D_T$. Let $\bv \in L^2(0,T;[L^2(\D)]^d)$ be such that
$\straL \,\bv = \bzero$ 
with $\|\bv\|=1$ a.e.\ in $\D_T^0$ and $\bv=\bzero$ a.e.\ in $\D 
\setminus 
\D_T^0$. We then have from (\ref{stab1c}) that  
\begin{align}
\nonumber
|\D^0_T| &= \int_0^T \intd \|\bv\|^2\,dt = \int_0^T \intd 
\left([\strhLa^{L,\Delta t,+}]^{-\frac{1}{2}} \bv \right) : 
\left([\strhLa^{L,\Delta t,+}]^{\frac{1}{2}} \bv \right) dt
\\
&\leq C\, \left( \int_0^T \intd  
\strhLa^{L,\Delta t,+} :: (\bv \bv^T)\, dt \right)^{\frac{1}{2}}\,.
\label{weakSPD}
\end{align}       
Hence it follows from (\ref{weakSPD}) and (\ref{psisconL2}) that $|\D^0_T| =0$.

Finally, the desired results (\ref{piBdLconv},f) follow immediately from (\ref{MXittxbd})
the second bound in (\ref{stab2c}), (\ref{Lip}), 
(\ref{psisconL2}) and the fact that $\straL \in L^\infty(0,T;[L^2(\D)]^{d \times d}_{SPD})$.

It remains to prove that $(\buaL,\straL)$ solves (P$^L_\alpha$).
It follows from (\ref{Vhconv}),
(\ref{stab1c}--c),
(\ref{uwconL2}--d), 
(\ref{piBdLconv}), 
(\ref{fnconv}), (\ref{Sw})
and (\ref{conv0c})
that we may pass to the limit, $h,\,\Delta t \rightarrow 0_+$, in
the $L$-dependent version of (\ref{equncon}) to obtain that $(\buaL, 
\straL)$ 
satisfy (\ref{weak1d}).
It also follows from (\ref{proju0}), (\ref{Vhconv}), 
(\ref{utwconL2},d) and as ${\rm V}$ is dense in ${\rm H}$ 
that $\bua^L(0,\cdot)=\bu^0(\cdot)$ in the
required sense; see (\ref{weakic}) and Lemma 1.4 on p179 in Temam.\cite{Temam}

It follows from (\ref{psiwconL2}--f), (\ref{uwconH1},d), (\ref{Echi}),
(\ref{stab1c}--c), (\ref{interp1},b), 
(\ref{eq:symmetric-tr}) and as $\buaL \in L^2(0,T;{\rm V})$
that we may pass to the limit $h,\, \Delta t \rightarrow 0_+$
in 
the $L$-dependent version of
(\ref{eqpsincon}) with $\bchi \ = \pi_h\, \bphi$
to obtain (\ref{weak2d}) for any
$\bphi \in C^\infty_0(0,T;[C^\infty(\overline{\D})]^{d \times d}_S)$.
For example, in order to pass to the limit on the first term in 
the $L$-dependent version of
(\ref{eqpsincon}),
we note that
\begin{align}
\nonumber
&\int_0^T \int_{\D} \pi_h \left[\left(\frac{\partial \strhLa^{L,\Delta t}}{\partial t}
+ \frac{1}{\Wi} \, \strhLa^{L,\dt,+}\right) : \pi_h\,\bphi \right]   dt \\
&\quad
=
\int_0^T \int_{\D} 
\left\{ 
\left(\frac{\partial \strhLa^{L,\Delta t}}{\partial t}
+ \frac{1}{\Wi} \, \strhLa^{L,\dt,+}\right) : \pi_h \,\bphi
+(I-\pi_h) \left[
\strhLa^{L,\Delta t} : 
\pi_h \left[\frac{\partial \bphi}{\partial t}\right]
\right] \right\}  dt
\nonumber \\
& \hspace{1.8in} - \frac{1}{\Wi} \int_0^T \int_{\D} 
(I-\pi_h) \left[
\strhLa^{L,\Delta t,+} : 
 \pi_h \,\bphi
\right] dt.
\label{intparts} 
\end{align}
The desired result (\ref{weak2d}) then follows from noting that
$ C^\infty_0(0,T;[C^\infty(\overline{\D})]^{d \times d}_S)$
is dense in
$L^2(0,T;$ $[H^1(\D)]^{d \times d}_S)$.
Finally, it follows from (\ref{projp0}),  
(\ref{psitwconL2},d), (\ref{interp1},b)
and (\ref{interpinf})
 that 
$\straL(0,\cdot)=\strs^0(\cdot)$ 
in the
required sense; see (\ref{weakic}) and 
Lemma 1.4 on p179 in Temam.\cite{Temam} 
\end{proof}

\begin{remark}\label{Lalphaindrem}
It follows from (\ref{stab1c},b), (\ref{uwconL2},b) and (\ref{psiwconL2},b) that
\begin{subequations}
\begin{align}
\sup_{t \in (0,T)} \|\buaL \|^2_{L^2(\D)}
+
\int_{0}^T 
\|\grad \buaL \|^2_{L^2(\D)}
dt
&\leq C\,,
\label{finalbd}\\
\sup_{t \in (0,T)} \|\straL \|^2_{L^2(\D)}
+
\alpha \int_{0}^T 
\|\grad \straL \|^2_{L^2(\D)}
dt
&\leq C(L)\,.
\label{finalbdst}
\end{align}
\end{subequations}
Hence, although we have introduced a cut-off $L\gg 1$ 
to certain 
terms, and added diffusion 
with a positive coefficient $\alpha$
in the stress equation compared to the standard Oldroyd-B model;
the bound (\ref{finalbd}) on the velocity $\buaL$ 
is independent of the
parameters $L$ and $\alpha$, where $(\buaL,\straL)$ solves (P$^L_\alpha$),
(\ref{weak1d}--c).
\end{remark}

\section{Convergence of (P$^{\Delta t}_{\alpha,h}$) to
(P$_{\alpha}$) in the case $d=2$} 
\label{sec7}

First, we recall the discrete Gronwall inequality:
\begin{alignat}{2}
(r^0)^2 +(s^0)^2 &\leq (q^0)^2\,, 
\nonumber\\
(r^m)^2 +(s^m)^2 &\leq \sum_{n=0}^{m-1} (\eta^n)^2 (r^n)^2 + \sum_{n=0}^m
(q^n)^2 \qquad &&m\geq 1
\nonumber
\\
\Rightarrow \qquad (r^m)^2 +(s^m)^2 &\leq \exp( \sum_{n=0}^{m-1} (\eta^n)^2)
\sum_{n=0}^m (q^n)^2 \qquad &&m\geq1\,.
\label{DG} 
\end{alignat}

\begin{theorem}
\label{dstabthma}
Under the assumptions of Theorem \ref{dLconthm},
there exists a solution 
$\{(\buhLa^{n},\strhLa^{n})\}_{n=1}^{N_T} \in [\Vhone \times \ShonePD]^{N_T}$ 
of (P$^{\dt}_{\alpha,h}$)
such that the bounds (\ref{eq:estimate-PLh}) and (\ref{Fstab2newaL}) hold.

If $d=2$, $\alpha \leq \frac{1}{2\Wi}$ and $\Delta t \leq C_\star(\zeta^{-1})\,\alpha^{1+\zeta}\,h^2$,
for a $\zeta >0$,
then the following bounds hold:
\begin{align}
&\max_{n=0, \ldots, N_T} \intd
\pi_h[\,\|\strhLa^{n}\|^2\,]
+  \sum_{n=1}^{N_T} \intd \left[\dt_n \alpha \|\grad \strhLa^{n}\|^2 
+ \pi_h[\,\|\strhLa^{n}-\strhLa^{n-1}\|^2\,]
\right]
\nonumber \\
& \hspace{0.5in}+
\sum_{n=1}^{N_T} \dt_n \left\|{\mathcal S}
\left( \frac{\buhLa^{n}-\buhLa^{n-1}}{\dt_n}\right)
\right\|^{\frac{4}{\vartheta}}_{H^1(\D)}
\leq C(\alpha^{-1},T)\,;
\label{Fstab3a}
\end{align}
where $\vartheta \in (2,4)$.
\end{theorem}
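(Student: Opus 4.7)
The plan is to realize the solution of (P$^{\dt}_{\alpha,h}$) as the $L\to\infty$ limit, along a subsequence, of the solutions $(\buhLa^{L,n},\strhLa^{L,n})\in\Vhone\times\ShonePD$ of (P$^{L,\dt}_{\alpha,h}$) already furnished by Theorem \ref{dLconthm}. Since $\Vhone\times\Shone$ is finite-dimensional for fixed $h$, any $L$-uniform bound on $(\buhLa^{L,n},\strhLa^{L,n})$ yields a convergent subsequence. Moreover, the quasi-uniform inverse estimate $\|\bchi\|_{L^{\infty}(\D)}\leq C\,h^{-d/2}\|\bchi\|_{L^2(\D)}$ for $\bchi\in\Shone$, with $d=2$, combined with an $L$-independent $L^2$-bound $\|\strhLa^{L,n}\|_{L^2(\D)}\leq C(\alpha^{-1},T)$, would give $\|\strhLa^{L,n}\|_{L^{\infty}(\D)}\leq C(\alpha^{-1},T)\,h^{-1}$; hence for $L$ larger than this quantity the cut-offs $\BL(\cdot)$ and $\Lambda^L_{m,p}(\cdot)$ become inactive on $\strhLa^{L,n}$, so that $(\buhLa^{L,n},\strhLa^{L,n})$ already solves (P$^{\dt}_{\alpha,h}$) with positive-definite stress, and the bounds (\ref{eq:estimate-PLh}) and (\ref{Fstab2newaL}) descend from their $L$-indexed analogues in Theorem \ref{dLconthm}.

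The core task is thus the $L$-uniform stress bound. Testing (\ref{eq:PLahb}) with $\bphi=\strhLa^{L,n}$ and invoking (\ref{elemident}) on the discrete time derivative isolates two potentially $L$-dependent terms: the coupling $2\intd\grad\buhLa^{L,n}:\pi_h[\strhLa^{L,n}\,\BL(\strhLa^{L,n})]$ and the convection $\intd\sum_{m,p}[\buhLa^{L,n-1}]_m\,\Lambda^L_{m,p}(\strhLa^{L,n}):\partial_{x_p}\strhLa^{L,n}$. For the former I would use submultiplicativity (\ref{normprod}), (\ref{interpinf}) and $\|\BL(\bphi)\|\leq\|\bphi\|$ on $\RSPD$ to obtain $\|\pi_h[\strhLa^{L,n}\BL(\strhLa^{L,n})]\|_{L^2(\D)}^2\leq C\|\strhLa^{L,n}\|_{L^4(\D)}^4$, then apply the two-dimensional Gagliardo--Nirenberg inequality $\|\bchi\|_{L^4}^2\leq C\|\bchi\|_{L^2}\|\bchi\|_{H^1}$ and Young's inequality to absorb an $\|\strhLa^{L,n}\|_{H^1}^2$ contribution into $(\alpha/4)\|\grad\strhLa^{L,n}\|_{L^2}^2$; the residual $C\alpha^{-1}\|\grad\buhLa^{L,n}\|_{L^2}^2\,\|\strhLa^{L,n}\|_{L^2}^2$ is summable in $n$ by (\ref{Fstab2newaL}). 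For the convection term I would split $\Lambda^L_{m,p}=\BL(\cdot)\delta_{mp}+(\Lambda^L_{m,p}-\BL(\cdot)\delta_{mp})$: the symmetric piece is treated analogously to the coupling term, while the error piece is controlled through Lemma \ref{lemMXitt} at the cost of a prefactor $Ch\,\|\buhLa^{L,n-1}\|_{L^\infty(\D)}\|\grad\strhLa^{L,n}\|_{L^2}^2$.

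The time-step restriction $\dt\leq C_\star(\zeta^{-1})\alpha^{1+\zeta}h^2$ enters precisely in absorbing this residual, together with the $\pi_h$-quadrature errors arising from (\ref{interp2}) applied to the discrete time-derivative term. Using the two-dimensional inverse estimate $\|\buhLa^{L,n-1}\|_{L^\infty(\D)}\leq Ch^{-1}\|\buhLa^{L,n-1}\|_{L^2(\D)}$ and a $\zeta$-calibrated Young split, all such contributions are dominated by $(\alpha/2)\dt_n\|\grad\strhLa^{L,n}\|_{L^2}^2$ once $\dt$ satisfies the stated bound. One is then left with a Gronwall-ready inequality
\begin{equation*}
\|\strhLa^{L,n}\|_{L^2}^2+\tfrac{\alpha}{2}\dt_n\|\grad\strhLa^{L,n}\|_{L^2}^2+\|\strhLa^{L,n}-\strhLa^{L,n-1}\|_{L^2}^2\leq\|\strhLa^{L,n-1}\|_{L^2}^2+(\eta^n)^2\|\strhLa^{L,n}\|_{L^2}^2+(q^n)^2,
\end{equation*}
with $\sum_n(\eta^n)^2+\sum_n(q^n)^2\leq C(\alpha^{-1},T)$ by (\ref{eq:estimate-PLh})--(\ref{Fstab2newaL}). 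Applying the discrete Gronwall inequality (\ref{DG}) then delivers the first three pieces of (\ref{Fstab3a}); the $L^{4/\vartheta}$-bound on $\mathcal{S}(\dt_n^{-1}(\buhLa^{L,n}-\buhLa^{L,n-1}))$ follows verbatim from the derivation (\ref{equndtb})--(\ref{corotfor3dis}) of Theorem \ref{dstabthmaL}, which only needed the (now $L$-uniform) $L^2$-bound on $\BL(\strhLa^{L,n})$.

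The hardest part will be the bookkeeping in the previous paragraph: the $\Lambda^L$-interpolation error from Lemma \ref{lemMXitt}, the $\pi_h$-quadrature error from (\ref{interp2}), the two-dimensional Gagliardo--Nirenberg inequality and several inverse estimates must be packaged so that the CFL-type condition $\dt\leq C_\star(\zeta^{-1})\alpha^{1+\zeta}h^2$ is \emph{exactly} sufficient to close the Gronwall loop without an $L$-dependent blow-up. The exponent $1+\zeta$ (rather than $1$) reflects, I expect, the need for a $\zeta$-tunable Young split of the form $ab\leq\alpha^{\zeta}a^2+C\alpha^{-\zeta}b^2$ to balance the $\alpha$-scaling of the dissipation term against the $h$-scaling that enters through the inverse inequalities.
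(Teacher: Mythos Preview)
Your overall architecture is right—test (\ref{eq:PLahb}) with $\bphi=\strhLa^{n}$, exploit the 2D Gagliardo--Nirenberg inequality, close via discrete Gronwall, then copy the velocity time-derivative estimate from Theorem \ref{dstabthmaL}. However, there is a genuine gap in your handling of the convection term, and two of the mechanisms you invoke are red herrings.

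First, the $L\to\infty$ detour is unnecessary: Theorem \ref{dLconthm} already furnishes a solution $\{(\buhLa^{n},\strhLa^{n})\}_{n=1}^{N_T}\in[\Vhone\times\ShonePD]^{N_T}$ to (P$^{\dt}_{\alpha,h}$), together with (\ref{eq:estimate-PLh}) and (\ref{Fstab2newaL}), via the $\delta\to 0_+$ limit. The paper works directly with this solution.

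Second, and more seriously, your splitting $\Lambda_{m,p}=\beta(\cdot)\,\delta_{mp}+(\Lambda_{m,p}-\beta(\cdot)\,\delta_{mp})$ does not close. The error piece, via Lemma \ref{lemMXitt} and the inverse estimate $\|\buhLa^{n-1}\|_{L^\infty(\D)}\leq Ch^{-1}\|\buhLa^{n-1}\|_{L^2(\D)}$, produces a contribution $Ch\,\|\buhLa^{n-1}\|_{L^\infty(\D)}\|\grad\strhLa^{n}\|_{L^2(\D)}^2\leq C\,\|\grad\strhLa^{n}\|_{L^2(\D)}^2$ with a constant independent of $h$, $\Delta t$ and $\alpha$. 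Since this is already homogeneous of degree two in $\grad\strhLa^{n}$, no Young split can shrink its prefactor; it cannot be absorbed into $(\alpha/4)\|\grad\strhLa^{n}\|_{L^2}^2$ for small $\alpha$, and the time-step restriction does not help because no factor of $\Delta t$ or $h$ survives. The paper avoids the splitting altogether: it uses the triple H\"older split $L^{2(2+\zeta)/\zeta}\times L^{2+\zeta}\times L^2$ on the convection term, estimates $\|\Lambda_{m,p}(\strhLa^{n})\|_{L^{2+\zeta}(\D)}\leq C\|\strhLa^{n}\|_{L^{2+\zeta}(\D)}$ directly (since $\Lambda_{m,p}$ is a convex combination of vertex values of $\strhLa^{n}$, cf.\ (\ref{Lambdampdef}) and the construction before it), and then applies Gagliardo--Nirenberg on each factor. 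The Young residual is then $C(\zeta^{-1})\alpha^{-(1+\zeta)}\|\grad\buhLa^{n-1}\|_{L^2}^2\,\|\strhLa^{n}\|_{L^2}^2$; this is precisely where the exponent $1+\zeta$ originates.

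Third, you misidentify the role of $\Delta t\leq C_\star(\zeta^{-1})\alpha^{1+\zeta}h^2$. There are no $\pi_h$-quadrature errors when testing the discrete time-derivative with $\strhLa^{n}$: the identity (\ref{elemident}) holds pointwise at each vertex, so $\pi_h$ commutes with it. The restriction is needed because the Gronwall inequality (\ref{DG}) is explicit, whereas the residual above involves $\|\strhLa^{m}\|_{L^2}^2$ at the top level $n=m$ with coefficient $C(\zeta^{-1})\alpha^{-(1+\zeta)}\Delta t_m\|\grad\buhLa^{m}\|_{L^2}^2$. Bounding $\|\grad\buhLa^{m}\|_{L^2}^2\leq Ch^{-2}$ via the inverse inequality (\ref{Vinverse}) and (\ref{Fstab2newaL}), the implicit coefficient is $\leq C(\zeta^{-1})\alpha^{-(1+\zeta)}\Delta t\,h^{-2}\leq\tfrac{1}{2}$ under the stated condition; this permits absorption into the left-hand side, after which the remaining explicit Gronwall sum $\sum_{n<m}\Delta t_n\|\grad\buhLa^{n}\|_{L^2}^2$ is controlled by (\ref{Fstab2newaL}).
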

\begin{proof}
Existence  
and the bounds
(\ref{eq:estimate-PLh}) and (\ref{Fstab2newaL})
were proved in Theorem
\ref{dLconthm}.

On choosing $\bphi \equiv \strhLa^{n}$ 
in the $L$-independent version of (\ref{eq:PLahb}), 
it follows
from (\ref{elemident})  
and on applying a Youngs' inequality for any $\zeta >0$ that 
\begin{align}
\nonumber
&\frac{1}{2}\intd \pi_h[ \,\|\strhLa^{n}\|^2+\|\strhLa^{n}-
\strhLa^{n-1}\|^2\,] + \dt_n \alpha \intd \|\grad \strhLa^{n}\|^2
+ \frac{\dt_n}{2 \Wi} \intd \pi_h[ \,\|\strhLa^{n}\|^2\,]
\\
&\qquad \leq \frac{1}{2}\intd \pi_h[ \,\|\strhLa^{n-1}\|^2\,] 
+ \frac{\dt_n d\,|\D|}{2\Wi}
+ 2\dt_n
\intd \grad \buhLa^{n} : 
\pi_h[(\strhLa^{n})^2]
\nonumber \\
& \qquad \qquad + \dt_n
\int_\D \sum_{m=1}^d \sum_{p=1}^d 
[\buhLa^{n-1}]_m \,\Lambda_{m,p}(\strhLa^{n})
: \frac{\partial \strhLa^n}{\partial \xx_p}
\nonumber \\
&\qquad \leq \frac{1}{2} \intd \pi_h[\, \|\strhLa^{n-1}\|^2\,]
+ C\,\dt_n \left[ 1 + \|\grad \buhLa^{n}\|_{L^2(\D)} 
\,
\|\pi_h[(\strhLa^{n})^2]\|_{L^2(\D)}\right]
\nonumber \\
& \qquad \qquad 
+ C\,\dt_n\,\|\buhLa^{n-1}\|_{L^{\frac{2(2+\zeta)}{\zeta}}(\D)}\,
\|\Lambda_{m,p}(\strhLa^{n})\|_{L^{2+\zeta}(\D)}\,
\|\grad \strhLa^{n}\|_{L^2(\D)}\,.
\label{sigstaba} 
\end{align}

It follows from (\ref{interpinf}), (\ref{normprod}), (\ref{inverse}) and (\ref{GN}),
as $d=2$, that
\begin{align}
\nonumber
\|\pi_h[(\strhLa^{n})^2]\|_{L^2(\D)}^2
&=
\intd  \|\,\pi_h[(\strhLa^{n})^2]\,\|^2 \leq 
\intd  \pi_h[\, \|(\strhLa^{n})^2\|^2\,] \leq 
\intd  \pi_h[\, \|\strhLa^{n}\|^4\,] \\
& = \sum_{k=1}^{N_K} \int_{K_k}  \pi_h[\, \|\strhLa^{n}\|^4\,] 
\leq \sum_{k=1}^{N_K} |K_k| \,\|\strhLa^{n}\|_{L^\infty(K_k)}^4
\nonumber \\
&\leq C\,\sum_{k=1}^{N_K} |K_k| \,\left(|K_k|^{-1}\,\|\strhLa^{n}\|_{L^1(K_k)}\right)^4
\leq C\,\sum_{k=1}^{N_K} \|\strhLa^{n}\|_{L^4(K_k)}^4
\nonumber \\
&= C\,\|\strhLa^{n}\|_{L^4(\D)}^4
\leq C\,\|\strhLa^{n}\|_{L^2(\D)}^2\,\|\strhLa^{n}\|_{H^1(\D)}^2\,.
\label{sigmaL4} 
\end{align}
Similarly, it follows from the $\delta$-independent versions of (\ref{Lambdampdef}), (\ref{hLambdajdef},b),
recall Remark \ref{SPD}, (\ref{Breg}), (\ref{inverse}) and (\ref{GN}) that
for all $\zeta >0$ 
\begin{align}
\nonumber 
\|\Lambda_{m,p}(\strhLa^{n})\|_{L^{2+\zeta}(\D)}^{2+\zeta}
&\leq \sum_{k=1}^{N_K} |K_k|\, \|\Lambda_{m,p}(\strhLa^{n})\|_{L^\infty(K_k)}^{2+\zeta}
\leq C\,\sum_{k=1}^{N_K} |K_k|\, \|\strhLa^{n}\|_{L^\infty(K_k)}^{2+\zeta}
\\
&= C\,\|\strhLa^{n}\|_{L^{2+\zeta}(\D)}^{2+\zeta}
\leq C(\zeta)\,\|\strhLa^{n}\|_{L^2(\D)}^2\,\|\strhLa^{n}\|_{H^1(\D)}^\zeta\,.
\label{LambdaL2}
\end{align}
In addition, we note from (\ref{GN}), (\ref{eq:poincare}) and (\ref{Fstab2newaL}) that
for all $\zeta>0$
\begin{align}
\|\buhLa^{n-1}\|_{L^{\frac{2(2+\zeta)}{\zeta}}(\D)}
\leq C(\zeta^{-1})\, \|\buhLa^{n-1}\|_{L^{2}(\D)}^{\frac{\zeta}{2+\zeta}}
\,\|\buhLa^{n-1}\|_{H^{1}(\D)}^{\frac{2}{2+\zeta}}
\leq C(\zeta^{-1})\,
\|\grad \buhLa^{n-1}\|_{L^{2}(\D)}^{\frac{2}{2+\zeta}}\,.
\label{uzeta}
\end{align}

Combining (\ref{sigstaba}), (\ref{sigmaL4}), (\ref{LambdaL2}) and (\ref{uzeta}), and
on noting (\ref{interpinf}) 
and that $\alpha \leq \frac{1}{2\Wi}$, 
yields on applying a Young's inequality that
for all $\zeta >0$
\begin{align}
\nonumber
&\intd \pi_h[ \,\|\strhLa^{n}\|^2+ \|\strhLa^{n}-
\strhLa^{n-1}\|^2\,] + \dt_n \alpha \intd \|\grad \strhLa^{n}\|^2
+ \frac{\dt_n}{2 \Wi} \intd \pi_h[ \,\|\strhLa^{n}\|^2\,]
\\
& \quad
\leq 
\intd \pi_h[ \,\|\strhLa^{n-1}\|^2\,]
+ C\,\dt_n 
\nonumber \\
& \qquad + C(\zeta^{-1})\,\dt_n \,\alpha^{-(1+\zeta)}
\,\left[ \|\grad \buhLa^{n}\|_{L^2(\D)}^2\,
+ \|\grad \buhLa^{n-1}\|_{L^2(\D)}^2 \right]
\,
\intd \pi_h[\,\|\strhLa^{n}\|^2\,]\,.
\label{sigstabb}
\end{align}
Hence, summing (\ref{sigstabb}) from $n=1,\dots,m$ for $m=1, \dots,N_T$
yields, 
for any $\zeta >0$ that
\begin{align}
\nonumber
&\intd \pi_h[ \,\|\strhLa^{m}\|^2\,]
+ \sum_{n=1}^m 
\dt_n 
\intd \left[
\alpha \|\grad \strhLa^{n}\|^2
+ \frac{1}{2 \Wi} \pi_h[ \,\|\strhLa^{n}\|^2\,]
\right]
\\
& \quad
+ \sum_{n=1}^m 
\intd
\pi_h[\,\|\strhLa^{n}-\strhLa^{n-1}\|^2\,] 
\nonumber
\\
& \quad \quad
\leq 
\intd \pi_h[\, \|\strhLa^{0}\|^2\,] + C
\nonumber \\
& \quad \quad \quad  + C(\zeta^{-1})\,\alpha^{-(1+\zeta)}
\,\sum_{n=1}^m
\dt_n \,
\left[ \sum_{k=n-1}^n \|\grad \buhLa^{k}\|_{L^2(\D)}^2
\right]
\,
\intd \pi_h[\,\|\strhLa^{n}\|^2\,]\,.
\label{sigstabc}  
\end{align}
Applying the discrete Gronwall inequality (\ref{DG}) to (\ref{sigstabc}), and noting
(\ref{Deltatqu}), (\ref{eqnorm}), (\ref{idatah}), (\ref{Fstab2newaL}), 
(\ref{Vinverse}) and that $\dt \leq C_\star(\zeta^{-1})
\,\alpha^{1+\zeta}\,h^2$, 
for a $\zeta >0$ where $C_\star(\zeta^{-1})$ is sufficiently small, 
yields the first three bounds in (\ref{Fstab3a}).

Similarly to (\ref{equndtb}), 
on choosing $\bw = {\mathcal R}_h \left[ {\mathcal S}
\left(\frac{\buhLa^{n}-\buhLa^{n-1}}{\dt_n}\right)\right] \in \Vhone$ in
the $L$-independent version of (\ref{eq:PLaha}) yields, 
on noting (\ref{Rh}), (\ref{Swnorm}), (\ref{Rhstab})
 and Sobolev embedding, that
\begin{align}
\nonumber
&\Re \left\|{\mathcal S} \left(\frac{\buhLa^{n}-\buhLa^{n-1}}{\Delta t_n}\right)
\right\|^2_{H^1(\D)}
=\Re\int_{\D}
\frac{\buhLa^{n}-\buhLa^{n-1}}{\Delta t_n} \cdot
{\mathcal R}_h \left[ {\mathcal S}
\left(\frac{\buhLa^{n}-\buhLa^{n-1}}{\Delta t_n}\right)\right]
\\
& \qquad \leq C \bigl[\|\strhLa^{n}\|^2_{L^2(\D)}
+\|\grad \buhLa^{n}\|^2_{L^2(\D)}
+\|\,\|\buhLa^{n-1}\|\,\|\buhLa^{n}\|\,\|^2_{L^2(\D)}
\nonumber\\
& \qquad \qquad
+ \|\,\|\buhLa^{n-1}\| \,\|\grad \buhLa^{n}\|\,\|_{L^{1+\theta}(\D)}^2
+ \|\f^n\|_{H^{-1}(\D)}^2 \bigr]
\label{equndtba}
\end{align}
for any $\theta>0$ as $d=2$.  
On taking the $\frac{2}{\vartheta}$
power of both sides of
(\ref{equndtba}),
multiplying by $\Delta t_n$,
summing from $n=1,\dots,N_T$
and noting the $L$-independent versions of (\ref{L4sob}) and (\ref{sob1})
with $\theta = (\vartheta - 2)/(6 - \vartheta)$,
(\ref{Deltatqu}), (\ref{fncont}),
(\ref{Fstab2newaL}), (\ref{idatah}), (\ref{interpinf}) and 
the first bound in (\ref{Fstab3a})
yields the last bound in (\ref{Fstab3a}).
\end{proof}

It follows from (\ref{Fstab2newaL}), (\ref{Fstab3a}), (\ref{idatah}), 
(\ref{interpinf}), (\ref{RQmore}) and
(\ref{timeconaL}--c)
 that
\begin{subequations}
\begin{align}
\nonumber
&\sup_{t \in (0,T)} \|\buhLa^{\dt(,\pm)}\|^2_{L^2(\D)}
+ 
\int_{0}^T 
\|\grad \buhLa^{\dt (,\pm)}\|^2_{L^2(\D)}
\,dt
\\ &\hspace{0.8in}
+\int_0^T \left[
\|[\strhLa^{\dt,+}]^{-1}\|_{L^1(\D)} 
+
\frac{\|\buhLa^{\dt,+}
-\buhLa^{\dt,-}\|^2_{L^2(\D)}}{\Delta(t)} \right]\, dt \leq C
\label{stab1ca}
\end{align}
and
\begin{align}
\nonumber
&\sup_{t \in (0,T)} \| \strhLa^{\dt(,\pm)}\|^2_{L^2(\D)}
+  \int_{0}^T 
\left[
\alpha
\|\grad \strhLa^{\dt(, \pm)}\|_{L^2(\D)}^2
+ 
\frac{\|\strhLa^{\dt,+}-\strhLa^{\dt,-}\|^2_{L^2(\D)}}{\Delta(t)}
\right] dt\\
&\hspace{0.8in}
+ \int_0^T \left\|{\mathcal S}\,\frac{\partial \buhLa^{\dt}}{\partial t}
\right\|_{H^1(\D)}^{\frac{4}{\vartheta}}
dt
\leq C(\alpha^{-1},T),
\label{stab2ca}
\end{align}
\end{subequations}
where $\vartheta \in (2,4)$.

We note that we have no control on
the time derivative of 
$\strhLa^{\Delta t}$ in (\ref{stab2ca}). This is because 
if we choose
$\bphi
= {\mathcal P}_h\left[{\mathcal E} \left( \frac{\strhLa^{n}-\strhLa^{n-1}}
{\Delta t_n}
\right) \right] \in \Shone$ in 
the $L$-independent 
version of (\ref{eq:PLahb}), the terms involving $\buhLa^m$, $m=n-1$ and $m=n$,
cannot now be controlled in the absence of the cut-off on $\strhLa^{n}$. 
We are now in a position to prove the following convergence result 
for (P$^{\Delta t}_{\alpha,h}$).
The key difference between the following theorem and Theorem \ref{convaL}
for (P$_{\alpha,h}^{L,\Delta t}$) is that no control on the time derivative
of $\strhLa^{\Delta t}$ in (\ref{stab2ca}) implies no strong convergence
for $\strhLa^{\Delta t(,\pm)}$.

\begin{theorem}\label{conva} 
Let all the assumptions of Theorem \ref{dstabthma} hold.
Then
there exists a subsequence of $\{(\buhLa^{\dt},\strhLa^{\dt})
\}_{h>0,\Delta t>0}$, 
and functions $\bua \in
L^{\infty}(0,$ $T;[L^2(\D))]^2)\cap L^{2}(0,T;\Uz) \cap
W^{1,\frac{4}{\vartheta}}(0,T;\Uz')$ and
$\stra \in L^{\infty}(0,T;[L^{2}(\D)]^{2 \times 2}_{SPD})
\cap L^{2}(0,T;[H^{1}(\D)]^{2 \times 2}_{SPD})$ 
such that, as $h,\,\Delta t \rightarrow 0_+$,
\begin{subequations}
\begin{alignat}{2}
\buhLa^{\Delta t (,\pm)} &\rightarrow \bua \qquad &&\mbox{weak* in }
L^{\infty}(0,T;[L^2(\D)]^2), \label{uwconL2a}\\
\buhLa^{\Delta t (,\pm)} &\rightarrow \bua \qquad &&\mbox{weakly in }
L^{2}(0,T;[H^1(\D)]^{2}), \label{uwconH1a}\\
{\mathcal S} \frac{\partial \buhLa^{\dt}}{\partial t} 
&\rightarrow {\mathcal S} \frac{\partial \bua}{\partial t}
 \qquad &&\mbox{weakly in }
L^{\frac{4}{\vartheta}}(0,T;\Uz), \label{utwconL2a}\\
\buhLa^{\Delta t (,\pm)} &\rightarrow \bua
\qquad &&\mbox{strongly in }
L^{2}(0,T;[L^{r}(\D)]^2), \label{usconL2a}
\end{alignat}
\end{subequations}
and
\begin{subequations}
\begin{alignat}{2}
\strhLa^{\Delta t (,\pm)} &\rightarrow
\stra
\quad &&\mbox{weak* in }
L^{\infty}(0,T;[L^2(\D)]^{2 \times 2}), \label{psiwconL2a}\\
\strhLa^{\Delta t(,\pm)}
&\rightarrow  \stra
\quad &&\mbox{weakly in }
L^{2}(0,T;[H^1(\D)]^{2 \times 2}), \label{psiwconH1xa}\\
\Lambda_{m,p}(\strhLa^{\Delta t (,\pm)}) &\rightarrow
\stra\,\delta_{mp}
\qquad &&\mbox{weakly in }
L^{2}(0,T;[L^{2}(\D)]^{2 \times 2}),
\nonumber \\
& &&\hspace{1.5in}\quad m,p =1,\,2,
 \label{XXxLinfa}
\end{alignat}
\end{subequations}
where $\vartheta \in (2,4)$ 
and $r \in [1,\infty)$. 

Furthermore, $(\bua,\stra)$ solve the following problem:

({\bf P}$_\alpha$) Find $\bua \in L^{\infty}(0,T;[L^{2}(\D)]^2)
\cap L^{2}(0,T;\Uz) \cap W^{1,\frac{4}{\vartheta}}(0,T;\Uz')$
and $\stra
\in L^{\infty}(0,T;[L^{2}(\D)]^{2 \times 2}_{SPD})\cap
L^{2}(0,T;[H^{1}(\D)]^{2 \times 2}_{SPD})$ 
such that 
\begin{subequations}
\begin{align}
\nonumber
& \displaystyle\int_{0}^{T} {\rm Re} \left\langle \frac{\partial \bua}{\partial t},
\bv \right\rangle_{\Uz}
dt 
\\
&\quad
+
 \int_{0}^T \intd \left[ 
(1-\e) \,\grad \bua: \grad \bv +
{\rm Re} \left[ (\bua \cdot \grad) \bua
\right]\,\cdot\,\bv \right]  dt
\nonumber
\\
&\qquad =
\int_0^T \langle \f , \bv \rangle_{H^1_0(\D)} \,dt
- \frac{\e}{\rm Wi} \int_{0}^{T} \intd \stra 
: \grad \bv \, dt  \qquad \forall \bv \in L^{\frac{4}{4-\vartheta}}(0,T;\Uz),
\label{weak1da}
\\
& -\int_{0}^T  
\intd
\stra
: \frac{\partial \bphi}{\partial t}
\, dt
- \intd \strs^0 : \bphi 
\nonumber
\\ 
&\quad
+ \int_{0}^T  \int_{\D} \left[
(\bua \cdot \grad) \stra : \bphi +
\alpha\,\grad \stra :: \grad \bphi \right] \,  dt
\nonumber 
\\
&\qquad = \int_{0}^T \int_{\D}
\left[2\,(\grad \bua)\,\stra - \frac{1}{\rm Wi}
(\stra-\I) \right] : \bphi \,  dt
\nonumber
\\
& \hspace{0.4in}
\qquad \forall
\bphi \in L^{2}(0,T;[H^1(\D)]^{2 \times 2}_S)
\cap W^{1,1}_0(-T,T;[L^2(\D)]^{2 \times 2}_S);
\label{weak2da}\\
\mbox{and}\quad
& \lim_{t \rightarrow 0_+} \intd (\bua(t,\xx)-\bu^0(\xx))\,\cdot\,\bv=0
\qquad \forall \bv \in {\rm H}\,.  
\label{weakia} 
\end{align}
\end{subequations}
\end{theorem}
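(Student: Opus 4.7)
The plan is to follow the blueprint of Theorem~\ref{convaL}, extracting a weakly convergent subsequence from the uniform bounds (\ref{stab1ca},b) and passing to the limit $h,\Delta t\to 0_+$ in (P$_{\alpha,h}^{\Delta t}$) to identify $(\bua,\stra)$ as a solution of (P$_\alpha$). Two structural differences from the cut-off case must be accommodated: the uniform bound (\ref{LmpLinf}) on $\Lambda_{m,p}$ is no longer available without $\BL$, and the bound on $\partial_t\strhLa^{\Delta t}$ in $L^r(0,T;[L^2(\D)]^{2\times 2}_S)$ for $r\geq 1$ is lost, since (\ref{stab2ca}) only yields $\sum_n\|\strhLa^n-\strhLa^{n-1}\|^2\leq C$ rather than the sum with each term divided by $\Delta t_n$. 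Consequently, the time derivative must be shifted onto the test function via discrete summation by parts, which is the reason (\ref{weak2da}) is stated in its integrated-by-parts form.

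The weak convergences (\ref{uwconL2a}--c) and (\ref{psiwconL2a},b) follow immediately from (\ref{stab1ca},b) and (\ref{timeconaL}--c); the strong convergence (\ref{usconL2a}) is obtained by applying the compactness result (\ref{compact1}) with $\mu_0=2$, $\mu_1=4/\vartheta$, $\mathcal{Y}_0=[H^1(\D)]^2$, $\mathcal{Y}_1=\Uz'$ (equipped with the norm $\|\mathcal{S}\cdot\|_{H^1(\D)}$), and $\mathcal{Y}=[L^r(\D)]^2$, together with an (\ref{upmr1}--b)-type estimate extending to the piecewise constant variants. The positive definiteness of $\stra$ follows from the $L^1$-bound on $[\strhLa^{\Delta t,+}]^{-1}$ in (\ref{stab1ca}) via the argument of (\ref{weakSPD}). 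The weak limit identification (\ref{XXxLinfa}) is obtained from the interpolation estimate (\ref{MXittxbd}) combined with the $L^2(H^1)$-bound on $\strhLa^{\Delta t,+}$ supplied by (\ref{stab2ca}), which gives $\|\Lambda_{m,p}(\strhLa^{\Delta t,+})-\strhLa^{\Delta t,+}\delta_{mp}\|_{L^2(\D_T)}^2\leq Ch^2\alpha^{-1}\to 0$.

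The main obstacle is the stretch term $-2\grad\buhLa^{\Delta t,+}:\pi_h[\bphi\,\strhLa^{\Delta t,+}]$ in the stress equation, a product of two sequences that converge only weakly in $L^2(0,T;[L^2(\D)]^{2\times 2})$. To overcome this I plan to bootstrap off the discrete stress equation itself to extract a uniform bound on $\partial_t\strhLa^{\Delta t}$ in $L^q(0,T;([W^{1,p}(\D)]^{2\times 2}_S)')$ for suitable $p>2$ and $q>1$, by testing (\ref{eqpsincon}) with $\pi_h\bchi$ for smooth $\bchi$ and controlling the resulting right-hand side via (\ref{LambdaL2}), (\ref{uzeta}), (\ref{vecint}), and the 2D embedding $W^{1,p}(\D)\hookrightarrow L^\infty(\D)$ for $p>2$; the time-step restriction $\Delta t\leq C_\star\alpha^{1+\zeta}h^2$ enters critically here to close the bounds. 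A further application of (\ref{compact1}) with $\mathcal{Y}_0=[H^1(\D)]^{2\times 2}_S$, $\mathcal{Y}=[L^r(\D)]^{2\times 2}_S$, $\mathcal{Y}_1=([W^{1,p}(\D)]^{2\times 2}_S)'$ then yields $\strhLa^{\Delta t(,\pm)}\to\stra$ strongly in $L^2(0,T;[L^r(\D)]^{2\times 2})$ for any $r<\infty$, whereupon the stretch term passes to the limit as strong $\times$ weak, and the convection term is handled analogously using (\ref{usconL2a}) and (\ref{XXxLinfa}). The momentum equation (\ref{weak1da}) and initial condition (\ref{weakia}) then follow verbatim as in the proof of Theorem~\ref{convaL}; the stress equation (\ref{weak2da}) is obtained by testing (\ref{eqpsincon}) with $\pi_h\bphi$ for smooth $\bphi$ vanishing at $t=T$, applying summation by parts in time so as to yield both the $-\int_0^T\intd\stra:\partial_t\bphi$ term and the initial-datum contribution $-\intd\strs^0:\bphi(0,\cdot)$ in the limit, and extending to the full class of test functions by a density argument using interpolation estimates as in (\ref{intparts}).
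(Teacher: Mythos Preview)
Your approach differs from the paper's in one key respect: the paper does \emph{not} establish strong convergence of $\strhLa^{\Delta t(,\pm)}$, and explicitly remarks (just before the statement of the theorem) that no control on the time derivative of $\strhLa^{\Delta t}$ is available in the $([H^1(\D)]^{2\times2}_S)'$-norm. Instead, the paper handles the critical stretch term by an \emph{integration by parts in space}, recorded as identity (\ref{intpartsb}):
\[
\intd \grad\buhLa^{\Delta t,+}:\pi_h[\strhLa^{\Delta t,+}\pi_h\bphi]
= \intd \grad\buhLa^{\Delta t,+}:(\pi_h-\I)[\strhLa^{\Delta t,+}\pi_h\bphi]
- \intd\Bigl\{ \bigl((\grad\pi_h\bphi)\buhLa^{\Delta t,+}\bigr):\strhLa^{\Delta t,+} + \buhLa^{\Delta t,+}\cdot\bigl((\pi_h\bphi)\,\mathrm{div}\,\strhLa^{\Delta t,+}\bigr)\Bigr\}.
\]
The first term on the right vanishes by (\ref{interp2}); in the remaining two terms the gradient now sits on $\pi_h\bphi$ (smooth, strongly convergent) or on $\strhLa^{\Delta t,+}$ (weakly convergent in $L^2(0,T;[L^2(\D)]^{2\times2})$ via (\ref{psiwconH1xa})), and is paired with $\buhLa^{\Delta t,+}$, which is \emph{strongly} convergent in $L^2(0,T;[L^r(\D)]^2)$ by (\ref{usconL2a}). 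Thus strong convergence of $\strhLa$ is never needed.

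Your alternative---bounding $\partial_t\strhLa^{\Delta t}$ in $L^q(0,T;([W^{1,p}(\D)]^{2\times2}_S)')$ for some $p>2$, then invoking (\ref{compact1}) to get strong $L^2(0,T;L^r)$ convergence of $\strhLa^{\Delta t}$---appears viable: testing (\ref{eq:PLahb}) with $\mathcal{P}_h\bchi$ (not $\pi_h\bchi$; you need (\ref{Ph}) to recover the genuine duality pairing), the stretch term is controlled via the $L^\infty$-stability of $\mathcal{P}_h$ (a consequence of (\ref{strh01def})) and the Sobolev embedding $W^{1,p}(\D)\hookrightarrow L^\infty(\D)$, while the convection term is handled by (\ref{LambdaL2}) and (\ref{uzeta}) together with the $H^1$-stability (\ref{Phstab}). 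This yields a bound in $L^2(0,T;([W^{1,p}]^{2\times2}_S)')$ depending only on $\alpha^{-1}$ and $T$; the paper's negative remark only concerns the $([H^1]^{2\times2}_S)'$-norm, which fails precisely because $H^1\not\hookrightarrow L^\infty$ in $d=2$. So your route gives \emph{more} (strong convergence of $\strhLa$) at the cost of additional technical lemmas not present in the paper. Note, however, that your claim that the time-step restriction $\Delta t\leq C_\star\alpha^{1+\zeta}h^2$ ``enters critically'' in this bootstrap is inaccurate: that restriction is already consumed in the proof of Theorem~\ref{dstabthma} to obtain (\ref{Fstab3a}) via the discrete Gronwall argument, and no further use of it is needed for the time-derivative bound.
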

\begin{proof}
The results (\ref{uwconL2a}--d) and (\ref{psiwconL2a},b) 
follow immediately from the bounds (\ref{stab1ca},b),
as in the proof of Theorem \ref{convaL}.
Similarly, the proof of positive definiteness of $\stra$ follows
as in Theorem \ref{convaL}; that is, (\ref{weakSPD}) and the weak convergence
(\ref{psiwconL2a}) is adequate for this. 
The result (\ref{XXxLinfa}) follows from (\ref{psiwconL2a}), (\ref{MXittxbd})
and (\ref{stab2ca}) and the
 fact that $\stra \in L^\infty(0,T;[L^2(\D)]^{2 \times 2}_{SPD})$.

It follows from (\ref{Vhconv}),
(\ref{stab1ca},b),
(\ref{uwconL2a}--d), 
(\ref{psiwconL2a}), 
(\ref{fnconv}), (\ref{Sw})
and (\ref{conv0c})
that we may pass to the limit, $h,\,\Delta t \rightarrow 0_+$, in
the $L$-independent version of (\ref{equncon}) to obtain that $(\bua,\stra)$  
satisfy (\ref{weak1da}).
It also follows from (\ref{proju0}), (\ref{Vhconv}), 
(\ref{utwconL2a},d) and as ${\rm V}$ is dense in ${\rm H}$ 
that $\bua(0,\cdot)=\bu^0(\cdot)$ in the
required sense; see (\ref{weakia}) and Lemma 1.4 on p179 in Temam.\cite{Temam}

It follows from (\ref{psiwconL2a}--c), (\ref{usconL2a}), (\ref{Echi}),
(\ref{stab1ca},b), (\ref{interp1},b), 
(\ref{eq:symmetric-tr}) and as $\bua \in L^2(0,T;{\rm V})$
that we may pass to the limit $h,\, \Delta t \rightarrow 0_+$
in 
the $L$-independent version of
(\ref{eqpsincon}) with $\bchi \ = \pi_h\, \bphi$
to obtain (\ref{weak2da}) for any
$\bphi \in C^\infty_0(-T,T;[C^\infty(\overline{\D})]^{2 \times 2}_S)$.
For example, in order to pass to the limit on the first and third terms in 
the $L$-independent version of
(\ref{eqpsincon}),
we note that
\begin{subequations}
\begin{align}
\nonumber
&\int_0^T \int_{\D} \pi_h \left[\left(\frac{\partial \strhLa^{\Delta t}}{\partial t}
+ \frac{1}{\Wi} \, \strhLa^{\dt,+}\right) : \pi_h\,\bphi \right]   dt 
\\
& \quad =
\int_0^T \int_{\D} 
\left[
\frac{1}{\Wi} \,\strhLa^{\dt,+} : \pi_h \,\bphi
- \strhLa^{\dt} : \pi_h \left[\frac{\partial \bphi}{\partial t} \right] 
\right] dt - \int_D \pi_h \left[
\strhLa^{\dt} : \pi_h \,\bphi \right](0,\cdot)
\nonumber
\\
& \hspace{0.5in} +
\int_0^T \int_{\D} (\pi_h-\I)
\left[
\frac{1}{\Wi} \,\strhLa^{\dt,+} : \pi_h \,\bphi
- \strhLa^{\dt} : \pi_h \left[\frac{\partial \bphi}{\partial t} \right] 
\right] dt\,,
\label{intpartsa}
 \\
&\intd  \grad \buhLa^{\Delta t,+} 
: \pi_h [\strhLa^{\dt,+}\,\pi_h \,\bphi]
\nonumber\\
& \quad = \intd \grad \buhLa^{\Delta t,+} 
: (\pi_h - \I)[\strhLa^{\dt,+}\,\pi_h \,\bphi]
\nonumber
\\
& \hspace{0.5in} 
-\intd  \left \{
\left((\grad \pi_h\,\bphi) \, \buhLa^{\Delta t,+}\right) : \strhLa^{\dt,+}
+\buhLa^{\Delta, +} \,\cdot\, \left((\pi_h\,\bphi)\,{\rm div}\, \strhLa^{\dt,+}\right) 
\right\}\,;
\label{intpartsb} 
\end{align}
\end{subequations}
where $((\grad \pi_h\,\bphi) \, \buhLa^{\Delta t,+})(t,\xx) \in \R^{2 \times 2}$
with $[(\grad \pi_h \,\bphi) \, \buhLa^{\Delta t,+}]_{ij} = \sum_{k=1}^2
\frac{\partial (\pi_h \bphi)_{ik}}{\partial \xx_j} \, [\buhLa^{\Delta t,+}]_k$. 
The desired result (\ref{weak2da}) then follows from noting that
$ C^\infty_0(-T,T;[C^\infty(\overline{\D})]^{2 \times 2}_S)$
is dense in
$W^{1,1}_0(0,T;$ $[H^1(\D)]^{2 \times 2}_S)$. 
\end{proof}

We have the analogue of Remark \ref{Lalphaindrem}. 

\begin{remark}\label{Lalphaindrema}
It follows from (\ref{stab1ca},b), (\ref{uwconL2a},b) and (\ref{psiwconL2a},b) that
\begin{subequations}
\begin{align}
\sup_{t \in (0,T)} \|\bua\|^2_{L^2(\D)}+
\int_{0}^T 
\|\grad \bua \|^2_{L^2(\D)}
dt
&\leq C\,,
\label{finalbda}\\
\sup_{t \in (0,T)} \|\stra\|^2_{L^2(\D)}+
\alpha \int_{0}^T 
\|\grad \stra \|^2_{L^2(\D)}
dt
&\leq C(\alpha^{-1},T)\,.
\label{finalbdas}
\end{align}
\end{subequations}
Hence, although we have introduced  diffusion 
with a positive coefficient $\alpha$
into the stress equation (\ref{weak2da}) compared to the standard Oldroyd-B model;
the bound (\ref{finalbda}) on the velocity $\bua$ 
is independent of the
parameter $\alpha$, where $(\bua,\stra)$ solves (P$_\alpha$),
(\ref{weak1da}--c).
\end{remark}

\section*{Acknowledgement} This work was initiated, whilst the authors were visiting 
the Beijing International Center for Mathematical Research. 
We would like to thank Professor Pingwen Zhang for his kind hospitality.

\end{document}